\definecolor{unbleu}{rgb}{0.03, 0.15, 0.4}
\theoremstyle{plain}
\newtheorem{theorem}{Theorem}[section]
\newtheorem*{theorem*}{Theorem}
\newtheorem{lemma}[theorem]{Lemma}
\newtheorem*{lemma*}{Lemma}
\newtheorem*{proposition*}{Proposition}
\newtheorem{corollary}[theorem]{Corollary}
\newtheorem*{corollary*}{Corollary}
\theoremstyle{definition}
\newtheorem{definition}[theorem]{Definition}
\newtheorem{example}[theorem]{Example}
\newtheorem{remark}[theorem]{Remark}
\newcommand{\func}[5]{#1\colon \begin{array}[t]{ >{\displaystyle}r >{{}}c<{{}}  >{\displaystyle}l } #2 &\to& #3 \\ #4 &\mapsto& #5 \end{array}}
\newcommand{\norm}[1]{\lVert #1 \rVert}
\newcommand{\modl}[1]{\lvert #1 \rvert}
\newcommand\given[1][]{\:#1\vert\:}
\DeclarePairedDelimiter{\ceil}{\lceil}{\rceil}
\DeclarePairedDelimiter{\floor}{\lfloor}{\rfloor}
\setlist[enumerate]{topsep=0pt}
\setlist[itemize]{topsep=0pt}
\numberwithin{equation}{section}
\DeclareSymbolFont{cyrillic}{T2A}{cmr}{m}{n}
\DeclareMathSymbol{\D}{\mathalpha}{cyrillic}{196}
\begin{document}

\title{Functional Limit Theorems for Dynamical Systems with Correlated Maximal Sets}

\author[R. Couto]{Raquel Couto}
\address{Raquel Couto\\ Centro de Matem\'{a}tica \& Faculdade de Ci\^encias da Universidade do Porto, Rua do Campo Alegre 687, 4169-007 Porto, Portugal}
\email{\href{mailto:up201106786@fc.up.pt}{up201106786@fc.up.pt}}
\urladdr{\url{https://rbscouto.github.io}}

\thanks{The author was supported by an FCT -- Funda\c{c}\~ao para a Ci\^encia e a Tecnologia scholarship with reference number PD/BD/150456/2019. The author was also partially supported by FCT projects PTDC/MAT-PUR/4048/2021 and 2022.07167.PTDC, with national funds, and by CMUP, which is financed by national funds through FCT, under the project with reference UIDB/00144/2020. The author acknowledges the institutions Faculdade de Ciências da Universidade do Porto and University of St Andrews where the work was carried out as part of her PhD studies. The author thanks her PhD supervisors Ana Cristina Freitas, Jorge Freitas and Mike Todd for the insightful discussions, careful reading and helpful suggestions.}

\date{\today}
\keywords{Point processes, piling process, clustering, correlated maxima.} 
\subjclass[2020]{37A50, 37B20, 60F17, 60G70, 60G55,  37A25}

\begin{abstract}
In order to obtain functional limit theorems for heavy tailed stationary processes arising from dynamical systems, one needs to understand the clustering patterns of the tail observations of the process. These patterns are well described by means of a structure called the pilling process introduced recently in the context of dynamical systems. So far, the pilling process has been computed only for observable functions maximised at a single repelling fixed point. Here, we study richer clustering behaviours by considering correlated maximal sets, in the sense that the observable is maximised in multiple points belonging to the same orbit, and we work out explicit expressions for the pilling process when the dynamics is piecewise linear and expanding ($1$-dimensional and $2$-dimensional).
\end{abstract}

\maketitle

\tableofcontents

\section{Introduction}\label{sec:intro}
Functional limit theorems for dynamical systems are statements about the convergence of
\begin{equation}\label{eq:func-lim}
S_n(t):=\sum_{i=0}^{\floor{nt}-1} \dfrac{1}{a_n}\mathbf{X}_i-tc_n, \ t \in [0,1],
\end{equation}
in a suitable space of functions, when the process $(\mathbf{X}_n)_{n \in \mathbb{N}_0}$ is dynamically defined by evaluating an observable function along the orbits of the dynamical system ($(a_n)_{n \in \mathbb{N}}$ and $(c_n)_{n \in \mathbb{N}}$ are appropriate scaling sequences). In \cite{tk10}, necessary and sufficient conditions are given for the convergence of (\ref{eq:func-lim}) to an $\alpha$-stable Lévy process, in the space of càdlàg functions on $[0,1]$, $D([0,1])$, equipped with Skorohod's $J_1$ topology (see also references in \cite{tk10} for other examples of work on the subject).

The theme of \cite[Theorem 2.4]{20enriched} is an enriched functional limit theorem for heavy tailed dynamical sums. Our aim is to illustrate its application for a class of examples previously studied in \cite{16correlated} and \cite{17correlated}.

Let $(\mathcal{X},\mathcal{B}_{\mathcal{X}},\mu,f)$ be a probability preserving (discrete) dynamical system which stands for $(\mathcal{X},\mathcal{B}_{\mathcal{X}},\mu)$ a probability space and $f:\mathcal{X} \to \mathcal{X}$ a transformation that preserves $\mu$. We take $\mathcal{X}$ a $d$-dimensional compact Riemannian manifold, with $\text{dist}(\cdot,\cdot)$ the distance induced by the Riemannian metric, then $\mathcal{B}_{\mathcal{X}}$ is the corresponding Borel $\sigma$-algebra, and $\mu$ is a probability measure that we further assume to be absolutely continuous with respect to Lebesgue measure (\textit{i.e.} \textit{acip}). Then, for an observable $\Psi:\mathcal{X} \to \mathbb{R}^d$, $\mathbf{X}_n:=\Psi \circ f^n$, $n \in \mathbb{N}_0$, defines a stationary stochastic process.

A heavy tailed random variable is known as one with an infinite second moment. In our setting, that is written $\mathbb{E}_{\mu}[\norm{\mathbf{X}_0}^2]=\infty$. Since a value which is far away from the mean dominates a Birkhoff sum, then the study of sums of heavy tailed random variables is tied to the study of maxima. Therefore, the study of sums such as in (\ref{eq:func-lim}), when $(\mathbf{X}_n)_{n \in \mathbb{N}_0}$ is heavy tailed, is suited for the well developed framework of Extreme Value Theory (for dynamical systems).

In the context of extremes for dynamical systems (as in, for example, \cite{10hit,12index,torus15,16correlated} and \cite{hyp21}) $\Psi$ is chosen such that there exists a maximal set, $\mathcal{M}$, for $\norm{\Psi(\cdot)}$ whose high values correspond to entries in small neighbourhoods of $\mathcal{M}$. It is only demanded that $\mathcal{M}$ has zero measure, so $\mathcal{M}$ consisting of a finite set of points, a countable set of points, a submanifold or a fractal set are all permitted. Here, for $\mathcal{I}$ a finite or countable set of indices, let $\Psi$ be defined as
\begin{equation}\label{eq:psi}
\Psi(x):=\displaystyle\sum_{i \in \mathcal{I}} h_i(\text{dist}(x,\xi_i))\dfrac{\Phi^{-1}_{\xi_i}(x)}{\norm{\Phi^{-1}_{\xi_i}(x)}}\mathbf{1}_{W_i}(x)
\end{equation}
in the union of some neighbourhoods of each of the $\xi_i$, $i \in \mathcal{I}$, and equal to zero outside of it, where\begin{itemize}
\item $h_i:[0,+\infty) \to \mathbb{R} \cup \{+\infty\}$ satisfies\begin{enumerate}[(i)]
\item $0$ is a global maximum ($h_i(0)=+\infty$ is allowed);
\item $h_i$ is a strictly decreasing bijection in a neighbourhood of $0$;
\item $h_i$ is of type $g_2$ (see, for instance, section 2.1 of \cite{20enriched}) but, more specifically, there exists $\alpha \in (0,2)$ such that, for all $i \in \mathcal{I}$,
\begin{equation}\label{eq:hi}
h_i(x)=c_ix^{-\frac{1}{\alpha}}
\end{equation}
where $c_i>0$.
\end{enumerate}
\item $\Phi_{\xi_i}:V_i \to W_i$ denotes a diffeomorphism defined on an open ball, $V_i$, around $\xi_i$ in $T_{\xi_i}\mathcal{X}$ (the tangent space at $\xi_i$) onto a neighbourhood, $W_i$, of $\xi_i$ in $\mathcal{X}$ such that $\Phi_{\xi_i}(E^{s,u} \cap V_i)=W_{\xi_i}^{s,u} \cap W_i$.
\end{itemize}
The presence of $\dfrac{\Phi^{-1}_{\xi_i}(x)}{\norm{\Phi^{-1}_{\xi_i}(x)}}$ in the definition of $\Psi$ allows for labelling the observation $\Psi(x)$ with a direction, corresponding to the projection of $x$ in $T_{\xi_i}\mathcal{X}$. In particular, when $\mathcal{X}$ is a subset of $\mathbb{R}$ then $\Psi$ reduces to $\psi(x):=\displaystyle\sum_{i \in \mathcal{I}} h_i(\modl{x-\xi_i})\mathbf{1}_{V_i}(x)$ where $V_i$ is an open interval around $\xi_i$.

We remark that, in general, $h_i$ is assumed to have one of three types of behaviour ($g_1$, $g_2$ or $g_3$), which correspond to the three classical EVL (see \cite{10hit}), but since here we are restricting to the context of \cite[Theorem 2.4]{20enriched} only $g_2$ needs to be considered. In fact, we are even restricting within type $g_2$ as we are taking $h_i$ as in (\ref{eq:hi}).

Point processes have proven to be a very useful tool in the study of extremes, originally recording the times of extreme observations in a prescribed time frame (\cite{hv09,10hit,13compound}) and later both times and magnitudes of observations (\cite{20conv}). In both cases, it was possible to deal with clustering of extremes (\textit{i.e.} the appearance of several extreme observations close together in the time line) which has been related to recurrence properties of the underlying dynamical system, such as periodicity (\cite{12index}). We note that clustering can be seen as a short range dependence of the sequence $(\mathbf{X}_n)_{n \in \mathbb{N}_0}$.

In fact, in the presence of clustering there is no limit to (\ref{eq:func-lim}) in Skorohod's $J_1$ topology. We loosely explain why (see \cite{20enriched} for more details). Several extreme observations showing up close together in time lead up to several jumps of the partial sum process (\textit{i.e.} $S_n(t)$) in a small time interval. Then, due to time compression, the latter end up as a sequence of points on the same time instant which is not $J_1$-equivalent to any element of $D([0,1])$. Opting for a weaker topology, such as Skorohod's $M_1$ or $M_2$ topologies, although possibly allowing for limits, implies the loss of the clustering patterns. In other words, once the limit is established, it is impossible to retrieve the data corresponding to the intermediate jumps that took place inside the cluster. For a discussion of $J_1$, $M_1$ and $M_2$ topologies see \cite{whitt}.

The distinctive feature of \cite[Theorem 2.4]{20enriched} is allowing for the functional convergence to hold without loss of the clustering patterns. A key role is played by the functional space $F'$ defined in \cite{20enriched} related to Whitt's space $F$ (\cite{whitt}). Very briefly, $F'$ is made of excursion triples $(V,S^V,(e_V^{s})_{s \in  S^V})$, where $V \in D([0,1])$, $S^V$ is an at most countable set containing the discontinuities of $V$, and $e_V^s$ is the excursion at $s \in S^V$, which lives in a quotient space of $D([0,1])$, and enjoys the essential property that $e_V^s(0)=V(s^-)$ and $e_V^s(1)=V(s)$, meaning that the information regarding the cluster associated to a discontinuity $s$ of the càdlàg function $V$ is recorded in $e_V^s$. Indeed, \cite[Theorem 2.4]{20enriched} is a statement about convergence in $F'$ of heavy tailed dynamical sums, where (\ref{eq:func-lim}) converges to $V$ an $\alpha$-stable Lévy process.

\cite[Theorem 2.4]{20enriched} essentially follows from weak convergence of the cluster point process. We highlight that the cluster point process in \cite{20enriched} differs from previous versions (such as the one presented in \cite{20conv}) by characterising clustering via a piling process: a bi-infinite sequence storing the comparisons of the magnitudes of exceedances in a cluster to the magnitude of an exceedance registered at time $0$ and conditional on the occurrence of such exceedance. Once the piling process is well defined and usual dependence requirements are met by the stochastic sequence, the cluster point process converges to a Poisson point process. Then, if the stochastic sequence has an $\alpha$-regularly varying tail, the functional limit in $F'$ is $(V,S^V,(e_V^{s})_{s \in S^V})$ where $V$ is an $\alpha$-stable Lévy process (whose Lévy-measure is associated to the intensity measure of the Poisson point process - see section 2.5 of \cite{20enriched}). The piling process plays a crucial role in \cite[Theorem 2.4]{20enriched} allowing not only for the characterisation of the exceedances in a given cluster (\textit{i.e.} handling the clustering component on the cluster point process) but also the excursions $(e_V^{s})_{s \in S^V}$ (\textit{i.e.} recording the clustering pattern aside of the functional limit).

We remark that the employment of \cite[Theorem 2.4]{20enriched} when $\mathcal{M}=\{\zeta\}$, where $\zeta$ is a periodic point, is included in \cite{20enriched}. In the context of \cite{16correlated} and \cite{17correlated}, clustering of extreme observations is a consequence of the dynamical link between the elements of the maximal set, rather than being created by periodicity. Specifically, in what we further may refer to as `the setting of multiple correlated maxima', we will be dealing with $\mathcal{M}$ (with more than one element) a finite or countable set whose elements are all in the same orbit, so, alternatively,\begin{enumerate}[(A)]
\item\label{itm:A} $\mathcal{M}=\{\xi_1,\dots,\xi_N\}$ such that there exist $m_1,\dots,m_N$ with $\xi_i=f^{m_i}(\zeta)$, where $\zeta \in \mathcal{X}$, which is compatible with $\mathcal{I}=\{1,\dots,N\}$ ($\mathcal{I}$ as in the definition of $\Psi$ by (\ref{eq:psi}));
\item\label{itm:B} $\mathcal{M}=\{\xi_1,\xi_2,\dots\}=\{\xi_i\}_{i \in \mathbb{N}}$ such that there exist $m_i$, $i \in \mathbb{N}$, with $\xi_i=f^{m_i}(\zeta)$, where $\zeta \in \mathcal{X}$, and $\xi_0=\lim_{i \to \infty} \xi_i$, which is compatible with $\mathcal{I}=\{i: i \in \mathbb{N}\}$;
\end{enumerate}
so that (A) and (B) correspond to the settings of \cite{16correlated} and \cite{17correlated}, respectively. In either case, we further assume that $m_1=0$ (\textit{i.e.} $\xi_1=\zeta$) and require the following for $f$ and $\mu$:\begin{enumerate}[(R1)]
\item\label{itm:R1} along the orbit of $\zeta$, $f$ is $C^1$ and locally invertible;
\item\label{itm:R3} $\mu$ is an \textit{acip} with density $\rho$ satisfying
\begin{equation*}
\lim_{\varepsilon \to 0}\dfrac{\mu(B_{\varepsilon}(x))}{\text{Leb}(B_{\varepsilon}(x))}=\rho(x)
\end{equation*}
which is finite on $\mathcal{M}$; for all $\xi_i  \in \mathcal{M}$ let $D_i \equiv \rho(\xi_i)$.
\end{enumerate}

We are finally able to give a sample of the results in this paper.

The following result is a particular case of Theorem \ref{thm:piling-corr-nper} and appears later as Example \ref{ex:nper-evendist}. It is a case of a finite maximal set as in \ref{itm:A} with $\zeta$ non-periodic. We give analogous results in case \ref{itm:B}.
\begin{theorem}\label{prop:intro}
Let $f(x)=2x \mod 1$, $x \in [0,1]$, and $\mu=Leb$ (invariant for $f$). Take $\zeta=\dfrac{\sqrt{2}}{16}$ (non-periodic), and define the observable $\psi$ as
\begin{equation*}
\psi(x):=\begin{cases}
\modl{x-\zeta}^{-2}, \ x \in B_{\varepsilon_1}(\zeta)\\
\modl{x-f(\zeta)}^{-2}, \ x \in B_{\varepsilon_2}(f(\zeta))\\
\modl{x-f^3(\zeta)}^{-2}, \ x \in B_{\varepsilon_3}(f^3(\zeta))\\
0, \ \text{otherwise}
\end{cases}
\end{equation*}
for some $\varepsilon_1,\varepsilon_2,\varepsilon_3>0$. Observe that, presented as in (\ref{eq:psi}),
\begin{equation*}
\psi(x)=\displaystyle\sum_{i=1}^{3} h_i(\modl{x-\xi_i})\mathbf{1}_{B_{\varepsilon_i}(\xi_i)}(x)
\end{equation*}
for $\xi_1=\zeta$, $\xi_2=f(\zeta)$, $\xi_3=f^3(\zeta)$, and $h_i(t)=t^{-2}$ for $i=1,2,3$ (so that $\alpha=1/2$). In particular, $\mathcal{M}=\{\zeta,f(\zeta),f^3(\zeta)\}$. Then, the piling process is any of the bi-infinite sequences
$$\left(\dots,\infty,U,U.2,\infty,U.2^3,\infty,\dots\right)$$
$$\left(\dots,\infty,U,\infty,U.2^2,\infty,\dots\right)$$
$$\left(\dots,\infty,U,\infty,\dots\right)$$
with probability $\dfrac{1}{3}$, where the entry that is equal to $U$ corresponds to index $0$ and the entries that are visibly different from $\infty$ are the only such entries, for $U$ a random variable uniformly distributed on $[0,1]$.
\end{theorem}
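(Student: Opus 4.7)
The plan is to recognise the statement as a direct specialisation of Theorem \ref{thm:piling-corr-nper} (case (\ref{itm:A}), non-periodic $\zeta$) to the doubling map, so the work reduces to reading off the correct parameters and checking the scaling.

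First, I would verify the hypotheses in this concrete setting. Since $\zeta = \sqrt{2}/16$ is irrational, $\zeta$ is non-periodic for $f(x) = 2x \bmod 1$ and its orbit avoids $\{1/2\}$; hence $f$ is $C^1$ and locally invertible at every $f^n(\zeta)$, giving (R\ref{itm:R1}). Lebesgue measure is $f$-invariant and absolutely continuous with constant density $\rho \equiv 1$, so (R\ref{itm:R3}) holds with $D_1 = D_2 = D_3 = 1$. Comparing the definition of $\psi$ with (\ref{eq:psi}) one reads off $\mathcal{I} = \{1,2,3\}$, $(m_1, m_2, m_3) = (0, 1, 3)$, $h_i(t) = t^{-1/\alpha}$ with $\alpha = 1/2$, and $c_i = 1$ for every $i$.

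Next, I would extract the data that determine the piling. For each $i \in \mathcal{I}$, the set of forward hits $J_i := \{m_j - m_i : m_j > m_i\}$ indexes the non-$\infty$ entries of the piling (other than index $0$) when the tagged exceedance at time $0$ lies near $\xi_i$; here $J_1 = \{1, 3\}$, $J_2 = \{2\}$, $J_3 = \emptyset$, and every $n \in \mathbb{Z} \setminus (\{0\} \cup J_i)$ gives $Y_n = \infty$ by Theorem \ref{thm:piling-corr-nper} (in particular, all $n < 0$ do, because $\zeta$ is non-periodic so no past preimage is forced into $\mathcal{M}$). The piling values at the forward hits then come from the derivative cocycle: since $|(f^n)'| \equiv 2^n$ and all $c_i$ and $D_i$ are equal to $1$, the general formula collapses and local linearisation yields $|X_n - \xi_j| = 2^n |X_0 - \xi_i|$ whenever $n \in J_i$, hence $Y_n = 2^{m_j - m_i} U$, where $U = |X_0 - \xi_i|/r(u) \in (0, 1]$ is the conditionally uniform random variable arising from the $\alpha$-regularly varying tail (with radius $r(u) = u^{-1/2}$).

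Substituting $i = 1, 2, 3$ reproduces exactly the three bi-infinite sequences in the statement, and Theorem \ref{thm:piling-corr-nper} assigns each the weight $c_i^\alpha D_i / \sum_k c_k^\alpha D_k = 1/3$. The main technical point is the dictionary between the abstract $Y_n$ of the general theorem and the explicit entries $2^{m_j - m_i} U$ above: this hinges on the cancellation between the exponent $1/\alpha = 2$ in $h_i$ and the $\alpha$-th power in the tail normalisation, which works cleanly here precisely because the $c_i$ and $D_i$ are constant in $i$. Once this dictionary is in place the three sequences and their probabilities drop out with no further computation.
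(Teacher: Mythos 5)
Your proposal follows the same route as the paper: the statement is exactly Example \ref{ex:nper-evendist}, obtained by specialising Theorem \ref{thm:piling-corr-nper} after reading off $D_i=1$, $c_i=1$, $d=1$, $\alpha=1/2$, $(m_1,m_2,m_3)=(0,1,3)$, which gives $p_i=1/3$ and the forward entries $U\cdot 2^{m_l-m_i}$; your parameter extraction and the resulting three sequences are all correct.

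One justification is wrong, however, even though the conclusion it supports is right. You claim that all negative indices give $\infty$ ``because $\zeta$ is non-periodic so no past preimage is forced into $\mathcal{M}$.'' Non-periodicity is the paper's argument for the \emph{positive} indices outside $\{m_l-m_i\}$; it does not dispose of the negative ones. For $i=2$ or $i=3$ the unconditioned process $(Y_j)$ does, with positive probability, have finite entries at $j=m_l-m_i<0$ (a hit near $\xi_3$ at time $0$ may well have been preceded by hits near $\xi_2$ at time $-2$ and $\xi_1$ at time $-3$). What forces those entries to $\infty$ in the piling process is the conditioning $\inf_{j\le -1}\norm{Y_j}\ge 1$ of Definition \ref{def:piling}, combined with the verification that $A^{(i)}=\emptyset$: here $u_{i,l}^{\min}=(c_l/c_i)^{\alpha}/\lambda_{i,l}^{\min}=2^{\,m_i-m_l}>1$ because $f$ is expanding and all $c_i$ coincide (Remark \ref{rmk:equal-cs}), so any finite negative-index entry has norm $U\cdot 2^{m_l-m_i}<1$ a.s.\ and is excluded by the conditioning. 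You never check $A^{(i)}=\emptyset$, yet this is precisely the hypothesis that places you in case (0) of Theorem \ref{thm:piling-corr-nper} with the full weight $p_i=1/3$ on a single sequence rather than splitting it across cases (I)--(IV); the check is a one-line computation, but it cannot be replaced by the non-periodicity argument you give.
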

The enriched functional limit theorem follows from the application of \cite[Theorem 2.4]{20enriched} for a piling process given by Theorem \ref{prop:intro}.
\begin{theorem}\label{thm:intro}
Under the hypotheses of Theorem \ref{prop:intro},
\begin{equation*}
S_n(t):=\dfrac{1}{36n^2}\sum_{i=0}^{\floor{nt}-1} X_i, \ t \in [0,1]
\end{equation*}
converges in $F'$ to $(V,\text{disc}(V),(e_V^s)_{s \in \text{disc}(V)})$, where $V$ is an $\alpha$-stable Lévy process ($\alpha=1/2$)
\begin{equation*}
V(t)=\sum_{T_i \leq t}\sum_{j \in \mathbb{Z}} U_i^{-\frac{1}{\alpha}}\mathcal{Q}_{i,j}, \ t \in [0,1],
\end{equation*}
where $T_i$ is uniformly distributed on $\mathbb{R}_0^{+}$, $U_i$ is uniformly distributed on $[0,1]$ and $\mathcal{Q}_i=\xi(\mathbf{\tilde{Q}}_i)$ consists of the equally likely sequences
$$\left(\dots,0,1,1/4,0,1/4^3,0,\dots\right)$$
$$\left(\dots,0,1,0,1/4^2,0,\dots\right)$$
$$\left(\dots,0,1,0,\dots\right)$$
and the excursions are given by $e_V^{T_i}(t)=V(T_i^{-})+U_i^{-2}\displaystyle\sum_{0 \leq j \leq \floor{\tan(\pi(t-\frac{1}{2}))}} \mathcal{Q}_{i,j}$.
\end{theorem}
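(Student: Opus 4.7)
The plan is to apply \cite[Theorem 2.4]{20enriched} directly. Since the piling process has already been supplied by Theorem \ref{prop:intro}, the work here splits into three tasks: (a) identify the tail behaviour of $X_0=\psi\circ f^0$ in order to pin down the scaling $a_n$; (b) verify the mixing hypotheses underlying \cite[Theorem 2.4]{20enriched} for the doubling map with Lebesgue measure; (c) translate the piling data of Theorem \ref{prop:intro} into the explicit form of the Lévy process $V$ and the excursions $e_V^{T_i}$.

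For (a), I will note that since $\xi_1,\xi_2,\xi_3$ are distinct, for $u$ large enough the sets $\{\psi>u\}\cap B_{\varepsilon_i}(\xi_i)=(\xi_i-u^{-1/2},\xi_i+u^{-1/2})$ are pairwise disjoint, so $\mu(\psi>u)=6u^{-1/2}$, which is regularly varying of index $\alpha=1/2$. Imposing the usual normalisation $n\mu(X_0>a_n)=1$ then forces $a_n=36n^2$, matching the coefficient in the statement; since $\alpha<1$ no centering is required and $c_n=0$. For (b), the doubling map is uniformly expanding and Lebesgue measure is its absolutely continuous invariant measure, so the decay-of-correlations and finite-block dependence conditions invoked in \cite{20enriched} (variants of Leadbetter's $\Delta$ and $\Delta'$ adapted to the enriched framework) follow from essentially textbook arguments.

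With (a) and (b) in place, \cite[Theorem 2.4]{20enriched} yields convergence of $S_n(t)$ in $F'$ to $(V,\text{disc}(V),(e_V^s)_{s\in\text{disc}(V)})$, where $V$ is built from a Poisson point process whose cluster marks come from the piling. The translation in (c) proceeds by writing each finite piling entry as $U\cdot r_k$, in which case the corresponding $\mathcal{Q}$-entry is $r_k^{-1/\alpha}$; the three piling sequences of Theorem \ref{prop:intro} carry multipliers $(1,2,2^3)$, $(1,2^2)$ and $(1)$, yielding the stated $\mathcal{Q}$ sequences $(\dots,0,1,1/4,0,1/4^3,0,\dots)$, $(\dots,0,1,0,1/4^2,0,\dots)$ and $(\dots,0,1,0,\dots)$. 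Finally, the excursion formula specialises the general $F'$-excursion construction of \cite{20enriched}: the bijection $t\mapsto\lfloor\tan(\pi(t-1/2))\rfloor$ from $(0,1)$ onto $\mathbb{Z}$ parametrises the $\mathbb{Z}$-indexed cluster positions, so the partial sums of $\mathcal{Q}_{i,j}$ (rescaled by $U_i^{-2}$) trace out the càdlàg excursion at $T_i$ with the required endpoints. I expect the only subtle point to be the careful verification of the enriched dependence conditions of \cite{20enriched}, but given the uniform hyperbolicity of $f$ this should not present serious difficulty.
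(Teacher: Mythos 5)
Your proposal is correct and follows essentially the same route as the paper: the paper likewise obtains the result by feeding the piling process of Theorem \ref{prop:intro} into \cite[Theorem 2.4]{20enriched}, computing $a_n=36n^2$ from $\mu(\psi>u)=6u^{-1/2}$ in Example \ref{ex:nper-evendist}, verifying $\D_{q_n}$ and $\D'_{q_n}$ via decay of correlations of $BV$ against $L^1$ for the doubling map (Appendix \ref{appx:dependance}, using the uniform bound on the $BV$-norms of the relevant indicators since $\mathcal{M}$ is finite), and reading off $\mathcal{Q}_i=\xi(\mathbf{\tilde{Q}}_i)$ and the excursions exactly as you describe.
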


We structure the document as follows: in Section \ref{sec:corr-setting} we summarise the general theory from \cite{20enriched} that we wish to apply to the setting of multiple correlated maxima and that ultimately leads to the enriched functional limits; in Sections \ref{sec:corr-finite} and \ref{sec:corr-count} we present general statements for the piling processes in the setting of multiple correlated maxima and fully work out some representative examples; finally, in Appendix \ref{appx:dependance} (and \ref{appx:dependance-ex}) we include the dependence requirements.

\section{Background theory}\label{sec:corr-setting}
In this section we summarise the background theory from \cite{20enriched} which we then apply to the setting of multiple correlated maxima in Sections \ref{sec:corr-finite} and \ref{sec:corr-count} to follow. Our main purpose is the application of \cite[Theorem 2.4]{20enriched} (Theorem \ref{thm:func-lim-FFT20} below) to the scenarios given by \ref{itm:A} and \ref{itm:B} in Section \ref{sec:intro}. \cite[Theorem 2.4]{20enriched} follows from the complete convergence of the cluster point process, defined in Section 3.4 of \cite{20enriched} and recalled in Section \ref{subsec:cluster-pp} below, to a Poisson point process (Theorem \ref{thm:pp-conv-FFT20} below - for more details see Sections 3.4 and 4.1 of \cite{20enriched}). The piling process, introduced in Section 3.3 of \cite{20enriched} and recalled in Section \ref{subsec:piling} below, plays a crucial role in describing the clustering patterns and it characterises the limiting Poisson point process (to the cluster point process) as well as the enriched functional limit given by \cite[Theorem 2.4]{20enriched}.

We will always be dealing with a dynamically defined (stationary) process $\mathbf{X}_0,\mathbf{X}_1,\dots$ where $\mathbf{X}_n:=\Psi \circ f^n$, $n \in \mathbb{N}_0$, for a probability preserving $f:\mathcal{X} \to \mathcal{X}$ and a vector valued observable $\Psi$ as described in Section \ref{sec:intro}.

\subsection{Threshold functions}\label{subsec:thre}
Let $(u_n)_{n \in \mathbb{N}}:\mathbb{R}^{+} \to \mathbb{R}^{+}$, where $\mathbb{R}^{+}=(0,+\infty)$, be such that:\begin{enumerate}[(1)]
\item for each $n$, $u_n$ is non-increasing, left continuous and such that
\begin{equation*}
\lim_{\tau_1 \to 0,\tau_2 \to \infty} \mathbb{P}(u_n(\tau_2)<\norm{\mathbf{X}_0}<u_n(\tau_1))=1;
\end{equation*}
\item for each $\tau \in \mathbb{R}^{+}$,
\begin{equation}\label{eq:u_n-tau}
\lim_{n \to \infty} n\mathbb{P}(\norm{\mathbf{X}_0}>u_n(\tau))=\tau.
\end{equation}
\end{enumerate}
$(u_n)_{n \in \mathbb{N}}$ is a normalising sequence of threshold functions with the crucial property expressed in (\ref{eq:u_n-tau}) that the asymptotic frequency of exceedances of a threshold (that depends on $\tau$) is constant (and equal to $\tau$). The use of such normalising sequences is standard even in the classical probabilistic setting of Extreme Value Theory.

Taking generalised inverses of the functions $u_n$, we are able to recover an asymptotic frequency: for every $z \in \mathbb{R}^{+}$, define
\begin{equation}\label{eq:gen-u_n^{-1}}
u_n^{-1}(z)=\sup\{\tau>0: z \leq u_n(\tau)\}.
\end{equation}
Observe that (\ref{eq:gen-u_n^{-1}}) gives $z>u_n(\tau) \implies u_n^{-1}(z) \leq \tau$.

\subsection{Extremal Index}\label{subsec:EI}
The extremal index is a parameter $\vartheta \in [0,1]$ which quantifies the intensity of clustering of extreme observations, that is how close in the time line arbitrarily high observations appear. $\vartheta$ equal to $1$ means absence of clustering of extreme observations which becomes more intense as $\vartheta$ gets closer to $0$.

For a sequence $(q_n)_{n \in \mathbb{N}}$ of positive integers (we discuss the importance of this sequence in Appendix \ref{appx:dependance}), let
\begin{equation}\label{eq:U_n}
U_n(\tau):=\{\norm{X_0}>u_n(\tau)\}
\end{equation}
and
\begin{equation}\label{eq:U_n-q_n}
U_n^{(q_n)}(\tau):=\{\norm{X_0}>u_n(\tau),\norm{X_1} \leq u_n(\tau),\dots,\norm{X_{q_n}} \leq u_n(\tau)\}
\end{equation}
where $(u_n)_{n \in \mathbb{N}}$ is as defined in Section \ref{subsec:thre}.
Then, the extremal index, $\vartheta$, is defined as
\begin{equation}\label{eq:EI}
\vartheta:=\lim_{n \to \infty} \dfrac{\mu(U_n^{(q_n)}(\tau))}{\mu(U_n(\tau))}
\end{equation}
provided the limit exists.

\subsection{Piling process}\label{subsec:piling}
For $(\mathbf{X}_n)_{n \in \mathbb{N}_0}$, for all $s<t$, let $\mathbb{X}_n^{s,t}$ denote the vector
\begin{equation*}
\left(u_n^{-1}(\norm{\mathbf{X}_{s}})\dfrac{\mathbf{X}_{s}}{\norm{\mathbf{X}_{s}}},\dots,u_n^{-1}(\norm{\mathbf{X}_{t}})\dfrac{\mathbf{X}_{t}}{\norm{\mathbf{X}_{t}}}\right)
\end{equation*}
so that, for all $j=s,\dots,t$, the unit vector in the direction of $\mathbf{X}_j$ is scaled by the asymptotic frequency $u_n^{-1}(\norm{\mathbf{X}_j})$.

The piling process is defined on the space
\begin{equation*}
l_{\infty}=\{\mathbf{x}=(x_j)_{j \in \mathbb{Z}} \in (\overline{\mathbb{R}^d}\setminus\{0\})^{\mathbb{Z}}: \lim_{\modl{j} \to \infty} \norm{x_j}=\infty\},
\end{equation*}
being, therefore, a bi-infinite sequence.
\begin{definition}\label{def:piling}
Given a process $(Y_j)_{j \in \mathbb{Z}}$ such that:\begin{enumerate}[(1)]
\item $\mathcal{L}\left(\dfrac{1}{\tau} \mathbb{X}_n^{r_n+s,r_n+t} \given[\Big] \norm{\mathbf{X}_{r_n}} > u_n(\tau)\right) \xrightarrow[n \to \infty]{} \mathcal{L}((Y_j)_{j=s,\dots,t})$, for all $s<t \in \mathbb{Z}$ and all $\tau>0$;
\item the process $(\Theta_j)_{j \in \mathbb{Z}}$ given by $\Theta_j=\dfrac{Y_j}{\norm{Y_0}}$ is independent of $\norm{Y_0}$;
\item $\lim_{\modl{j} \to \infty} \norm{Y_j}=\infty$ a.s.;
\item $\mathbb{P}\left(\inf_{j \leq -1} \norm{Y_j} \geq 1\right)>0$;
\end{enumerate}
the process $(Z_j)_{j \in \mathbb{Z}}$ defined by
\begin{equation*}
\mathcal{L}((Z_j)_{j \in \mathbb{Z}})=\mathcal{L}\left((Y_j)_{j \in \mathbb{Z}} \given[\Big] \inf_{j \leq -1} \norm{Y_j} \geq 1\right)
\end{equation*}
is called the \textit{piling process}.
\end{definition}
In words, (1) states that $(Y_j)_{j \in \mathbb{Z}}$ must be such that, for all $s<t \in \mathbb{Z}$ and all $\tau>0$, $(Y_s,\dots,Y_t)$ has asymptotically the same distribution as $\left(\dfrac{1}{\tau} \mathbb{X}_n^{r_n+s,r_n+t} \given[\Big] \norm{\mathbf{X}_{r_n}} > u_n(\tau)\right)$ which is, for $j=s,\dots,t$, the joint distribution of the unit vectors in the directions of $\mathbf{X}_{r_n+j}$ scaled by the ratios $\dfrac{u_n^{-1}(\norm{\mathbf{X}_{r_n+j}})}{\tau}$ conditional on $u_n^{-1}(\norm{\mathbf{X}_{r_n}}) \leq \tau$. Then, $(Z_j)_{j \in \mathbb{Z}}$ distributes as $(Y_j)_{j \in \mathbb{Z}}$ conditional on $u_n^{-1}(\norm{\mathbf{X}_{r_n+j}})>\tau$ for all negative $j$.

In the remainder of this subsection we include some definitions that we will need later on.

Let $\tilde{l}_{\infty}=l_{\infty}/_{\sim}$ where, for all $\mathbf{x},\mathbf{y} \in l_{\infty}$, $\mathbf{x} \sim \mathbf{y} \iff \mathbf{y}=\sigma^k(\mathbf{x})$ for some $k \in \mathbb{Z}$ ($\sigma$ being the left-shift map) and let $\tilde{\pi}:l_{\infty} \to \tilde{l}_{\infty}$ be the canonical projection.

For the computation of the distribution of the piling process we may use its polar decomposition, that is $L_Z:=\displaystyle\inf_{j \in \mathbb{Z}} \norm{Z_j}$ and $Q_j:=\dfrac{Z_j}{L_Z}$ for all $j \in \mathbb{Z}$.

Let
\begin{equation*}
l_{0}=\{\mathbf{x}=(x_j)_{j \in \mathbb{Z}} \in (\mathbb{R}^d)^{\mathbb{Z}}: \lim_{\modl{j} \to \infty} \norm{x_j}=0\}.
\end{equation*}
Let
\begin{equation*}
\func{p}{\overline{\mathbb{R}^d}\setminus\{0\}}{\mathbb{R}^d}{x}{\begin{cases}
\frac{x}{\norm{x}^2}, \ x \neq \infty\\
0, \ \text{otherwise}
\end{cases}}
\end{equation*}
and $P:l_{\infty} \to l_{0}$ defined as $P((x_j)_{j \in \mathbb{Z}})=(p(x_j))_{j \in \mathbb{Z}}$. For $\tilde{l}_0$ defined in the same way as $\tilde{l}_{\infty}$ above, then $P:\tilde{l}_{\infty} \to \tilde{l}_{0}$ is such that $\tilde{P}(\tilde{\pi}(\mathbf{x}))=\tilde{\pi}(P(\mathbf{x}))$. Let
\begin{equation}\label{eq:psi-aux}
\func{\psi}{\tilde{l}_{\infty} \setminus \{\tilde{\infty}\}}{\mathbb{R}^{+} \times \mathbb{S}}{\tilde{\mathbf{x}}}{\left(\dfrac{1}{\norm{\tilde{P}(\tilde{\mathbf{x}})}_{\infty}},\dfrac{\tilde{\mathbf{x}}}{\norm{\tilde{P}(\tilde{\mathbf{x}})}_{\infty}^{-1}}\right)}.
\end{equation}
Finally, let $\mathbf{\tilde{Q}}=\tilde{\pi}((Q_j)_{j \in \mathbb{Z}})$.
\begin{remark}\label{rmk:piling-distr}
Notice that $\psi(\tilde{\pi}((Z_j)_{j \in \mathbb{Z}}))=(L_Z,\mathbf{\tilde{Q}})$. $L_Z$ is uniformly distributed on $[0,1]$ and independent of $\mathbf{\tilde{Q}}$ (by \cite[Corollary 3.14]{20enriched}) whose distribution is $\mathbb{P}_{\mathbf{\tilde{Q}}}$.
\end{remark}
Let
\begin{equation}\label{eq:xi-map}
\func{\xi}{\overline{\mathbb{R}^d}\setminus\{0\}}{\mathbb{R}^{d}}{x}{\begin{cases}
(\norm{x})^{-\frac{1}{\alpha}}\frac{x}{\norm{x}}, \ x \neq \infty\\
0, \ \text{otherwise}
\end{cases}}.
\end{equation}

\subsection{Cluster point process}\label{subsec:cluster-pp}
Let the process $(\mathbf{X}_n)_{n \in \mathbb{N}_0}$ run until time $n$ when $k_n$ blocks are each made of $r_n:=\floor{n/k_n}$ observations. This means sequences $(k_n)_{n \in \mathbb{N}}$ and $(r_n)_{n \in \mathbb{N}}$ which, additionally, must be such that $k_n,r_n \underset{\scriptsize{n \to \infty}}{\longrightarrow}\infty$. We note that this is a declustering method in the sense that we determine that observations belonging to different blocks correspond to different clusters (see Section 3.1 of \cite{20enriched} for more details).

Then, we write
\begin{equation*}
\mathbb{X}_{n,i}\equiv \mathbb{X}_{n}^{r_n(i-1),r_n.i-1}=\left(u_n^{-1}(\norm{\mathbf{X}_{r_n(i-1)}})\dfrac{\mathbf{X}_{r_n(i-1)}}{\norm{\mathbf{X}_{r_n(i-1)}}},\dots,u_n^{-1}(\norm{\mathbf{X}_{r_n.i-1}})\dfrac{\mathbf{X}_{r_n.i-1}}{\norm{\mathbf{X}_{r_n.i-1}}}\right)
\end{equation*}
so that the unit vector in the direction of $\mathbf{X}_j$ is scaled by the asymptotic frequency $u_n^{-1}(\norm{\mathbf{X}_j})$ for all indices $j$ matching those of the observations in the $i$-th block.
\begin{remark}\label{rmk:X_n,s,t-l-inf}
Observe that there is a natural identification between $\mathbb{X}_n^{s,t}$ and an element of $l_{\infty}$ simply by considering, in $l_{\infty}$, the bi-infinite sequence with all entries to the left of $u_n^{-1}(\norm{\mathbf{X}_{s}})\dfrac{\mathbf{X}_{s}}{\norm{\mathbf{X}_{s}}}$ and all entries to the right of $u_n^{-1}(\norm{\mathbf{X}_{t}})\dfrac{\mathbf{X}_{t}}{\norm{\mathbf{X}_{t}}}$ equal to $\infty$ (and $\mathbb{X}_n^{s,t}$ in between).
\end{remark}

The (empirical) cluster point process
\begin{equation}\label{eq:gen-N_n}
N_n=\sum_{i=1}^{\infty} \delta_{(i/k_n,\tilde{\pi}(\mathbb{X}_{n,i}))}
\end{equation}
places the pile $\mathbb{X}_{n,i}$ (seen as a bi-infinite sequence, under the identification given in Remark \ref{rmk:X_n,s,t-l-inf}) on the vertical line through $i/k_n$, with $\tilde{\pi}$ allowing for the assumption that the entries $u_n^{-1}(\norm{\mathbf{X}_{r_n(i-1)}})\dfrac{\mathbf{X}_{r_n(i-1)}}{\norm{\mathbf{X}_{r_n(i-1)}}}$ are placed in the horizontal axis.
\begin{theorem}[\cite{20enriched}]\label{thm:pp-conv-FFT20}
If a normalising sequence $(u_n)_{n \in \mathbb{N}}$ as above exists, the piling process is well defined and conditions $\D_{q_n}$ and $\D'_{q_n}$ (as in Appendix \ref{appx:dependance}) are satisfied, then $N_n$ converges weakly (in the space of boundedly finite point measures on $\mathbb{R}_0^{+} \times \tilde{l}_{\infty} \setminus \{\tilde{\infty}\}$ with weak$^{\#}$ topology) to
\begin{equation}\label{eq:gen-N}
N=\sum_{i=1}^{\infty} \delta_{(T_i,U_i\mathbf{\tilde{Q}_i)}}
\end{equation}
which is a Poisson point process with intensity measure $Leb \times \eta$, where $\eta=\vartheta(Leb \times \mathbb{P}_{\mathbf{\tilde{Q}}}) \circ \psi$ (\textit{cf.} (\ref{eq:psi-aux})).
\end{theorem}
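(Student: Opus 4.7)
The plan is to verify a standard criterion for weak$^{\#}$ convergence to a Poisson point process on $\mathbb{R}_0^{+} \times \tilde{l}_{\infty} \setminus \{\tilde{\infty}\}$—for instance, checking that for every bounded Borel set $A$ in the continuity class of $Leb \times \eta$ one has $\mathbb{P}(N_n(A)=0) \to e^{-(Leb \times \eta)(A)}$ together with $\mathbb{E}[N_n(A)] \to (Leb \times \eta)(A)$—or, equivalently, working at the level of Laplace functionals. The argument would proceed in three stages matched to the three hypotheses, followed by identification of the limit intensity.

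First, I would use the mixing condition $\D_{q_n}$ to decouple blocks. After splitting $\{0,1,\dots,n-1\}$ into $k_n$ consecutive blocks of length $r_n$, I would introduce auxiliary gaps of length $q_n$ at each boundary. Since $q_n/r_n \to 0$, discarding the gap observations perturbs $N_n$ by a vanishing amount. Iterating $\D_{q_n}$ across the $k_n-1$ boundaries, the joint law of the $k_n$ truncated block contributions $\tilde{\pi}(\mathbb{X}_{n,i})$ may be replaced by that of independent copies, up to an $o(1)$ error at the level of the test functionals. This reduces the problem to analysing a single block and then superposing $k_n$ independent copies.

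Second, within a single block, conditional on the anchoring exceedance $\norm{\mathbf{X}_{r_n}} > u_n(\tau)$, the rescaled vector $\tau^{-1}\mathbb{X}_n^{r_n+s,r_n+t}$ converges in law to $(Y_j)_{j=s,\dots,t}$ by hypothesis (1) in Definition \ref{def:piling}. The anti-clustering condition $\D'_{q_n}$ forbids two well-separated clusters inside one block, so finite-window convergence can be promoted to convergence of the full pile, with item (3) of Definition \ref{def:piling} taming the tails. Restricting further to blocks in which the cluster \emph{begins} (no exceedance in the previous $q_n$ positions) selects precisely the event $\inf_{j \leq -1} \norm{Y_j} \geq 1$, producing the piling process $(Z_j)_{j \in \mathbb{Z}}$.

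Third, I would identify the intensity. The time marks $i/k_n$ form a regular grid in $[0,1]$, and by the previous stage the fraction of blocks carrying a non-$\tilde{\infty}$ mark is $k_n \mu(U_n^{(q_n)}(\tau)) \sim \vartheta \tau$ by (\ref{eq:EI}) and (\ref{eq:u_n-tau}). Letting $\tau$ vary yields $Leb$ as the time component on $\mathbb{R}_0^{+}$, while by Remark \ref{rmk:piling-distr} the mark $(L_Z,\mathbf{\tilde{Q}}) = \psi(\tilde{\pi}((Z_j)_j))$ has independent coordinates with $L_Z$ uniform on $[0,1]$; transporting via $\psi$ (cf.\ (\ref{eq:psi-aux})) and absorbing $\vartheta$ gives $\eta = \vartheta (Leb \times \mathbb{P}_{\mathbf{\tilde{Q}}}) \circ \psi$. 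The main obstacle is the simultaneous control of the limits $n \to \infty$ and window-size $|j| \to \infty$: convergence must be promoted from finite coordinate projections to the whole of $\tilde{l}_{\infty}$, and this is exactly where $\D'_{q_n}$ combined with item (3) of Definition \ref{def:piling} supplies the tightness needed to close the argument.
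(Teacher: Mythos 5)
This theorem is imported verbatim from \cite{20enriched} (it is stated with the citation tag and the paper refers the reader to Sections 3.4 and 4.1 of that reference), so the present paper contains no proof of it to compare against. That said, your sketch follows exactly the architecture that the paper's own commentary attributes to the source: blocking into $k_n$ blocks of length $r_n$ with $q_n$-gaps, $\D_{q_n}$ as the long-range decoupling that reduces the problem to independent superposition of blocks, $\D'_{q_n}$ as the anti-clustering condition guaranteeing at most one cluster per block and allowing finite-window convergence of $\tau^{-1}\mathbb{X}_n^{r_n+s,r_n+t}$ to be promoted to convergence of the whole pile in $\tilde{l}_\infty$, and the identification of the intensity via the extremal index and the polar decomposition $(L_Z,\mathbf{\tilde{Q}})=\psi(\tilde{\pi}((Z_j)_j))$ of Remark \ref{rmk:piling-distr}. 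One small slip: the expected number of blocks carrying a non-$\tilde{\infty}$ mark is $n\mu(U_n^{(q_n)}(\tau))\to\vartheta\tau$ (equivalently, the per-block probability is $\approx r_n\mu(U_n^{(q_n)}(\tau))$), not $k_n\mu(U_n^{(q_n)}(\tau))$ as written; the conclusion $\eta=\vartheta(Leb\times\mathbb{P}_{\mathbf{\tilde{Q}}})\circ\psi$ is unaffected. As a blind reconstruction of the cited result your outline is sound, but a complete proof would have to fill in the weak$^{\#}$ tightness argument on $\tilde{l}_\infty\setminus\{\tilde{\infty}\}$, which your last paragraph correctly identifies as the main technical obstacle without resolving it.
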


\subsection{Enriched functional limit theorem}
\begin{definition}\label{def:alpha-reg}
$(\mathbf{X}_n)_{n \in \mathbb{N}_0}$ has an \textit{$\alpha$-regularly varying tail}, $\alpha \in (0,2)$, if there exists a sequence $(a_n)_{n \in \mathbb{N}}$ of positive real numbers such that
\begin{equation}\label{eq:alpha-hea}
\lim_{n \to \infty} n\mathbb{P}(\norm{\mathbf{X}_0}>ya_n)=y^{-\alpha}.
\end{equation}
\end{definition}
\begin{remark}
(\ref{eq:alpha-hea}) implies (\ref{eq:u_n-tau}): letting $\tau=y^{-\alpha}$ we have $u_n(\tau)=\tau^{-\frac{1}{\alpha}}a_n$. Moreover, $u_n^{-1}(z)=\left(\dfrac{z}{a_n}\right)^{-\alpha}$.
\end{remark}
\begin{theorem}[\cite{20enriched}]\label{thm:func-lim-FFT20}
Let $(\mathbf{X}_n)_{n \in \mathbb{N}_0}$ have an $\alpha$-regularly varying tail, where $\alpha \in (0,1)$. Under the hypotheses of Theorem \ref{thm:pp-conv-FFT20},
\begin{equation*}
S_n(t):=\sum_{i=0}^{\floor{nt}-1} \dfrac{1}{a_n}\mathbf{X}_i-tc_n, \ t \in [0,1],
\end{equation*}
converges in $F'$ to $(V,\text{disc}(V),(e_V^s)_{s \in \text{disc}(V)})$, where $V$ is an $\alpha$-stable Lévy process on $[0,1]$
\begin{equation*}
V(t)=\sum_{T_i \leq t}\sum_{j \in \mathbb{Z}} U_i^{-\frac{1}{\alpha}}\mathcal{Q}_{i,j}, \ t \in [0,1],
\end{equation*}
where $T_i$ and $U_i$ are as in $N$ in Theorem \ref{thm:pp-conv-FFT20}, $\mathcal{Q}_i=\xi(\mathbf{\tilde{Q}}_i)$ for $\mathbf{\tilde{Q}}_i$ as in $N$ in Theorem \ref{thm:pp-conv-FFT20} and $\xi$ as in (\ref{eq:xi-map}) and the excursions are given by $e_V^{T_i}(t)=V(T_i^{-})+U_i^{-2}\displaystyle\sum_{0 \leq j \leq \floor{\tan(\pi(t-\frac{1}{2}))}} \mathcal{Q}_{i,j}$.
\end{theorem}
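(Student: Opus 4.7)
The plan is to derive the enriched functional limit from the complete convergence of the cluster point process $N_n\Rightarrow N$ established in Theorem~\ref{thm:pp-conv-FFT20}, via a continuous-mapping argument. The space $F'$ is engineered precisely so that a ``summation'' functional $\Phi$, sending a locally finite collection of (time, pile) pairs to a càdlàg function together with an excursion at each jump, is continuous on a set of full measure under $N$. Because $\alpha\in(0,1)$, the contribution of below-threshold observations is asymptotically negligible as the threshold goes to $0$, which reduces the problem to a truncated version that is a genuine function of $N_n$.

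First I would define $\Phi$ on point measures $m=\sum_i\delta_{(t_i,\tilde y_i)}$ on $\mathbb{R}_0^+\times(\tilde l_\infty\setminus\{\tilde\infty\})$ by
\begin{equation*}
V_m(t):=\sum_{t_i\le t}\sum_{j\in\mathbb Z}\xi(y_{i,j}),
\end{equation*}
with $\xi$ as in (\ref{eq:xi-map}); the inner sum is well defined since pile entries blow up as $\modl{j}\to\infty$ and $\xi$ sends them to $0$. I would declare the discontinuity set of $V_m$ to be $\{t_i\}$ and the excursion at $t_i$ to be the partial-sum path of $(\xi(y_{i,j}))_{j\in\mathbb Z}$, reparametrised by the $\tan(\pi(t-\tfrac12))$ substitution that fits $\mathbb Z$ inside $[0,1]$. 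Using the factorisation $y_{i,j}=U_iQ_{i,j}$ for points of $N$ and $\mathcal Q_{i,j}=\xi(Q_{i,j})$, one checks that $\xi(y_{i,j})=U_i^{-1/\alpha}\mathcal Q_{i,j}$, so $V_N$ coincides with the process $V$ in the statement. The condition $\alpha<1$ ensures $\sum_{T_i\le 1}\sum_j U_i^{-1/\alpha}\norm{\mathcal Q_{i,j}}<\infty$ a.s., hence $V$ is a pure-jump process of finite variation.

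Next I would match $S_n$ with $\Phi(N_n)$ up to negligible error. Let $\Phi_\epsilon$ denote $\Phi$ restricted to pile entries with $u_n^{-1}(\norm{\mathbf X_j})\le\epsilon$, i.e.\ observations above threshold $\epsilon a_n$; by the definition of $\mathbb X_{n,i}$ and of $\xi$,
\begin{equation*}
\frac{1}{a_n}\sum_{i=0}^{\floor{nt}-1}\mathbf X_i - tc_n = \Phi_\epsilon(N_n)(t) + R_n^\epsilon(t),
\end{equation*}
where $R_n^\epsilon$ collects the sub-threshold contributions plus a centering discrepancy. A heavy-tail estimate using (\ref{eq:alpha-hea}) and $\alpha<1$ gives $\mathbb{E}[\sup_t\norm{R_n^\epsilon(t)}]\le C\epsilon^{1-\alpha}$ uniformly in $n$, with $c_n$ chosen so that the truncated mean vanishes. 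For each fixed $\epsilon$, the continuous mapping theorem applied to $\Phi_\epsilon$ at $N$ yields $\Phi_\epsilon(N_n)\Rightarrow\Phi_\epsilon(N)$ in $F'$, and a standard double-limit passage letting $\epsilon\to 0$ delivers the stated limit $(V,\mathrm{disc}(V),(e_V^s))$, with the Lévy measure of $V$ identified through $\eta=\vartheta(Leb\times\mathbb P_{\tilde Q})\circ\psi$.

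The hard part is the continuity of $\Phi$ into the \emph{enriched} space $F'$ rather than into $D([0,1])$ under $J_1$. Continuity into $D([0,1])$ under $J_1$ genuinely fails in the clustered regime, because several jumps of $S_n$ within a single cluster collapse onto a single jump time $T_i$ in the limit; $F'$ repairs this loss by recording the ordered intra-cluster jumps through the excursion $e_V^{T_i}$. Verifying that $\Phi$ is continuous on $F'$ demands translating weak$^\#$ convergence of the piles $\tilde\pi(\mathbb X_{n,i})$ into convergence of their reparametrised partial-sum paths as excursions, controlling the bi-infinite index set via the tail condition $\norm{Y_j}\to\infty$ a.s.\ from Definition~\ref{def:piling}, and ensuring that small perturbations of the jump times do not disturb the excursion limit. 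Once this topological machinery is in place, the theorem follows by assembling the Poisson convergence of $N_n$, the continuity of $\Phi_\epsilon$, the truncation estimate, and the Lévy--Itô description of $V$ in terms of the Poisson atoms $(T_i,U_i,\tilde Q_i)$.
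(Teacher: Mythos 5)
Your outline follows essentially the route the paper indicates: this theorem is quoted from \cite{20enriched}, and the paper itself offers no proof beyond the remark that it follows from the cluster point process convergence of Theorem \ref{thm:pp-conv-FFT20} by a continuous mapping argument, with the convergence in $F'$ split into convergence in the spaces $E$ (from \cite{basrak18}, corresponding to your truncated summation functional $\Phi_\epsilon$ and the $\epsilon\to 0$ passage, valid for $\alpha\in(0,1)$) and $\tilde{D}$ (\cite[Proposition 4.4]{20enriched}, corresponding to the step you correctly single out as the hard part, namely continuity of the excursion assignment). One cosmetic slip: since $u_n^{-1}(z)=(z/a_n)^{-\alpha}$ is decreasing, the observations above the level $\epsilon a_n$ are the pile entries with $u_n^{-1}(\norm{\mathbf{X}_j})\le \epsilon^{-\alpha}$ (a cutoff that grows as $\epsilon\to 0$), not those with $u_n^{-1}(\norm{\mathbf{X}_j})\le\epsilon$; with that correction your remainder bound of order $\epsilon^{1-\alpha}$ and the double-limit argument are the standard ones.
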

We make a small comment on the proof of Theorem \ref{thm:func-lim-FFT20} (which is detailed in section 4.1 of \cite{20enriched}). The convergence of the empirical cluster point process $N_n$ expressed in Theorem \ref{thm:pp-conv-FFT20}  is the crucial preliminary step to Theorem \ref{thm:func-lim-FFT20}. By the Continuous Mapping Theorem, such point process convergence obtained in the dynamical setting translates to a point process convergence in the pure probabilistic setting of \cite{basrak18}. Then, proving convergence in $F'$ means proving convergence of the projections into two other functional spaces $E$ and $\tilde{D}$. The convergence in $E$ follows from \cite{basrak18} and the convergence in $\tilde{D}$ is shown in \cite[Proposition 4.4]{20enriched}. We note that we are restricting to the case of $\alpha \in (0,1)$.

Thus, we must check that equation (\ref{eq:alpha-hea}) holds in our examples. As for the conditions $\D_{q_n}$ and $\D'_{q_n}$, they are usual dependence requirements that can be deduced from good mixing properties of the underlying dynamical systems. We do recall them and verify that they hold for the examples we consider, but leave that work to Appendix \ref{appx:dependance} (and \ref{appx:dependance-ex}). So, the remaining work in obtaining Theorem \ref{thm:func-lim-FFT20} is to deal with the piling processes.

\section{A finite number of points in the same orbit}\label{sec:corr-finite}
Here $\mathcal{M}=\{\xi_1,\dots,\xi_N\}$ such that there exist $m_1,\dots,m_N$ with $\xi_i=f^{m_i}(\zeta)$, where $\zeta \in \mathcal{X}$, and we take $m_1=0$ (\textit{i.e.} $\xi_1=\zeta$). The piling processes are fundamentally different when $\zeta$ is not/is periodic, as can be seen in Theorem \ref{thm:piling-corr-nper} and in Theorem \ref{thm:piling-corr-per}, respectively. We illustrate the use of the same theorems with examples inspired in those in sections 4.3 and 5.3 of \cite{16correlated}.

Before presenting the statements and proofs of Theorems \ref{thm:piling-corr-nper} and \ref{thm:piling-corr-per} we provide some heuristics and fix notation.

Let an exceedance of the level $u_n(\tau)$, at time $r_n$, be due to a hit to a specified small neighbourhood of $\xi_i$ for a certain $i \in \{1,\dots,N\}$ that we now fix. Then, at times $r_n+j$ where $j=m_l-m_i$ for all $l=1,\dots,i-1,i+1,\dots,N$, there are hits to neighbourhoods of $f^j(\xi_i)$, which correspond to exceedances of levels $u_n(\tau_1),\dots,u_n(\tau_{i-1}),u_n(\tau_{i+1}),\dots,u_n(\tau_N)$. The piling process stores, at position $j=m_l-m_i$ for all $l=1,\dots,i-1,i+1,\dots,N$, (the asymptotic behaviour of) the ratio $\dfrac{\tau_l}{\tau}$ projected on the direction of the point whose observation exceeds $u_n(\tau_l)$. As we prove further on, this is (asymptotically) tied to the derivative of $f$ and the constants $c_i$ appearing in the $h_i$ (recall (\ref{eq:hi})).

In addition, the definition of the piling process requires that all negatively indexed entries have norms greater than or equal to $1$ (\textit{cf.} Definition \ref{def:piling}). When the $c_i$ are all equal and $f$ is expanding, then all negatively indexed entries must be equal to $\infty$. However, when the $c_i$ differ, whether it is possible to have negatively indexed entries with norms greater than or equal to $1$ (and that are not $\infty$) depends on the balance between $\left(\dfrac{c_i}{c_l}\right)^{\alpha}$ and the norm of $Df_{\xi_i}^{m_l-m_i}$, for all $l=1,\dots,i-1$. One can think in terms of how much the factor $\left(\dfrac{c_i}{c_l}\right)^{\alpha}$ makes a contraction look like an expansion - we use the expression `fake expansion' to refer to this.

We are going to restrict to the cases where fake expansion, if it exists, holds at every direction. We remark that we can always have that for a suitable choice of the constants $c_i$ (depending on the derivative of $f$). Although we miss full generality with such an assumption, we do it to avoid an otherwise very technical statement.

The set $A^{(i)}$, that we define next, stores the indices $j=m_l-m_i$, for all $l \in \{1,\dots,i-1\}$, compatible with entries different from $\infty$ that appear left of index $j=0$ (indices compatible with fake expansion).

Let $\mathcal{I}=\{1,\dots,N\}$ (recall from Section \ref{sec:intro} that $\mathcal{M}=\{\xi_1,\dots,\xi_N\}$ corresponds to $\mathcal{I}=\{1,\dots,N\}$).

We split into two cases according to $\zeta$ (\textit{i.e.} $\xi_1$) being non-periodic or periodic, respectively.
\begin{enumerate}[label=(A\arabic*)]
\item\label{itm:A1} Assume $\zeta$ is non-periodic. Let $i \in \mathcal{I}$. If $i>1$, for all $l=1,\dots,i-1$, let
\begin{equation*}
\lambda_{i,l}^{min}=\min\{\norm{Df_{\xi_i}^{m_l-m_i}(w)}: \norm{Df_{\xi_i}^{m_l-m_i}(w)}<1, w \in \mathbb{S}^{d-1}\},
\end{equation*}
\begin{equation*}
\lambda_{i,l}^{max}=\max\{\norm{Df_{\xi_i}^{m_l-m_i}(w)}: \norm{Df_{\xi_i}^{m_l-m_i}(w)}<1, w \in \mathbb{S}^{d-1}\},
\end{equation*}
and define $u_{i,l}^{min}=\left(\dfrac{c_l}{c_i}\right)^{\alpha}\dfrac{1}{\lambda_{i,l}^{min}}$ and $u_{i,l}^{max}=\left(\dfrac{c_l}{c_i}\right)^{\alpha}\dfrac{1}{\lambda_{i,l}^{max}}$. Let $A^{(1)}:=\emptyset$ and, for all $i>1$, $A^{(i)}:=\{m_l-m_i: u_{i,l}^{min} < 1\}$.
\item\label{itm:A2} Assume $\zeta$ is periodic of prime period $q$. Let $i \in \mathcal{I}$. For all $i,l \in \mathcal{I}$, if $s \in \mathbb{N}_0$ is such that $m_l-m_i-qs<0$, let
\begin{equation*}
\lambda_{i,l,s}^{min}=\min\{\norm{Df_{\xi_i}^{m_l-m_i-qs}(w)}: \norm{Df_{\xi_i}^{m_l-m_i-qs}(w)}<1, w \in \mathbb{S}^{d-1}\},
\end{equation*}
\begin{equation*}
\lambda_{i,l,s}^{max}=\max\{\norm{Df_{\xi_i}^{m_l-m_i-qs}(w)}: \norm{Df_{\xi_i}^{m_l-m_i-qs}(w)}<1, w \in \mathbb{S}^{d-1}\},
\end{equation*}
and define $u_{i,l,s}^{min}=\left(\dfrac{c_l}{c_i}\right)^{\alpha}\dfrac{1}{\lambda_{i,l,s}^{min}}$ and $u_{i,l,s}^{max}=\left(\dfrac{c_l}{c_i}\right)^{\alpha}\dfrac{1}{\lambda_{i,l,s}^{max}}$.\\
Let $A^{(i)}:=\{m_l-m_i-qs: u_{i,l,s}^{min} < 1\}$.
\end{enumerate}
We give a visual interpretation of how $A^{(i)}$ is obtained in case \ref{itm:A1} in Figure \ref{fig:A-i}. In case \ref{itm:A2} the reasoning is analogous accounting for the successive hits to neighbourhoods of the points in $\mathcal{M}$ at every integer multiple of the period $q$.
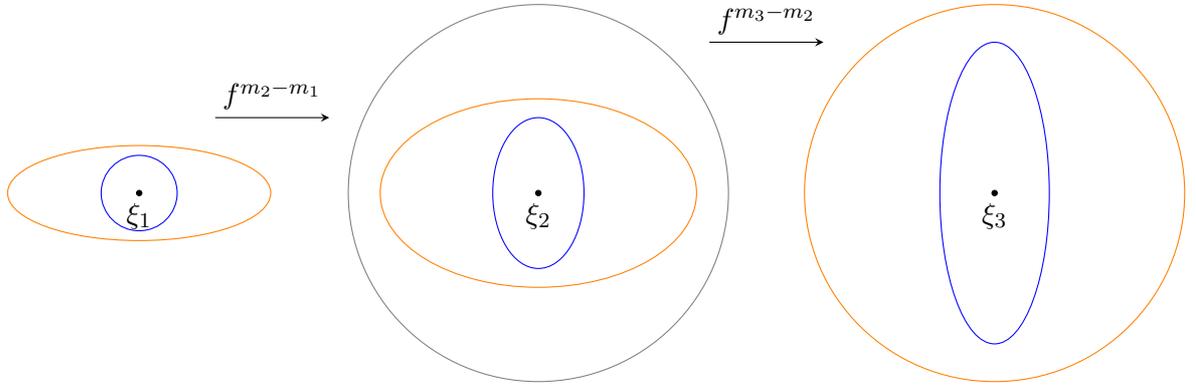
\begin{figure}[H]
\centering
\begin{tikzpicture}
%f is times 1.2 times 2
%aroundxi1
\filldraw[black] (0,0) circle (1pt);
\node at (0,-0.3) {$\xi_1$};
\draw[thin,blue] (0,0) circle (0.5cm);
\draw[thin,orange] (0,0) ellipse[x radius=1.73cm, y radius=0.63cm];
%aroundxi2
\filldraw[black] (5.25,0) circle (1pt);
\node at (5.25,-0.3) {$\xi_2$};
\draw[thin,gray] (5.25,0) circle (2.5cm);
\draw[thin,blue] (5.25,0) ellipse[x radius=0.6cm, y radius=1cm];
\draw[thin,orange] (5.25,0) ellipse[x radius=2.08cm, y radius=1.25cm];
%aroundxi3
\filldraw[black] (11.25,0) circle (1pt);
\node at (11.25,-0.3) {$\xi_3$};
\draw[thin,orange] (11.25,0) circle (2.5cm);
\draw[thin,blue] (11.25,0) ellipse[x radius=0.72cm, y radius=2cm];
%dyn
\draw [-stealth](1,1) -- (2.5,1);
\node at (1.75,1.3) {$f^{m_2-m_1}$};
\draw [-stealth](7.5,2) -- (9,2);
\node at (8.25,2.3) {$f^{m_3-m_2}$};
\end{tikzpicture}
\caption{Example with $\mathcal{I}=\{1,2,3\}$, $i=3$ and $A^{(3)}=\{m_1-m_3\}$, for a $2$-dimensional uniformly expanding $f$ and observable $\Psi$ as in (\ref{eq:psi}) with $c_1$ sufficiently smaller than $c_3$ so that fake expansion holds at $j=m_1-m_3$, and $c_2=c_3$ (\textit{cf.} (\ref{eq:hi})). The boundaries of the balls of radius $h_i^{-1}(u_n(\tau))$ centred at $\xi_i$ for $i=1,2,3$ are the blue, grey and orange circles, respectively. We may say blue, grey or orange ball to refer to, respectively, the ball around $\xi_1$, $\xi_2$ or $\xi_3$ whose boundary is the blue, grey or orange circle. We condition on an exceedance of the threshold $u_n(\tau)$ being due to a hit (at time $j=0$) to the orange ball. Then, at times $j=m_2-m_3$ and $j=m_1-m_3$ there may have been hits to some neighbourhoods of $\xi_2$ and $\xi_1$, respectively. Because the connected component of the $(m_3-m_2)$-th pre-image of the orange ball which intersects the grey ball is strictly contained in the grey ball, we have $u_{3,2}^{max}>1$ (which implies $u_{3,2}^{min}>1$). On the other hand, the connected component of the $(m_3-m_1)$-th pre-image of the orange ball which intersects the blue ball strictly contains the blue ball which is compatible with $u_{3,1}^{min}<1$. This leads to $A^{(3)}=\{m_1-m_3\}$. Thus, the piling process has (a.s.) an infinity entry at position $j=m_2-m_3$; however, there exists a region around $\xi_3$ (the annulus delimited by the blue ellipse and the orange circle) for which the piling process has non-infinity entries at position $j=m_1-m_3$. We note that the piling process still has infinity entries at position $j=m_1-m_3$ if the hit to the orange ball around $\xi_3$ belongs to the interior of the blue ellipse, or if it is preceded by a hit (at time $m_1-m_3$) to a neighbourhood of a pre-image of $\xi_3$ which is not $\xi_1$.}
\label{fig:A-i}
\end{figure}
\begin{remark}
The condition $u_{i,l}^{min} < 1$, where $i \in \mathcal{I}$ and $l \in \{1,\dots,i-1\}$, expresses the geometrical requirement that, for all $n \in \mathbb{N}$, the image under $f^{m_i-m_l}$ of the ball of radius $h_l^{-1}(u_n(\tau))$ around $\xi_l$ is strictly contained in the ball of radius $h_i^{-1}(u_n(\tau))$ around $\xi_i$. In Figure \ref{fig:A-i}, the blue ball around $\xi_1$ is mapped by $f^{m_3-m_1}$ to an ellipse which is strictly contained in the orange ball around $\xi_1$.
\end{remark}

Moreover, the balance between $\left(\dfrac{c_i}{c_l}\right)^{\alpha}$ and the norm of $Df_{\xi_i}^{m_l-m_i}$, for all $l=1,\dots,i-1$, also determines the probabilities with which the different sequences corresponding to the indices $m_l-m_i \in A^{(i)}$ appear - due to the $u_{i,l}^{min},u_{i,l}^{max}$ determining ranges for each sequence. This is formalised in the statements of our theorems.

\subsection{Main theorem and examples in the non-periodic case}
\begin{theorem}\label{thm:piling-corr-nper}
Let $f$, $\mu$ and $\Psi$ be as specified in Section \ref{sec:intro} with $\mathcal{M}$ as in \ref{itm:A}, where $\zeta$ is a non-periodic point. Assume that $(\mathbf{X}_n)_{n \in \mathbb{N}_0}$ has an $\alpha$-regularly varying tail, where $\alpha \in (0,1)$. For all $i \in \mathcal{I}$, define $p_i=\dfrac{D_ic_i^{\alpha d}}{\sum_{k=1}^{N}D_kc_k^{\alpha d}}$. Let $A^{(i)}$ be as defined in \ref{itm:A1}. If $A^{(i)}=\emptyset$, the piling process is
\begin{enumerate}[(0)]
\item\label{itm:(0)-piling-fin-nper} with probability $p_i$ the bi-infinite sequence $(Z_j)_{j \in \mathbb{Z}}$ with:
\begin{enumerate}[(i)]
\item entry $U.\Theta$ at $j=0$;
\item entries $U.Df_{\xi_i}^{m_l-m_i}(\Theta)\left(\dfrac{c_i}{c_l}\right)^{\alpha}$ at $j=m_l-m_i$ for all $l=i+1,\dots,N$;
\item $\infty$ for all other positive indices $j$;
\item $\infty$ for all negative indices $j$;
\end{enumerate}
where $U$ is uniformly distributed on $[0,1]$, $\Theta$ is uniformly distributed on $\mathbb{S}^{d-1}$, and $U$ and $\Theta$ are independent.
\end{enumerate}
If $A^{(i)} \neq \emptyset$, assume there exists an increasing ordering of the $u_{i,l}^{min,max}$ such that $u_{i,l_p}^{min} \leq u_{i,l_{p+1}}^{max}$ for all $p \in \{1,\dots,\#A^{(i)}-1\}$. Then, the piling process is
\begin{enumerate}[(I)]
\item with probability $p_iu_{i,l_1}^{max}$ the bi-infinite sequence $(Z_j)_{j \in \mathbb{Z}}$ with:\begin{enumerate}[(i)]
\item entry $U_0.\Theta$ at $j=0$;
\item entries $U_0.Df_{\xi_i}^{m_l-m_i}(\Theta)\left(\dfrac{c_i}{c_l}\right)^{\alpha}$ at $j=m_l-m_i$ for all $l=i+1,\dots,N$;
\item $\infty$ for all other positive indices $j$;
\item $\infty$ for all negative indices $j$;
\end{enumerate}
where $U_0$ is uniformly distributed on $[0,u_{i,l_1}^{max})$, $\Theta$ is uniformly distributed on $\mathbb{S}^{d-1}$, and $U_0$ and $\Theta$ are independent;
\item\label{itm:(II)-piling-fin-nper} with probability $p_i(u_{i,l_p}^{min}-u_{i,l_p}^{max})$, where $p \in \{1,\dots,\#A^{(i)}\}$, the bi-infinite sequence $(Z_j)_{j \in \mathbb{Z}}$ with:\begin{enumerate}[(i)]
\item entry $U_{p'}.\Theta$ at $j=0$;
\item entries $U_{p'}.Df_{\xi_i}^{m_l-m_i}(\Theta)\left(\dfrac{c_i}{c_l}\right)^{\alpha}$ at $j=m_l-m_i$ for all $l=i+1,\dots,N$;
\item $\infty$ for all other positive indices $j$;
\item entries $U_{p'}.Df_{\xi_i}^{m_l-m_i}(\Theta)\left(\dfrac{c_i}{c_l}\right)^{\alpha}$ at $j=m_l-m_i$ for all $l \in \{l_1,\dots,l_p\}$;
\item $\infty$ for all other negative indices $j$;
\end{enumerate}
where $U_{p'}$ is uniformly distributed on $[u_{i,l_p}^{max},u_{i,l_p}^{min})$ and $\Theta \given \{U_{p'}=u\}$ is uniformly distributed on $\left\lbrace w \in \mathbb{S}^{d-1}:\norm{Df_{\xi_i}^{m_{l_p}-m_i}(w)} \geq \dfrac{1}{u}\left(\dfrac{c_{l_p}}{c_i}\right)^{\alpha} \right\rbrace$;
\item with probability $p_i(u_{i,l_{p+1}}^{max}-u_{i,l_p}^{min})$, where $p \in \{1,\dots,\#A^{(i)}-1\}$, the bi-infinite sequence $(Z_j)_{j \in \mathbb{Z}}$ with:\begin{enumerate}[(i)]
\item entry $U_p.\Theta$ at $j=0$;
\item entries $U_p.Df_{\xi_i}^{m_l-m_i}(\Theta)\left(\dfrac{c_i}{c_l}\right)^{\alpha}$ at $j=m_l-m_i$ for all $l=i+1,\dots,N$;
\item $\infty$ for all other positive indices $j$;
\item entries $U_p.Df_{\xi_i}^{m_l-m_i}(\Theta)\left(\dfrac{c_i}{c_l}\right)^{\alpha}$ at $j=m_l-m_i$ for all $l \in \{l_1,\dots,l_p\}$;
\item $\infty$ for all other negative indices $j$;
\end{enumerate}
where $U_p$ is uniformly distributed on $[u_{i,l_p}^{min},u_{i,l_{p+1}}^{max})$, $\Theta$ is uniformly distributed on $\mathbb{S}^{d-1}$, and $U_p$ and $\Theta$ are independent;
\item with probability $p_i(1-u_{i,l_{\#A^{(i)}}}^{min})$ the bi-infinite sequence $(Z_j)_{j \in \mathbb{Z}}$ with:\begin{enumerate}[(i)]
\item entry $U_{\#A^{(i)}}.\Theta$ at $j=0$;
\item entries $U_{\#A^{(i)}}.Df_{\xi_i}^{m_l-m_i}(\Theta)\left(\dfrac{c_i}{c_l}\right)^{\alpha}$ at $j=m_l-m_i$ for all $l=i+1,\dots,N$;
\item $\infty$ for all other positive indices $j$;
\item entries $U_{\#A^{(i)}}.Df_{\xi_i}^{m_l-m_i}(\Theta)\left(\dfrac{c_i}{c_l}\right)^{\alpha}$ at $j=m_l-m_i$ for all $l \in A^{(i)}$;
\item $\infty$ for all other negative indices $j$;
\end{enumerate}
where $U_{\#A^{(i)}}$ is uniformly distributed on $[u_{i,l_{\#A^{(i)}}}^{min},1]$, $\Theta$ is uniformly distributed on $\mathbb{S}^{d-1}$, and $U_{\#A^{(i)}}$ and $\Theta$ are independent.
\end{enumerate}
\end{theorem}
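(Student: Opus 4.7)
The plan is to construct the process $(Y_j)_{j\in\mathbb{Z}}$ of Definition \ref{def:piling}, verify its four conditions, and then apply the conditioning $\inf_{j\leq -1}\norm{Y_j}\geq 1$ to extract $(Z_j)$. I would first decompose the exceedance event $\{\norm{\mathbf{X}_{r_n}}>u_n(\tau)\}$ over the disjoint hits $E_i:=\{\mathbf{X}_{r_n}\in W_i\}$, $i\in\mathcal{I}$. By \ref{itm:R3} and the volume formula $\text{Leb}(B_{h_i^{-1}(u_n(\tau))}(\xi_i))=V_d\,c_i^{\alpha d}u_n(\tau)^{-\alpha d}$, the conditional probability of $E_i$ converges to $p_i$. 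Given $E_i$, writing $\Phi_{\xi_i}^{-1}(\mathbf{X}_{r_n})=r\Theta$ and setting $U:=u_n^{-1}(h_i(r))/\tau$ yields an asymptotic joint law in which $U$ is uniform on $[0,1]$, $\Theta$ is uniform on $\mathbb{S}^{d-1}$, and the two are independent.

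Next I would compute the entries of $\mathbb{X}_n^{r_n+s,r_n+t}/\tau$ in the window $[s,t]$. By \ref{itm:R1} and a first-order Taylor expansion along the orbit, for $j=m_l-m_i$ with $l>i$ one has $\Phi_{\xi_l}^{-1}(f^j(\mathbf{X}_{r_n}))\approx r\cdot Df_{\xi_i}^{m_l-m_i}(\Theta)$ and hence $\norm{\mathbf{X}_{r_n+j}}\approx c_l\,r^{-1/\alpha}\norm{Df_{\xi_i}^{m_l-m_i}(\Theta)}^{-1/\alpha}$; this produces the scaled entry $U\,Df_{\xi_i}^{m_l-m_i}(\Theta)(c_i/c_l)^{\alpha}$. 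For $j=m_l-m_i$ with $l<i$, the same formula appears via the local inverse branch of $f^{m_i-m_l}$ that sends $\xi_i$ back to $\xi_l$, together with $Df_{\xi_i}^{m_l-m_i}=(Df_{\xi_l}^{m_i-m_l})^{-1}$. For every other index $j$, non-periodicity of $\zeta$ guarantees $f^j(\xi_i)\notin\mathcal{M}$, so $\norm{\mathbf{X}_{r_n+j}}$ stays bounded and the entry tends to $\infty$.

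The resulting $(Y_j)$ is thus the advertised mixture, with weights $p_i$, of the bi-infinite sequences listed in the theorem. I would then check conditions (1)--(4) of Definition \ref{def:piling}: (1) is immediate from the previous step; (2) holds because, for fixed $i$, each $\Theta_j=Y_j/\norm{Y_0}$ depends only on $\Theta$ and the deterministic factor $Df_{\xi_i}^{m_l-m_i}(c_i/c_l)^{\alpha}$, so is independent of $\norm{Y_0}=U$; (3) holds because only finitely many indices contribute finite entries in any bounded window; (4) holds because the $i=1$ branch always gives $\inf_{j\leq -1}\norm{Y_j}=\infty$.

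The main obstacle is carving out the cases (0) and (I)--(IV) from the conditioning. Fixing $i>1$, the event $\inf_{j\leq -1}\norm{Y_j}\geq 1$ reduces, for each $l\in\{1,\dots,i-1\}$, to $\norm{Df_{\xi_i}^{m_l-m_i}(\Theta)}\geq (c_l/c_i)^{\alpha}/U$. Under the all-or-nothing fake-expansion hypothesis encoded by $A^{(i)}$ together with the ordering assumption $u_{i,l_p}^{min}\leq u_{i,l_{p+1}}^{max}$, the unit interval of $U$-values partitions as
\begin{equation*}
[0, u_{i,l_1}^{max}),\ [u_{i,l_1}^{max}, u_{i,l_1}^{min}),\ [u_{i,l_1}^{min}, u_{i,l_2}^{max}),\ \dots,\ [u_{i,l_{\#A^{(i)}}}^{min}, 1],
\end{equation*}
and on each sub-interval both the set of indices $m_l-m_i$ forced to carry an $\infty$ entry and the admissible locus of $\Theta$ become explicit. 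Multiplying the Lebesgue lengths of the sub-intervals by $p_i$ yields the probabilities claimed; in case (II) with index $p$, the conditional law of $\Theta$ is exactly the uniform measure on $\{w\in\mathbb{S}^{d-1}:\norm{Df_{\xi_i}^{m_{l_p}-m_i}(w)}\geq (c_{l_p}/c_i)^{\alpha}/U\}$. When $A^{(i)}=\emptyset$ there is a single sub-interval $[0,1]$, giving case (0) with probability $p_i$.
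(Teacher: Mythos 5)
Your proposal follows essentially the same route as the paper's proof: the same decomposition over which ball $B_{h_i^{-1}(u_n(\tau))}(\xi_i)$ is hit (giving $p_i$ via \ref{itm:R3} and invariance of $\mu$), the same linearisation along the orbit to produce the entries $U\,Df_{\xi_i}^{m_l-m_i}(\Theta)\left(\dfrac{c_i}{c_l}\right)^{\alpha}$, the same use of non-periodicity to force $\infty$ at all other indices, and the same partition of the range of $U$ by the $u_{i,l}^{min,max}$ under the conditioning $\inf_{j \leq -1}\norm{Y_j} \geq 1$. The one point to state more carefully is that at a negative index $j=m_l-m_i$ the entry of $(Y_j)$ is the linearised formula only if the orbit actually passed through the neighbourhood of $\xi_l$ at that time, and is $\infty$ if it passed through a different $(m_i-m_l)$-th pre-image of $\xi_i$ (where $\Psi \equiv 0$); it is this second alternative that makes case (I) non-vacuous precisely when the formula would have norm strictly less than $1$.
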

\begin{remark}\label{rmk:equal-cs}
Assume $f$ is expanding and, for all $i \in \mathcal{I}$, $c_i=c$. Then, for any $i \in \mathcal{I}$, $\norm{Df_{\xi_i}^{m_l-m_i}(w)}<1$ for all $l=1,\dots,i-1$ and for all $w \in \mathbb{S}^{d-1}$. It follows that $A^{(i)}=\emptyset$ for all $i \in \mathcal{I}$. In particular, the piling process will be of the simplest form given by case \ref{itm:(0)-piling-fin-nper} in Theorem \ref{thm:piling-corr-nper} (where, in (ii), $\left(\dfrac{c_i}{c_l}\right)^{\alpha}=1$ implies the reduction to $U.Df_{\xi_i}^{m_l-m_i}(\Theta)$).
\end{remark}
\begin{remark}\label{rmk:equal-u-maxmin-1}
If $f$ is 1-dimensional then $u_{i,l}^{min}=u_{i,l}^{max}$ and case \ref{itm:(II)-piling-fin-nper} in Theorem \ref{thm:piling-corr-nper} doesn't exist.
\end{remark}
\begin{remark}
When $A^{(i)} \neq \emptyset$, the requirement for an increasing ordering of the $u_{i,l}^{min,max}$ such that $u_{i,l_p}^{min} \leq u_{i,l_{p+1}}^{max}$ for all $p \in \{1,\dots,\#A^{(i)}-1\}$ means a picture like Figure \ref{fig:u-i}. We observe that our choice of $c_i$ for $h_i$ (\textit{cf.} (\ref{eq:hi})), besides being tied to fake expansion, determines the existence of such increasing ordering. When $f$ is 1-dimensional it reduces to $u_{i,l_p} \leq u_{i,l_{p+1}}$, for all $p \in \{1,\dots,\#A^{(i)}-1\}$, which is trivially satisfied.
\begin{figure}[H]
\centering
\begin{tikzpicture}
%aroundxi1
\filldraw[black] (0,0) circle (1pt);
\node at (0,-0.3) {$\xi_1$};
\draw[thin] (0,0) circle (0.75cm);
\draw[thick,dotted] (0,0) ellipse[x radius=1.25cm, y radius=0.75cm];
\draw[thick,dotted] (0,0) ellipse[x radius=0.75cm, y radius=0.25cm];
%aroundxi2
\filldraw[black] (5.25,0) circle (1pt);
\node at (5.25,-0.3) {$\xi_2$};
\draw[thin] (5.25,0) circle (1.25cm);
\draw[dashed] (5.25,0) ellipse[x radius=1.75cm, y radius=1.25cm];
\draw[dashed] (5.25,0) ellipse[x radius=1.25cm, y radius=0.75cm];
%aroundxi3
\filldraw[black] (11.25,0) circle (1pt);
\node at (11.25,-0.3) {$\xi_3$};
\draw[thin] (11.25,0) circle (2cm);
\draw[thick,dotted] (11.25,0) circle (1.75cm);
\draw[thick,dotted] (11.25,0) circle (1.5cm);
\draw[dashed] (11.25,0) circle (1cm);
\draw[dashed] (11.25,0) circle (0.75cm);
%dyn
%\draw [-stealth](1.5,0) -- (3.25,0);
%\node at (2.25,0.3) {$f^{m_2-m_1}$};
%\draw [-stealth](7.25,0) -- (9,0);
%\node at (8,0.3) {$f^{m_3-m_2}$};
\end{tikzpicture}
\caption{Example with $\mathcal{I}=3$, $i=3$ and $A^{(3)}=\{1,2\}$, for a $2$-dimensional uniformly expanding $f$ and observable $\Psi$ as in (\ref{eq:psi}) with $c_1$ sufficiently smaller than $c_2$ and $c_2$ sufficiently smaller than $c_3$ so that fake expansion holds at both $j=m_1-m_3$ and $j=m_2-m_3$ and, additionally, $u_{3,l_1}^{min} \leq u_{3,l_{2}}^{max}$ with $l_1,l_2 \in \{1,2\}$. The boundaries of the balls of radius $h_i^{-1}(u_n(\tau))$ centred at $\xi_i$ for $i=1,2,3$ are the three black circles. We condition on an exceedance of the threshold $u_n(\tau)$ being due to a hit (at time $j=0$) to the ball of radius $h_3^{-1}(u_n(\tau))$ around $\xi_3$. The dotted annulus around $\xi_1$ (\textit{i.e.} the annulus delimited by the dotted ellipses around $\xi_1$) is mapped by $f^{m_3-m_1}$ to the dotted annulus around $\xi_3$ (\textit{i.e.} the annulus delimited by the dotted circles around $\xi_3$). Analogous statement for the dashed annuli around $\xi_2$ and $\xi_3$ and $f^{m_3-m_2}$. The interior of the smaller dotted circle around $\xi_3$ is compatible with infinity entries, in every direction, for the piling process at position $m_1-m_3$, while the exterior of the bigger dotted circle around $\xi_3$ is compatible with non-infinity entries, in every direction, for the piling process at position $j=m_1-m_3$ (\textit{cf.} Figure \ref{fig:A-i}). Analogous statement holds for the dashed annuli and entries at position $j=m_2-m_3$. The requirement $u_{3,l_1}^{min} \leq u_{3,l_{2}}^{max}$ means that the dotted and dashed annuli around $\xi_3$ are disjoint. Then, a non-infinity entry in any chosen direction at position $j=m_1-m_3$ implies a non-infinity entry in every direction at position $j=m_2-m_3$. In particular, $l_1=2$ and $l_2=1$.}
\label{fig:u-i}
\end{figure}
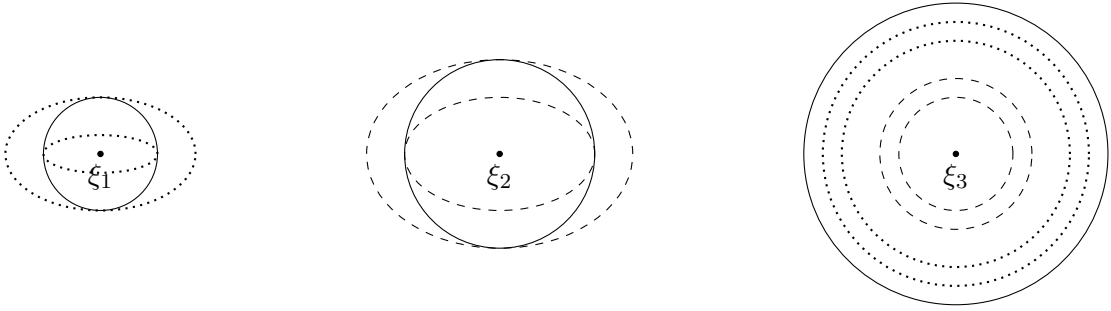
\end{remark}
We illustrate simple cases of Theorem \ref{thm:piling-corr-nper} by Examples \ref{ex:nper-evendist} and \ref{ex:nper-weighted}. After these, we provide an example of application of the general version of the theorem.
\begin{example}\label{ex:nper-evendist}
Let $f(x)=2x \mod 1$, $x \in [0,1]$, and $\mu= $ Lebesgue measure on $[0,1]$ (invariant for $f$). Take $\zeta=\dfrac{\sqrt{2}}{16}$ (non-periodic), and define the observable $\psi$ as
\begin{equation*}
\psi(x):=\begin{cases}
\modl{x-\zeta}^{-2}, \ x \in B_{\varepsilon_1}(\zeta)\\
\modl{x-f(\zeta)}^{-2}, \ x \in B_{\varepsilon_2}(f(\zeta))\\
\modl{x-f^3(\zeta)}^{-2}, \ x \in B_{\varepsilon_3}(f^3(\zeta))\\
0, \ \text{otherwise}
\end{cases}
\end{equation*}
for some $\varepsilon_1,\varepsilon_2,\varepsilon_3>0$. Observe that, presented as in (\ref{eq:psi}),
\begin{equation*}
\psi(x)=\displaystyle\sum_{i=1}^{3} h_i(\modl{x-\xi_i})\mathbf{1}_{B_{\varepsilon_i}(\xi_i)}(x)
\end{equation*}
for $\xi_1=\zeta$, $\xi_2=f(\zeta)$, $\xi_3=f^3(\zeta)$, and $h_i(t)=t^{-2}$ for $i=1,2,3$ (so that $\alpha=1/2$). In particular, $\mathcal{M}=\{\zeta,f(\zeta),f^3(\zeta)\}$ and equation (\ref{eq:alpha-hea}) holds with $a_n=36n^2$.

Since $\mu=\text{Lebesgue}$, we have that $D_i=1$ for $i=1,2,3$. Also, $d=1$ and $c_i=1$ for $i=1,2,3$. Thus, $p_1=p_2=p_3=\dfrac{1}{3}$. We have $f'(x)=2$ for all $x \in [0,1]$, so that $(f^{-j})'(\xi_i)=\dfrac{1}{2^j}$ when $j<0$ for $i=1,2,3$ leading to $A^{(i)}=\emptyset$ for $i=1,2,3$. Applying Theorem \ref{thm:piling-corr-nper}, we conclude that the piling process
is any of the bi-infinite sequences $(Z_j)_{j \in \mathbb{Z}}$
$$\left(\dots,\infty,U,U.2,\infty,U.2^3,\infty,\dots\right)$$
$$\left(\dots,\infty,U,\infty,U.2^2,\infty,\dots\right)$$
$$\left(\dots,\infty,U,\infty,\dots\right)$$
with probability $\dfrac{1}{3}$, where $U$ is uniformly distributed on $[0,1]$.

We clarify the notation used by stressing that, in all the three sequences presented, the entries equal to $U$ correspond to index $j=0$ and the entries that are visibly different from $\infty$ are the only such entries, as is imposed by the statement of Theorem \ref{thm:piling-corr-nper}.
\end{example}
\begin{remark}\label{rmk:EI-ex-1}
For Example \ref{ex:nper-evendist}, we have
\begin{equation*}
\mu(U_n(\tau))=\displaystyle\sum_{i=1}^{3} \mu(B_{h_i^{-1}(u_n(\tau))}(\xi_i))=3.2u_n(\tau)^{-\frac{1}{2}}.
\end{equation*}
Let $q_n=m_3-m_1=3$. It follows from \cite[Corollary 4.5]{16correlated},
\begin{equation*}
\begin{split}
\mu(U_n^{(q_n)}(\tau))&=\mu(B_{h_1^{-1}(u_n(\tau))}(\xi_1))-\frac{1}{2}\mu(B_{h_2^{-1}(u_n(\tau))}(\xi_2))\\
&\ +\mu(B_{h_2^{-1}(u_n(\tau))}(\xi_2))-\frac{1}{2^2}\mu(B_{h_3^{-1}(u_n(\tau))}(\xi_3))\\
&\ +\mu(B_{h_3^{-1}(u_n(\tau))}(\xi_3))\\
&=2u_n(\tau)^{-\frac{1}{2}}(1-\frac{1}{2}+1-\frac{1}{2^2}+1).
\end{split}
\end{equation*}
Thus, the extremal index is
\begin{equation*}
\begin{split}
\vartheta=\lim_{n \to \infty} \dfrac{\mu(U_n^{(q_n)}(\tau))}{\mu(U_n(\tau))}=\lim_{n \to \infty} \dfrac{2u_n(\tau)^{-\frac{1}{2}}(1-\frac{1}{2}+1-\frac{1}{2^2}+1)}{3.2u_n(\tau)^{-\frac{1}{2}}}=\dfrac{3}{4}.
\end{split}
\end{equation*}
\end{remark}

\begin{example}\label{ex:nper-weighted}
Consider the same $f$ and $\zeta$ as in Example \ref{ex:nper-evendist}, but take $\psi$ to be
\begin{equation*}
\psi(x):=\begin{cases}
\modl{x-\zeta}^{-2}, \ x \in B_{\varepsilon_1}(\zeta)\\
9\modl{x-f(\zeta)}^{-2}, \ x \in B_{\varepsilon_2}(f(\zeta))\\
\modl{x-f^3(\zeta)}^{-2}, \ x \in B_{\varepsilon_3}(f^3(\zeta))\\
0, \ \text{otherwise}
\end{cases}
\end{equation*}
for some $\varepsilon_1,\varepsilon_2,\varepsilon_3>0$. Observe that, presented as in (\ref{eq:psi}),
\begin{equation*}
\psi(x)=\displaystyle\sum_{i=1}^{3} h_i(\modl{x-\xi_i})\mathbf{1}_{B_{\varepsilon_i}(\xi_i)}(x)
\end{equation*}
for $\xi_1=\zeta$, $\xi_2=f(\zeta)$, $\xi_3=f^3(\zeta)$, $h_1(t)=h_3(t)=t^{-2}$ and $h_2(t)=9t^{-2}$ (so that $\alpha=1/2$). In particular, $\mathcal{M}=\{\zeta,f(\zeta),f^3(\zeta)\}$ and equation (\ref{eq:alpha-hea}) holds with $a_n=100n^2$.

Again, $D_i=1$ for $i=1,2,3$ and $d=1$, but now $c_1=c_3=1$ and $c_2=9$. Thus, $p_1=p_3=\dfrac{1^{\frac{1}{2}}}{1^{\frac{1}{2}}+9^{\frac{1}{2}}+1^{\frac{1}{2}}}=\dfrac{1}{5}$ and $p_2=\dfrac{9^{\frac{1}{2}}}{1^{\frac{1}{2}}+2^{\frac{1}{2}}+1^{\frac{1}{2}}}=\dfrac{3}{5}$. We have $f'(x)=2$ for all $x \in [0,1]$, so that $(f^{-j})'(\xi_i)=\dfrac{1}{2^j}$ when $j<0$ for $i=1,2,3$.

If $i=2$, then $\lambda_{2,1}^{min}=\lambda_{2,1}^{max}=\dfrac{1}{2}$, giving $u_{2,1}^{min}=u_{2,1}^{max}=\left(\dfrac{1}{9}\right)^{\frac{1}{2}}\dfrac{1}{\left(\frac{1}{2}\right)}=\dfrac{2}{3}$, leading to $A^{(1)}=\{1\}$.

If $i=3$, then $\lambda_{3,1}^{min}=\lambda_{3,1}^{max}=\dfrac{1}{2^3}$, giving $u_{3,1}^{min}=u_{3,1}^{max}=\left(\dfrac{1}{1}\right)^{\frac{1}{2}}\dfrac{1}{\left(\frac{1}{2^3}\right)}=8$, and $\lambda_{3,2}^{min}=\lambda_{3,2}^{max}=\dfrac{1}{2^2}$, giving $u_{3,2}^{min}=u_{3,2}^{max}=\left(\dfrac{9}{1}\right)^{\frac{1}{2}}\dfrac{1}{\left(\frac{1}{2^2}\right)}=12$, leading to $A^{(2)}=\emptyset$.

Applying Theorem \ref{thm:piling-corr-nper}, we conclude that the piling process is one of the bi-infinite sequences $(Z_j)_{j \in \mathbb{Z}}$
$$\left(\dots,\infty,U,U.2\left(\dfrac{1}{2}\right)^{\frac{1}{9}},\infty,U.2^3,\infty,\dots\right)$$
$$\left(\dots,\infty,U,\infty,\dots\right)$$
with probability $\dfrac{1}{5}$, where $U$ is uniformly distributed on $[0,1]$; with probability $\dfrac{3}{5}.\dfrac{2}{3}=\dfrac{2}{5}$ the bi-infinite sequence $(Z_j)_{j \in \mathbb{Z}}$
$$\left(\dots,\infty,U_0,\infty,U_0.2^2.9^{\frac{1}{2}},\infty,\dots\right)$$
where $U_0$ is uniformly distributed on $[0,2/3]$; and with probability $\dfrac{3}{5}.\left(1-\dfrac{2}{3}\right)=\dfrac{1}{5}$
$$\left(\dots,U_1.\dfrac{1}{2}.9^{\frac{1}{2}},U_1,\infty,U_1.2^2.9^{\frac{1}{2}},\infty,\dots\right)$$
where $U_1$ is uniformly distributed on $[2/3,1]$.

Again, in all the sequences, $U$ (resp. $U_0,U_1$) is at index $j=0$ and the entries that are visibly different from $\infty$ are the only such entries.
\end{example}
\begin{remark}\label{rmk:EI-ex-2}
For Example \ref{ex:nper-weighted}, we have
\begin{equation*}
\mu(U_n(\tau))=\displaystyle\sum_{i=1}^{3} \mu(B_{h_i^{-1}(u_n(\tau))}(\xi_i))=2u_n(\tau)^{-\frac{1}{2}}+2.3u_n(\tau)^{-\frac{1}{2}}+2u_n(\tau)^{-\frac{1}{2}}=5.2u_n(\tau)^{-\frac{1}{2}}.
\end{equation*}
Let $q_n=m_3-m_1=3$. From \cite[Corollary 4.5]{16correlated},
\begin{equation*}
\begin{split}
\mu(U_n^{(q_n)}(\tau))&=\mu(B_{h_1^{-1}(u_n(\tau))}(\xi_1))-\frac{1}{2^3}\mu(B_{h_3^{-1}(u_n(\tau))}(\xi_3))\\
&\ +\mu(B_{h_2^{-1}(u_n(\tau))}(\xi_2))-\frac{1}{2^2}\mu(B_{h_3^{-1}(u_n(\tau))}(\xi_3))\\
&\ +\mu(B_{h_3^{-1}(u_n(\tau))}(\xi_3))\\
&=2u_n(\tau)^{-\frac{1}{2}}(1-\frac{1}{2^3}+3-\frac{1}{2^2}+1).
\end{split}
\end{equation*}
Thus, the extremal index is
\begin{equation*}
\begin{split}
\vartheta=\lim_{n \to \infty} \dfrac{\mu(U_n^{(q_n)}(\tau))}{\mu(U_n(\tau))}=\lim_{n \to \infty} \dfrac{2u_n(\tau)^{-\frac{1}{2}}(1-\frac{1}{2^3}+3-\frac{1}{2^2}+1)}{5.2u_n(\tau)^{-\frac{1}{2}}}=\dfrac{37}{40}.
\end{split}
\end{equation*}
\end{remark}

\begin{example}\label{ex:2-exp-nper}
Let $f(x,y)=(2x \mod 1,3y \mod 1)$, $(x,y) \in [0,1]^2$, and $\mu= $ Lebesgue measure on $[0,1]^2$ (invariant for $f$). Take $\zeta=(\zeta_x,\zeta_y)=\left(\dfrac{1}{\sqrt{2}},\dfrac{1}{\sqrt{2}}\right)$ (non-periodic), and define the observable $\Psi$ as
\begin{equation*}
\Psi(x,y):=\begin{cases}
\norm{(x,y)-\zeta}^{-4}\dfrac{(x-\zeta_x,y-\zeta_y)}{\norm{(x,y)-\zeta}}, \ (x,y) \in B_{\varepsilon_1}(\zeta)\\
256\norm{(x,y)-f(\zeta)}^{-4}\dfrac{(x-f(\zeta)_x,y-f(\zeta)_y)}{\norm{(x,y)-f(\zeta)}}, \ (x,y) \in B_{\varepsilon_2}(f(\zeta))\\
0, \ \text{otherwise}
\end{cases}
\end{equation*}
for some $\varepsilon_1,\varepsilon_2>0$, where $f(\zeta)=(f(\zeta)_x,f(\zeta)_y)$. Observe that, presented as in (\ref{eq:psi}),
\begin{equation*}
\Psi(x,y)=\displaystyle\sum_{i=1}^{2} h_i(\text{dist}((x,y),\xi_i))\dfrac{\Phi_{\xi_i}^{-1}((x,y))}{\norm{\Phi_{\xi_i}^{-1}((x,y))}}\mathbf{1}_{B_{\varepsilon_i}(\xi_i)}((x,y))
\end{equation*}
for $\xi_1=\zeta$, $\xi_2=f(\zeta)$, $h_1(t)=t^{-4}$ and $h_2(t)=256t^{-4}$ (so that $\alpha=1/4$), and $\Phi_{\xi_i}^{-1}:B_{\varepsilon_i}(\xi_i) \to B_{\varepsilon_i}(0)$ being the translation by $-\xi_i$ for $i=1,2$. In particular, $\mathcal{M}=\{\zeta,f(\zeta)\}$ and equation (\ref{eq:alpha-hea}) holds with $a_n=\dfrac{289}{256}n^2$.
	
Since $\mu=\text{Lebesgue}$, we have $D_1=D_2=1$. Also, $d=2$ and $c_1=1$ and $c_2=256$. Thus, $p_1=\dfrac{1^{\frac{1}{2}}}{1^{\frac{1}{2}}+256^{\frac{1}{2}}}=\dfrac{1}{17}$ and $p_2=\dfrac{256^{\frac{1}{2}}}{1^{\frac{1}{2}}+256^{\frac{1}{2}}}=\dfrac{16}{17}$.
	
If $i=2$, then $\lambda_{2,1}^{\min}=\dfrac{1}{3}$ and $\lambda_{2,1}^{\max}=\dfrac{1}{2}$, giving $u_{2,1}^{\min}=\left(\dfrac{1}{256}\right)^{\frac{1}{4}}\cdot\dfrac{1}{\left(\frac{1}{3}\right)}=\dfrac{3}{4}$ and\\
$u_{2,1}^{\max}=\left(\dfrac{1}{256}\right)^{\frac{1}{4}}\cdot\dfrac{1}{\left(\frac{1}{2}\right)}=\dfrac{1}{2}$, leading to $A^{(2)}=\{1\}$. Also, we have the increasing order $0 \leq u_{2,1}^{\max} \leq u_{2,1}^{\min} \leq 1$.
	
Notice that $(Df_{\zeta})^j(\theta_x,\theta_y)=(Df_{f(\zeta)})^j(\theta_x,\theta_y)=(2^j\theta_x,3^j\theta_y)$, where $j \in \mathbb{Z}$.
	
Applying Theorem \ref{thm:piling-corr-nper}, we conclude that the piling process is
\begin{enumerate}[(0)]
\item with probability $\dfrac{1}{17}$ the bi-infinite sequence $(Z_j)_{j \in \mathbb{Z}}$ with:\begin{enumerate}[(i)]
\item entry $U.(\Theta_x,\Theta_y)$ at $j=0$
\item entry $U.(2\Theta_x,3\Theta_y)\cdot\dfrac{1}{4}$ at $j=1$
\item $\infty$ for all other positive indices $j$
\item $\infty$ for all negative indices $j$
\end{enumerate}
where $U$ is uniformly distributed on $[0,1]$, $\Theta$ is uniformly distributed on $\mathbb{S}^{1}$, and $U$ and $\Theta$ are independent;
\end{enumerate}
\begin{enumerate}[(I)]
\item with probability $\dfrac{16}{17}\cdot\dfrac{1}{2}=\dfrac{8}{17}$ the bi-infinite sequence $(Z_j)_{j \in \mathbb{Z}}$ with:\begin{enumerate}[(i)]
\item entry $U_0.(\Theta_x,\Theta_y)$ at $j=0$
\item $\infty$ for all positive indices $j$
\item $\infty$ for all negative indices $j$
\end{enumerate}
where $U_0$ is uniformly distributed on $[0,1/2)$, $\Theta$ is uniformly distributed on $\mathbb{S}^{1}$, and $U_0$ and $\Theta$ are independent;
\item with probability $\dfrac{16}{17}\left(\dfrac{3}{4}-\dfrac{1}{2}\right)=\dfrac{4}{17}$ the bi-infinite sequence $(Z_j)_{j \in \mathbb{Z}}$ with:\begin{enumerate}[(i)]
\item entry $U_{1'}.(\Theta_x,\Theta_y)$ at $j=0$
\item $\infty$ for all positive indices $j$
\item entry $U_{1'}.\left(\dfrac{1}{2}\Theta_x,\dfrac{1}{3}\Theta_y\right)\cdot 4$ at $j=-1$
\item $\infty$ for all other negative indices $j$
\end{enumerate}
where $U_{1'}$ is uniformly distributed on $[1/2,3/4)$ and $\Theta \given \{U_{1'}=z\}$ is uniformly distributed on $\left\lbrace (\theta_x,\theta_y) \in \mathbb{S}^{1}:\left\lVert\left(\dfrac{1}{2}\theta_x,\dfrac{1}{3}\theta_y\right)\right\rVert \geq \dfrac{1}{4z} \right\rbrace$;
\item with probability $\dfrac{16}{17}\left(1-\dfrac{3}{4}\right)=\dfrac{4}{17}$ the bi-infinite sequence $(Z_j)_{j \in \mathbb{Z}}$ with:\begin{enumerate}[(i)]
\item entry $U_{1}.(\Theta_x,\Theta_y)$ at $j=0$
\item $\infty$ for all positive indices $j$
\item entry $U_{1}.\left(\dfrac{1}{2}\Theta_x,\dfrac{1}{3}\Theta_y\right)\cdot 4$ at $j=-1$
\item $\infty$ for all other negative indices $j$
\end{enumerate}
where $U_1$ is uniformly distributed on $[3/4,1]$, $\Theta$ is uniformly distributed on $\mathbb{S}^{1}$, and $U_1$ and $\Theta$ are independent.
\end{enumerate}
\end{example}
\begin{remark}\label{rmk:EI-ex-3}
For Example \ref{ex:2-exp-nper}, we have
\begin{equation*}
\mu(U_n(\tau))=\displaystyle\sum_{i=1}^{2} \mu(B_{h_i^{-1}(u_n(\tau))}(\xi_i))=u_n(\tau)^{-\frac{1}{2}}+16u_n(\tau)^{-\frac{1}{2}}=17u_n(\tau)^{-\frac{1}{2}}.
\end{equation*}
Let $q_n=m_2-m_1=1$. From \cite[Corollary 4.5]{16correlated},
\begin{equation*}
\mu(U_n^{(q_n)}(\tau))=\mu(B_{h_2^{-1}(u_n(\tau))}(\xi_2))=16u_n(\tau)^{-\frac{1}{2}}.
\end{equation*}
Thus, the extremal index is
\begin{equation*}
\begin{split}
\vartheta=\lim_{n \to \infty} \dfrac{\mu(U_n^{(q_n)}(\tau))}{\mu(U_n(\tau))}=\lim_{n \to \infty} \dfrac{16u_n(\tau)^{-\frac{1}{2}}}{17u_n(\tau)^{-\frac{1}{2}}}=\dfrac{16}{17}.
\end{split}
\end{equation*}
\end{remark}
\begin{remark}
Case (II) expresses that a range of thresholds $\mathcal{U} \subseteq [u_{2,1}^{max},u_{2,1}^{min})$ determines a range of unit vectors $\mathcal{T} \subseteq \mathbb{S}^{1}$ for which $u.\norm{Df_{f(\zeta)}^{-1}(\theta)}.4 \geq 1$ whenever $u \in \mathcal{U}$ and $\theta \in \mathcal{T}$. For instance, let $\mathcal{U}=[1/2,3/5)$. Then, $u=1/2$ gives us
\begin{equation*}
\norm{Df_{f(\zeta)}^{-1}(\theta)} \geq \dfrac{1}{2} \iff \left\lVert\left(\dfrac{1}{2}\theta_x,\dfrac{1}{3}\theta_y\right)\right\rVert \geq \dfrac{1}{2}.
\end{equation*}
In turn, $u=3/5$ leads to
\begin{equation*}
\norm{Df_{f(\zeta)}^{-1}(\theta)} \geq \dfrac{5}{12} \iff \left\lVert\left(\dfrac{1}{2}\theta_x,\dfrac{1}{3}\theta_y\right)\right\rVert \geq \dfrac{5}{12}.
\end{equation*}
Since $\left\lVert\left(\dfrac{1}{2}\theta_x,\dfrac{1}{3}\theta_y\right)\right\rVert =\dfrac{5}{12}$ has solution $(\theta_x,\theta_y)=\left(\dfrac{3}{2\sqrt{5}},\dfrac{\sqrt{11}}{2\sqrt{5}}\right)$ when $\theta_x,\theta_y>0$, our knowledge of the geometry of $Df_{f(\zeta)}^{-1}:T_{f(\zeta)}[0,1]^2 \to T_{\zeta}[0,1]^2$ leads to the conclusion that
\begin{equation*}
\begin{split}
\theta \in& \left[0,\tan^{-1}\left(\dfrac{\sqrt{11}}{3}\right)\right] \cup \left[\pi-\tan^{-1}\left(\dfrac{\sqrt{11}}{3}\right),\pi\right] \cup \left[\pi,\pi+\tan^{-1}\left(\dfrac{\sqrt{11}}{3}\right)\right]\\
&\cup \left[2\pi-\tan^{-1}\left(\dfrac{\sqrt{11}}{3}\right),2\pi\right].
\end{split}
\end{equation*}
\end{remark}

\subsection{Proof of the main theorem in the non-periodic case}
\begin{proof}[Proof of Theorem \ref{thm:piling-corr-nper}]
Our aim is to obtain the distribution of $(Z_j)_{j \in \mathbb{Z}}$ which is the same as the distribution of $(Y_j)_{j \in \mathbb{Z}}$ conditional on $\displaystyle\inf_{j \leq -1} \norm{Y_j} \geq 1$ (see Definition \ref{def:piling}). We take two main steps the first of which is further split into a couple of sub-steps.

\underline{\textbf{Step 1}} We check that the process $(Y_j)_{j \in \mathbb{Z}}$ is, with probability $p_i$, the bi-infinite sequence with:\begin{enumerate}[(i)]
\item entry $U.\Theta$ at $j=0$;
\item entries $U.Df_{\xi_i}^{m_l-m_i}(\Theta)\left(\dfrac{c_i}{c_l}\right)^{\alpha}$ at $j=m_l-m_i$ for all $l=i+1,\dots,N$;
\item $\infty$ for all other positive indices $j$;
\item $\infty$ for all negative indices $j$ except, possibly, $U.Df_{\xi_i}^{m_l-m_i}(\Theta)\left(\dfrac{c_i}{c_l}\right)^{\alpha}$ at $j=m_l-m_i$ for $l=1,\dots,i-1$;
\end{enumerate}
where $U$ is uniformly distributed on $[0,1]$, $\Theta$ is uniformly distributed on $\mathbb{S}^{d-1}$, and $U$ and $\Theta$ are independent.

Verifying that conditions (2)-(4) in Definition \ref{def:piling} are satisfied for $(Y_j)_{j \in \mathbb{Z}}$ as just described is straightforward, so we check that condition (1) holds.

\underline{\textbf{Sub-step 1.1}} $p_i$ is the probability that an exceedance of the threshold $u_n(\tau)$ by $\norm{\mathbf{X}_{r_n}}$ is due to a hit (at time $r_n$) to the ball around $\xi_i$ of radius $h_i^{-1}(u_n(\tau))$.

Observe that
\begin{equation*}
\{x \in \mathcal{X}: \norm{\mathbf{X}_{r_n}(x)}>u_n(\tau)\}=\left\lbrace x \in \mathcal{X}: f^{r_n}(x) \in \displaystyle\bigcup_{i=1}^{N} B_{h_i^{-1}(u_n(\tau))}(\xi_i) \right\rbrace.
\end{equation*}
We may assume that the union $\displaystyle\bigcup_{i=1}^{N} B_{h_i^{-1}(u_n(\tau))}(\xi_i)$ is disjoint as indeed it is for a sufficiently large $n$. Now, a hit to the union $\displaystyle\bigcup_{i=1}^{N} B_{h_i^{-1}(u_n(\tau))}(\xi_i)$ is indeed a hit to the ball $B_{h_i^{-1}(u_n(\tau))}(\xi_i)$, where $i \in \{1,\dots,N\}$, with probability
\begin{equation*}
p_i=\dfrac{\mu(f^{-r_n}(B_{h_i^{-1}(u_n(\tau))}(\xi_i)))}{\displaystyle\sum_{k=1}^{N}\mu(f^{-r_n}(B_{h_k^{-1}(u_n(\tau))}(\xi_k)))}=\dfrac{\mu(B_{h_i^{-1}(u_n(\tau))}(\xi_i))}{\displaystyle\sum_{k=1}^{N}\mu(B_{h_k^{-1}(u_n(\tau))}(\xi_k))} \sim \dfrac{D_i.\text{Leb}(B_{h_i^{-1}(u_n(\tau))}(\xi_i))}{\displaystyle\sum_{k=1}^{N}D_k.\text{Leb}(B_{h_k^{-1}(u_n(\tau))}(\xi_k))}
\end{equation*}
where the second equality follows by $f$-invariance of $\mu$ and the asymptotic equivalence is derived from \ref{itm:R3} in the Introduction.
In fact, we may further write
\begin{equation}\label{eq:pi}
p_i \sim \dfrac{D_i.(h_i^{-1}(u_n(\tau)))^d}{\displaystyle\sum_{k=1}^{N}D_k.(h_k^{-1}(u_n(\tau)))^d}=\dfrac{D_i\left(\dfrac{u_n(\tau)}{c_i}\right)^{-\alpha d}}{\displaystyle\sum_{k=1}^{N}D_k\left(\dfrac{u_n(\tau)}{c_k}\right)^{-\alpha d}}=\dfrac{D_ic_i^{\alpha d}}{\displaystyle\sum_{k=1}^{N}D_kc_k^{\alpha d}}.
\end{equation}

\underline{\textbf{Sub-step 1.2}} Assume a hit at time $r_n$ to the ball around $\xi_i$ of radius $h_i^{-1}(u_n(\tau))$. Then, $(Y_j)_{j \in \mathbb{Z}}$ is as described by (i)-(iv) in Step 1.

Since $\Psi(x)=\displaystyle\sum_{i=1}^{N}h_i(\text{dist}(x,\xi_i))\dfrac{\Phi^{-1}_{\xi_i}(x)}{\norm{\Phi^{-1}_{\xi_i}(x)}}\mathbf{1}_{W_i}(x)$ and $f^{r_n}(x) \in W_i$,
\begin{equation*}
\dfrac{\mathbf{X}_{r_n}(x)}{\norm{\mathbf{X}_{r_n}(x)}}=\dfrac{\Psi(f^{r_n}(x))}{\norm{\Psi(f^{r_n}(x))}}=\dfrac{\Phi^{-1}_{\xi_i}(f^{r_n}(x))}{\norm{\Phi^{-1}_{\xi_i}(f^{r_n}(x))}}=w.
\end{equation*}
Dropping the dependence on $x$, we have that $\dfrac{\mathbf{X}_{r_n}}{\norm{\mathbf{X}_{r_n}}}=\Theta$. As, in a sufficiently small neighbourhood of $\mathcal{M}$, $\mu$ looks like the Lebesgue measure (recall \ref{itm:R3} in the Introduction) it follows that $\Theta$ is uniformly distributed on $\mathbb{S}^{d-1}$. As $Y_0 \sim \dfrac{u_n^{-1}(\norm{\mathbf{X}_{r_n}})}{\tau}\dfrac{\mathbf{X}_{r_n}}{\norm{\mathbf{X}_{r_n}}}$ and, from \cite[Lemma 3.9]{20enriched}, $\norm{Y_0}$ is uniformly distributed on $[0,1]$, we must have $\dfrac{u_n^{-1}(\norm{\mathbf{X}_{r_n}})}{\tau} \sim U$ where $U$ is uniformly distributed on $[0,1]$. Thus,
\begin{equation*}
Y_0 \sim U.\Theta
\end{equation*}
where $U$ is uniformly distributed on $[0,1]$, $\Theta$ is uniformly distributed on $\mathbb{S}^{d-1}$ and $U$ and $\Theta$ are independent. Therefore, (i) for $(Y_j)_{j \in \mathbb{Z}}$ is satisfied.

Observe that continuity of $f$ at $\xi_i$ guarantees that $f^{r_n+j}(x)$ belongs to a small neighbourhood of $f^{j}(\xi_i)$ as long as $f^{r_n}(x)$ belongs to a sufficiently small neighbourhood of $\xi_i$. We have
\begin{equation}\label{eq:lin-approx}
\text{dist}(f^{r_n+j}(x),f^{j}(\xi_i)) \sim \norm{Df_{\xi_i}^{j}(w)}.\text{dist}(f^{r_n}(x),\xi_i)
\end{equation}
where $w=\dfrac{\Phi_{\xi_i}^{-1}(f^{r_n}(x))}{\norm{\Phi_{\xi_i}^{-1}(f^{r_n}(x))}}$.

Now, since the sequence $\mathbf{X}_0,\mathbf{X}_1,\dots$ has an $\alpha$-regularly varying tail we use the formula $u_n^{-1}(z)=\left(\dfrac{z}{a_n}\right)^{-\alpha}$.

Notice that $f^{m_l-m_i}(\xi_i)=\xi_l$. Thus, $f^{r_n+m_l-m_i}(x)$ belongs to a small neighbourhood of $\xi_l$ and we may write
\begin{equation*}
\norm{\mathbf{X}_{r_n+m_l-m_i}(x)}=h_l(\text{dist}(f^{r_n+m_l-m_i}(x),\xi_l))=h_l(\text{dist}(f^{r_n+m_l-m_i}(x),f^{m_l-m_i}(\xi_i))).
\end{equation*}
So, for all $l=i+1,\dots,N$,
\begin{equation*}
\begin{split}
u_n^{-1}(\norm{\mathbf{X}_{r_n+m_l-m_i}(x)})&=u_n^{-1}(h_l(\text{dist}(f^{r_n+m_l-m_i}(x),f^{m_l-m_i}(\xi_i))))\\
&=\left(\dfrac{h_l(\text{dist}(f^{r_n+m_l-m_i}(x),f^{m_l-m_i}(\xi_i)))}{a_n}\right)^{-\alpha}
\end{split}
\end{equation*}
which, by (\ref{eq:lin-approx}), can be rewritten as
\begin{equation}\label{eq:j-positive-1}
\begin{split}
u_n^{-1}(\norm{\mathbf{X}_{r_n+m_l-m_i}(x)}) &\sim \left(\dfrac{h_l(\norm{Df_{\xi_i}^{m_l-m_i}(w)}.\text{dist}(f^{r_n}(x),\xi_i))}{a_n}\right)^{-\alpha}\\
&=\dfrac{c_l^{-\alpha}\norm{Df_{\xi_i}^{m_l-m_i}(w)}.\text{dist}(f^{r_n}(x),\xi_i)}{a_n^{-\alpha}}.
\end{split}
\end{equation}

In fact, $f^{r_n}(x) \in B_{h_i^{-1}(u_n(\tau))}(\xi_i)$ corresponds to
\begin{equation*}
h_i(\text{dist}(f^{r_n}(x),\xi_i))>u_n(\tau)
\end{equation*}
which together with (\ref{eq:gen-u_n^{-1}}) implies
\begin{equation*}
\tau \geq u_n^{-1}(h_i(\text{dist}(f^{r_n}(x),\xi_i))) \iff \tau=\dfrac{u_n^{-1}(h_i(\text{dist}(f^{r_n}(x),\xi_i)))}{v}
\end{equation*}
where $v \in [0,1]$. Using again the formula for $u_n^{-1}$, it follows
\begin{equation}\label{eq:j-positive-2}
\tau=\dfrac{1}{v}\left(\dfrac{h_i(\text{dist}(f^{r_n}(x),\xi_i))}{a_n}\right)^{-\alpha}=\dfrac{1}{v}\dfrac{c_i^{-\alpha}\text{dist}(f^{r_n}(x),\xi_i)}{a_n^{-\alpha}}.
\end{equation}
Since $\Psi(x)=\displaystyle\sum_{i=1}^{N}h_i(\text{dist}(x,\xi_i))\dfrac{\Phi^{-1}_{\xi_i}(x)}{\norm{\Phi^{-1}_{\xi_i}(x)}}\mathbf{1}_{W_i}(x)$, $f^{r_n}(x) \in W_i$  and $f^{r_n+m_l-m_i}(x) \in W_l$,
\begin{equation}\label{eq:j-positive-3}
\begin{split}
\dfrac{\mathbf{X}_{r_n+m_l-m_i}(x)}{\norm{\mathbf{X}_{r_n+m_l-m_i}(x)}}=\dfrac{\Psi(f^{r_n+m_l-m_i}(x))}{\norm{\Psi(f^{r_n+m_l-m_i}(x))}}&=\dfrac{\Phi^{-1}_{\xi_l}(f^{r_n+m_l-m_i}(x))}{\norm{\Phi^{-1}_{\xi_l}(f^{r_n+m_l-m_i}(x))}}\\
&=\dfrac{Df_{\xi_i}^{m_l-m_i}(\Phi_{\xi_i}^{-1}(f^{r_n}(x))}{\norm{Df_{\xi_i}^{m_l-m_i}(\Phi_{\xi_i}^{-1}(f^{r_n}(x))}}\\
&=\dfrac{Df_{\xi_i}^{m_l-m_i}(w)}{\norm{Df_{\xi_i}^{m_l-m_i}(w)}}.
\end{split}
\end{equation}
Putting together (\ref{eq:j-positive-1}), (\ref{eq:j-positive-2}) and (\ref{eq:j-positive-3}), we conclude that, for all $l=i+1,\dots,N$,
\begin{equation*}
\dfrac{u_n^{-1}(\norm{\mathbf{X}_{r_n+m_l-m_i}(x)})}{\tau}\dfrac{\mathbf{X}_{r_n+m_l-m_i}(x)}{\norm{\mathbf{X}_{r_n+m_l-m_i}(x)}} \sim v\left(\dfrac{c_i}{c_l}\right)^{\alpha}Df_{\xi_i}^{m_l-m_i}(w)
\end{equation*}
where $v \in [0,1]$ and $w \in \mathbb{S}^{d-1}$.

Finally, $v$ and $w$ are attached to a particular observation, labelling with a magnitude and a direction the hit at time $r_n$ to the ball around $\xi_i$ of radius $h_i^{-1}(u_n(\tau))$. Dropping the dependence on $x$, we have
\begin{equation*}
\dfrac{u_n^{-1}(\norm{\mathbf{X}_{r_n+m_l-m_i}})}{\tau}\dfrac{\mathbf{X}_{r_n+m_l-m_i}}{\norm{\mathbf{X}_{r_n+m_l-m_i}}} \sim U.Df_{\xi_i}^{m_l-m_i}(\Theta)\left(\dfrac{c_i}{c_l}\right)^{\alpha}
\end{equation*}
where $U$ is uniformly distributed on $[0,1]$, $\Theta$ is uniformly distributed on $\mathbb{S}^{d-1}$, and $U$ and $\Theta$ are independent. So, (ii) for $(Y_j)_{j \in \mathbb{Z}}$ is satisfied.

For all positive indices $j \notin \{m_l-m_i: l=i+1,\dots,N\}$, we claim that
\begin{equation*}
\lim_{n \to \infty}\dfrac{u_n^{-1}(\norm{\mathbf{X}_{r_n+j}})}{\tau}\dfrac{\mathbf{X}_{r_n+j}}{\norm{\mathbf{X}_{r_n+j}}}=\infty.
\end{equation*}
Suppose otherwise, so that $\displaystyle\lim_{n \to \infty} \dfrac{u_n^{-1}(\norm{\mathbf{X}_{r_n+j}})}{\tau}=c \in \mathbb{R}$ for some positive index $j \notin \{m_l-m_i: l=i+1,\dots,N\}$. Then, at time $r_n+j$ there is a hit to $B_{u_n(\tau')}(\xi_k)$ for some $\tau' \in (0,+\infty)$ and $k \in \{1,\dots,N\}$. As $u_n(\tau) \underset{\scriptsize{n \to \infty}}{\longrightarrow}+\infty$, for any $\tau \in (0,+\infty)$, then $f^{r_n}(x) \underset{\scriptsize{n \to \infty}}{\longrightarrow} \xi_i$ and $f^{r_n+j}(x) \underset{\scriptsize{n \to \infty}}{\longrightarrow} \xi_k$. By continuity of $f$ at $\xi_i$, $f^{r_n+j}(x) \underset{\scriptsize{n \to \infty}}{\longrightarrow} f^j(\xi_i)$ and so $f^j(\xi_i)=\xi_k$. However, by definition, $\xi_k=f^{m_k-m_i}(\xi_i)$ where $k \in \{1,\dots,N\}$. It follows that $\xi_i$ is periodic, thus $\zeta$ is periodic contrary to our assumption. So, (iii) for $(Y_j)_{j \in \mathbb{Z}}$ is satisfied.

We are left to check (iv): $Y_j$ is equal to $\infty$ except, possibly, for a finite number of negative indices $j=m_1-m_i,\dots,m_{i-1}-m_i$. In fact, if the visit to a neighbourhood of $\xi_i$ (at time $r_n$) is preceded by a visit to a neighbourhood of $\xi_{i-1}$ (at time $r_n-(m_i-m_{i-1})$) then, arguing as in the justification of (ii) above,
\begin{equation*}
\dfrac{u_n^{-1}(\norm{\mathbf{X}_{r_n+m_{i-1}-m_i}})}{\tau}\dfrac{\mathbf{X}_{r_n+m_{i-1}-m_i}}{\norm{\mathbf{X}_{r_n+m_{i-1}-m_i}}} \sim U.Df_{\xi_i}^{m_{i-1}-m_i}(\Theta)\left(\dfrac{c_i}{c_{i-1}}\right)^{\alpha}.
\end{equation*}
On the other hand, if the visit to a neighbourhood of $\xi_i$ (at time $r_n$) is preceded by a visit to a neighbourhood of $f^{-(m_i-m_{i-1})}(\xi_i) \setminus \{\xi_{i-1}\}$ (at time $r_n-(m_i-m_{i-1})$) then
\begin{equation*}
\lim_{n \to \infty}\dfrac{u_n^{-1}(\norm{\mathbf{X}_{r_n+m_{i-1}-m_i}})}{\tau}\dfrac{\mathbf{X}_{r_n+m_{i-1}-m_i}}{\norm{\mathbf{X}_{r_n+m_{i-1}-m_i}}}=\infty
\end{equation*}
by definition of the observable $\Psi$ (\textit{i.e.} in a neighbourhood of a $(m_i-m_{i-1})$-th pre-image of $\xi_i$ which is not $\xi_{i-1}$ then $\Psi \equiv 0$). Thus, inductively, we see that among the indices $j=m_1-m_i,\dots,m_{i-1}-m_i$ we can have $\displaystyle\lim_{n \to \infty}\dfrac{u_n^{-1}(\norm{\mathbf{X}_{r_n+j}})}{\tau}\dfrac{\mathbf{X}_{r_n+j}}{\norm{\mathbf{X}_{r_n+j}}}$ different or equal to $\infty$. By analogous reasoning to what was used to claim (iii) just above, we see that the negatively indexed entries corresponding to indices $j \neq m_1-m_i,\dots,m_{i-1}-m_i$ must all be $\infty$.

\underline{\textbf{Step 2}} The distribution of $(Z_j)_{j \in \mathbb{Z}}$ is given by (0)-(IV).

We look at the norms of the negatively indexed entries in $(Y_j)_{j \in \mathbb{Z}}$, more specifically for the entries corresponding to $j=m_l-m_i$ for all $l=1,\dots,i-1$ (as $Y_j=\infty$ for all the other negative indices $j$ as determined by (iv) we have just shown).

First, assume $\lambda_{i,l}^{max}\left(\dfrac{c_i}{c_l}\right)^{\alpha}<1$ for all $l=1,\dots,i-1$. Then, for any $u \in [0,1]$,\\
$u.\norm{Df_{\xi_i}^{m_l-m_i}(\Theta)}\left(\dfrac{c_i}{c_l}\right)^{\alpha}<1$ (a.s.) for all $l=1,\dots,i-1$.
Since $\lambda_{i,l}^{max}\left(\dfrac{c_i}{c_l}\right)^{\alpha}<1$ is equivalent to $u_{i,l}^{max}>1$ (which implies that $u_{i,l}^{min}>1$) then $A^{(i)}=\emptyset$ and $\norm{Y_j} \geq 1$ when $j=m_l-m_i$ and $l=1,\dots,i-1$ can only be so if $Y_j=\infty$. We have dealt with case (0) in the statement of the Proposition.

On the other hand, assume that $\lambda_{i,l}^{min}\left(\dfrac{c_i}{c_l}\right)^{\alpha} \geq 1$ for some $l=1,\dots,i-1$ that we now fix. Then, for some $u \in [0,1]$, $u.\norm{Df_{\xi_i}^{m_l-m_i}(\Theta)}\left(\dfrac{c_i}{c_l}\right)^{\alpha} \geq 1$ (a.s.). In particular, $u_{i,l}^{min} \leq 1$, as in the case where $A^{(i)} \neq \emptyset$. We consider $\#A^{(i)}=2$ as the general case follows analogously. So, we have $u_{i,l_1}^{max} \leq u_{i,l_1}^{min} \leq u_{i,l_2}^{max} \leq u_{i,l_2}^{min} \leq 1$ for $l_1,l_2 \in \{1,\dots,i-1\}$. If $u \in [0,1]$ is such that $u < u_{i,l_1}^{max}$ it follows that $u.\norm{Df_{\xi_i}^{m_l-m_i}(\Theta)}\left(\dfrac{c_i}{c_l}\right)^{\alpha}<1$ (a.s.) at $j=m_l-m_i$ for both $l=l_1$ and $l=l_2$  and, therefore, the entries corresponding to indices $j=m_l-m_i$ for both $l=l_1$ and $l=l_2$ must be $\infty$; since $u < u_{i,l_1}^{max}$ has probability $\mathbb{P}(U < u_{i,l_1}^{max})=u_{i,l_1}^{max}$ ($U$ is uniformly distributed on $[0,1]$), we are in case (I).

If $u \in [0,1]$ is such that $u_{i,l_1}^{min} \leq u < u_{i,l_2}^{max}$ it follows that $u.\norm{Df_{\xi_i}^{m_l-m_i}(\Theta)}\left(\dfrac{c_i}{c_l}\right)^{\alpha} \geq 1$ (a.s.) at $j=m_l-m_i$ for $l=l_1$ but not for $l=l_2$, so that only the entry corresponding to index $j=m_l-m_i$ for $l=l_1$ can be different from $\infty$ (and indeed equal to $u.Df_{\xi_i}^{m_l-m_i}(\Theta)\left(\dfrac{c_i}{c_l}\right)^{\alpha}$ where $u$ is chosen equally likely among the elements of $[u_{i,l_1}^{min},u_{i,l_2}^{max})$); since $u_{i,l_1}^{min} \leq u < u_{i,l_2}^{max}$ has probability $\mathbb{P}(u_{i,l_1}^{min} \leq U < u_{i,l_2}^{max})=u_{i,l_2}^{max}-u_{i,l_1}^{min}$, we are in case (III).

If $u \in [0,1]$ is such that $u \geq u_{i,l_2}^{min}$, then $u.\norm{Df_{\xi_i}^{m_l-m_i}(\Theta)}\left(\dfrac{c_i}{c_l}\right)^{\alpha} \geq 1$ at $j=m_l-m_i$ for both $l=l_1$ and $l=l_2$ leading to the entries corresponding to indices $j=m_l-m_i$ for $l=l_1$ and $l=l_2$ both being different from $\infty$ (and indeed equal to $u.Df_{\xi_i}^{m_l-m_i}(\Theta)\left(\dfrac{c_i}{c_l}\right)^{\alpha}$ where $u$ is chosen equally likely among the elements of $[u_{i,l_2}^{min},1)$); since $u \geq u_{i,l_2}^{min}$ has probability $\mathbb{P}(U \geq u_{i,l_2}^{min})=1-u_{i,l_2}^{min}$, we are in case (IV).

Finally, if $u \in [0,1]$ is such that $u_{i,l_1}^{max} \leq u < u_{i,l_1}^{min}$, which occurs with probability $u_{i,l_1}^{min}-u_{i,l_1}^{max}$, then $u.\norm{Df_{\xi_i}^{m_l-m_i}(w)}\left(\dfrac{c_i}{c_l}\right)^{\alpha} \geq 1$ for all $w \in \mathbb{S}^{d-1}$ such that $\norm{Df_{\xi_i}^{m_l-m_i}(w)} \geq \dfrac{1}{u}\left(\dfrac{c_l}{c_i}\right)^{\alpha}$; thus, $\Theta \given \{U_{1'}=u\}$ is uniformly distributed on\\ $\left\lbrace w \in \mathbb{S}^{d-1}:\norm{Df_{\xi_i}^{m_l-m_i}(w)} \geq \dfrac{1}{u}\left(\dfrac{c_l}{c_i}\right)^{\alpha} \right\rbrace$, for $U_{1'}$ uniformly distributed on $[u_{i,l_1}^{max},u_{i,l_1}^{min})$, and we are in case (II) (the other situation where $u_{i,l_2}^{max} \leq u < u_{i,l_2}^{min}$ is entirely analogous).
\end{proof}
The following corollary justifies why we imposed from the beginning that all the $h_i$ in $\Psi$ have the same index $\alpha$.
\begin{corollary}\label{cor:alphas}
Let $f$, $\mu$ and $\Psi$ be as specified in the Introduction with $\mathcal{M}$ as in \ref{itm:A}. If the $h_i$ (as in (\ref{eq:hi})) were allowed not all with the same $\alpha$ then $\mathbb{P}(S=\xi_i)=p_i$ would define a degenerate random variable $S$.
\end{corollary}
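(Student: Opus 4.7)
The plan is to revisit the asymptotic formula for $p_i$ derived in Sub-step 1.1 of the proof of Theorem \ref{thm:piling-corr-nper}, and to see exactly where the common-$\alpha$ assumption was silently used. There, starting from
$$p_i \sim \dfrac{D_i\,(h_i^{-1}(u_n(\tau)))^d}{\sum_{k=1}^N D_k\,(h_k^{-1}(u_n(\tau)))^d},$$
the cancellation of the factor $u_n(\tau)^{-\alpha d}$ between numerator and denominator was what produced a limit independent of $\tau$ and $n$, namely $D_i c_i^{\alpha d}/\sum_k D_k c_k^{\alpha d}$. With $h_i(x)=c_i x^{-1/\alpha_i}$ for possibly distinct $\alpha_i$, this cancellation no longer takes place, and the dominant balance as $n\to\infty$ will select a strict subset of $\mathcal{M}$.

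Concretely, I would compute $h_i^{-1}(y)=c_i^{\alpha_i} y^{-\alpha_i}$ and substitute to get
$$p_i \sim \dfrac{D_i c_i^{\alpha_i d}\,u_n(\tau)^{-\alpha_i d}}{\sum_{k=1}^N D_k c_k^{\alpha_k d}\,u_n(\tau)^{-\alpha_k d}}.$$
Set $\alpha^{*}:=\min_{k\in\mathcal{I}}\alpha_k$ and $\mathcal{I}^{*}:=\{k\in\mathcal{I}:\alpha_k=\alpha^{*}\}$. Property (1) in Section \ref{subsec:thre} forces $u_n(\tau)\to\infty$ as $n\to\infty$, so after multiplying numerator and denominator by $u_n(\tau)^{\alpha^{*} d}$ every term with $\alpha_k>\alpha^{*}$ contributes a vanishing factor $u_n(\tau)^{-(\alpha_k-\alpha^{*})d}\to 0$. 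Passing to the limit gives $p_i\to 0$ whenever $\alpha_i>\alpha^{*}$, and $p_i\to D_i c_i^{\alpha^{*} d}\big/\sum_{k\in\mathcal{I}^{*}} D_k c_k^{\alpha^{*} d}$ when $\alpha_i=\alpha^{*}$.

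Therefore $\mathbb{P}(S=\xi_i)$ vanishes for every index outside $\mathcal{I}^{*}$, so $S$ fails to distribute over the whole maximal set $\mathcal{M}$; in the generic situation where the minimum is attained at a unique index $i^{*}$ one obtains $p_{i^{*}}=1$ and $S$ is literally constant, which is the degeneracy claimed. There is no real obstacle here: the content reduces to the elementary observation that among finitely many positive powers of $u_n(\tau)\to\infty$, only those with the slowest decay (i.e.\ the smallest $\alpha_k$) survive in the limit. The only point requiring mild care is to carry out the rescaling by $u_n(\tau)^{\alpha^{*} d}$ \emph{before} passing to the limit, rather than attempting to evaluate numerator and denominator separately, and to invoke the divergence $u_n(\tau)\to\infty$ from the definition of the normalising threshold sequence.
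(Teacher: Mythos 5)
Your proposal is correct and follows essentially the same route as the paper: both start from the asymptotic formula (\ref{eq:pi}) for $p_i$, observe that with distinct $\alpha_i$ the powers of $u_n(\tau)$ no longer cancel, and use $u_n(\tau)\to\infty$ to conclude that only the indices with the smallest $\alpha$ survive in the limit. The paper works the two-point, $d=1$ case with $\alpha_1<\alpha_2$ and declares the general case analogous, whereas you carry out the general case directly (including the observation that when the minimum is attained at several indices the mass concentrates on $\mathcal{I}^{*}$ rather than on a single point), which is a slightly more complete rendering of the same argument.
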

\begin{proof}
Let $\mathcal{M}=\{\xi_1,\xi_2\}=\{\zeta,f(\zeta)\}$, with $h_1(x)=c_1x^{-\frac{1}{\alpha_1}}$ and $h_2(x)=c_2x^{-\frac{1}{\alpha_2}}$ where $\alpha_1<\alpha_2$. Assume $d=1$. Then, making use of (\ref{eq:pi}),
\begin{equation*}
\mathbb{P}(S=\xi_1)=p_1 \sim \dfrac{D_1\left(\dfrac{u_n(\tau)}{c_1}\right)^{-\alpha_1}}{D_1\left(\dfrac{u_n(\tau)}{c_1}\right)^{-\alpha_1}+D_2\left(\dfrac{u_n(\tau)}{c_2}\right)^{-\alpha_2}} \sim 1;
\end{equation*}
\begin{equation*}
\mathbb{P}(S=\xi_2)=p_2 \sim \dfrac{D_2\left(\dfrac{u_n(\tau)}{c_2}\right)^{-\alpha_2}}{D_1\left(\dfrac{u_n(\tau)}{c_1}\right)^{-\alpha_1}+D_2\left(\dfrac{u_n(\tau)}{c_2}\right)^{-\alpha_2}} \sim 0.
\end{equation*}
The general case follows analogously.
\end{proof}

\subsection{Periodic case}
\begin{theorem}\label{thm:piling-corr-per}
Let $f$, $\mu$ and $\Psi$ be as specified in Section \ref{sec:intro} with $\mathcal{M}$ as in \ref{itm:A}, where $\zeta$ is a periodic point of prime period $q$. Assume that $(\mathbf{X}_n)_{n \in \mathbb{N}_0}$ has an $\alpha$-regularly varying tail, where $\alpha \in (0,1)$. Additionally, assume that $f$ is uniformly expanding along the orbit of $\zeta$. For all $i \in \mathcal{I}$, define $p_i=\dfrac{D_ic_i^{\alpha d}}{\sum_{k=1}^{N}D_kc_k^{\alpha d}}$. Let $A^{(i)}$ be as defined in \ref{itm:A2}. If $A^{(i)}=\emptyset$, the piling process is 
\begin{enumerate}[(0)]
\item\label{itm:(0)-piling-fin-per} with probability $p_i$ the bi-infinite sequence $(Z_j)_{j \in \mathbb{Z}}$ with:
\begin{enumerate}[(i)]
\item entry $U.\Theta$ at $j=0$;
\item entries $U.Df_{\xi_i}^{j}(\Theta)\left(\dfrac{c_i}{c_l}\right)^{\alpha}$ at $j=m_l-m_i+qs$ for all $l \in \mathcal{I}$ and $s \in \mathbb{N}_0$ such that $m_l-m_i+qs>0$;
\item $\infty$ for all other positive indices $j$;
\item $\infty$ for all negative indices $j$;
\end{enumerate}
where $U$ is uniformly distributed on $[0,1]$, $\Theta$ is uniformly distributed on $\mathbb{S}^{d-1}$, and $U$ and $\Theta$ are independent.
\end{enumerate}
If $A^{(i)} \neq \emptyset$, assume there exists an increasing order
\begin{equation*}
0 \leq u_{i,l_1}^{max} \leq u_{i,l_1}^{min} \leq u_{i,l_2}^{max} \leq u_{i,l_2}^{min} \leq \dots \leq u_{i,l_{\#A^{(i)}}}^{max} \leq u_{i,l_{\#A^{(i)}}}^{min} \leq 1
\end{equation*}
and for $u_{i,l_p}^{min}=u_{i,l,s}^{min}$ (resp. $u_{i,l_p}^{max}=u_{i,l,s}^{max}$), $p \in \{1,\dots,\#A^{(i)}\}$, let $\rho(u_{i,l_p}^{min}):=m_l-m_i-qs$ and $\rho(u_{i,l_p}^{max}):=m_l-m_i-qs$, so that we abbreviate to $\rho(u_{i,l_p}):=m_l-m_i-qs$. Then, the piling process is
\begin{enumerate}[(I)]
\item with probability $p_iu_{i,l_1}^{max}$ the bi-infinite sequence $(Z_j)_{j \in \mathbb{Z}}$ with:\begin{enumerate}[(i)]
\item entry $U_0.\Theta$ at $j=0$;
\item entries $U_0.Df_{\xi_i}^{j}(\Theta)\left(\dfrac{c_i}{c_l}\right)^{\alpha}$ at $j=m_l-m_i+qs$ for all $l \in \mathcal{I}$ and $s \in \mathbb{N}_0$ such that $m_l-m_i+qs>0$;
\item $\infty$ for all other positive indices $j$;
\item $\infty$ for all negative indices $j$;
\end{enumerate}
where $U_0$ is uniformly distributed on $[0,u_{i,l_1}^{max})$, $\Theta$ is uniformly distributed on $\mathbb{S}^{d-1}$, and $U_0$ and $\Theta$ are independent;
\item with probability $p_i(u_{i,l_p}^{min}-u_{i,l_p}^{max})$, where $p \in \{1,\dots,\#A^{(i)}\}$, the bi-infinite sequence $(Z_j)_{j \in \mathbb{Z}}$ with:\begin{enumerate}[(i)]
\item entry $U_{p'}.\Theta$ at $j=0$;
\item entries $U_{p'}.Df_{\xi_i}^{j}(\Theta)\left(\dfrac{c_i}{c_l}\right)^{\alpha}$ at $j=m_l-m_i+qs$ for all $l \in \mathcal{I}$ and $s \in \mathbb{N}_0$ such that $m_l-m_i+qs>0$;
\item $\infty$ for all other positive indices $j$;
\item entries $U_{p'}.Df_{\xi_i}^{j}(\Theta)\left(\dfrac{c_i}{c_l}\right)^{\alpha}$ at $j=\rho(u_{i,l_1}),\dots,\rho(u_{i,l_p})$;
\item $\infty$ for all other negative indices $j$;
\end{enumerate}
where $U_{p'}$ is uniformly distributed on $[u_{i,l_p}^{max},u_{i,l_p}^{min})$ and $\Theta \given \{U_{p'}=u\}$ is uniformly distributed on $\left\lbrace w \in \mathbb{S}^{d-1}:\norm{Df_{\xi_i}^{\rho(u_{i,l_p})}(w)} \geq \dfrac{1}{u}\left(\dfrac{c_{l_p}}{c_i}\right)^{\alpha} \right\rbrace$;
\item with probability $p_i(u_{i,l_{p+1}}^{max}-u_{i,l_p}^{min})$, where $p \in \{1,\dots,\#A^{(i)}-1\}$, the bi-infinite sequence $(Z_j)_{j \in \mathbb{Z}}$ with:\begin{enumerate}[(i)]
\item entry $U_p.\Theta$ at $j=0$;
\item entries $U_p.Df_{\xi_i}^{j}(\Theta)\left(\dfrac{c_i}{c_l}\right)^{\alpha}$ at $j=m_l-m_i+qs$ for all $l \in \mathcal{I}$ and $s \in \mathbb{N}_0$ such that $m_l-m_i+qs>0$;
\item $\infty$ for all other positive indices $j$;
\item entries $U_p.Df_{\xi_i}^{j}(\Theta)\left(\dfrac{c_i}{c_l}\right)^{\alpha}$ at $j=\rho(u_{i,l_1}),\dots,\rho(u_{i,l_p})$;
\item $\infty$ for all other negative indices $j$;
\end{enumerate}
where $U_p$ is uniformly distributed on $[u_{i,l_p}^{min},u_{i,l_{p+1}}^{max})$, $\Theta$ is uniformly distributed on $\mathbb{S}^{d-1}$, and $U_p$ and $\Theta$ are independent;
\item with probability $p_i.(1-u_{i,l_{\#A^{(i)}}}^{min})$ the bi-infinite sequence $(Z_j)_{j \in \mathbb{Z}}$ with:\begin{enumerate}[(i)]
\item entry $U_{\#A^{(i)}}.\Theta$ at $j=0$;
\item entries $U_{\#A^{(i)}}.Df_{\xi_i}^{j}(\Theta)\left(\dfrac{c_i}{c_l}\right)^{\alpha}$ at $j=m_l-m_i+qs$ for all $l \in \mathcal{I}$ and $s \in \mathbb{N}_0$ such that $m_l-m_i+qs>0$;
\item $\infty$ for all other positive indices $j$;
\item entries $U_{\#A^{(i)}}.Df_{\xi_i}^{j}(\Theta)\left(\dfrac{c_i}{c_l}\right)^{\alpha}$ at $j$ for all $j \in A^{(i)}$;
\item $\infty$ for all other negative indices $j$;
\end{enumerate}
where $U_{\#A^{(i)}}$ is uniformly distributed on $[u_{i,l_{\#A^{(i)}}}^{min},1]$, $\Theta$ is uniformly distributed on $\mathbb{S}^{d-1}$, and $U_{\#A^{(i)}}$ and $\Theta$ are independent.
\end{enumerate}
\end{theorem}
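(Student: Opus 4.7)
The plan is to mirror the two-step proof of Theorem \ref{thm:piling-corr-nper}, with periodicity of $\zeta$ (and hence of each $\xi_i$) being the one structural change that propagates throughout.

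First I would establish the distribution of the auxiliary process $(Y_j)_{j\in\mathbb{Z}}$ from Definition \ref{def:piling}, conditional on the exceedance at time $r_n$ being caused by a hit to $B_{h_i^{-1}(u_n(\tau))}(\xi_i)$. Sub-step 1.1 (computation of $p_i$) goes through verbatim as in the non-periodic case, since it depends only on the invariance of $\mu$ and on \ref{itm:R3}. Sub-step 1.2, which identifies the entries of $(Y_j)_{j\in\mathbb{Z}}$, uses the same linear approximation $\text{dist}(f^{r_n+j}(x),f^{j}(\xi_i))\sim\norm{Df_{\xi_i}^{j}(w)}\cdot\text{dist}(f^{r_n}(x),\xi_i)$, combined with the $\alpha$-regular variation formula $u_n^{-1}(z)=(z/a_n)^{-\alpha}$, to show that whenever $f^{r_n+j}(x)\in W_l$ one gets
\begin{equation*}
\dfrac{u_n^{-1}(\norm{\mathbf{X}_{r_n+j}})}{\tau}\dfrac{\mathbf{X}_{r_n+j}}{\norm{\mathbf{X}_{r_n+j}}} \sim U\cdot Df_{\xi_i}^{j}(\Theta)\left(\dfrac{c_i}{c_l}\right)^{\alpha}.
\end{equation*}
The key new ingredient is that now $\xi_i$ is periodic of period (dividing) $q$, so $f^{j}(\xi_i)=\xi_l$ whenever $j=m_l-m_i+qs$ for some $s\in\mathbb{Z}$ and some $l\in\mathcal{I}$. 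Hence, for positive $j$, the set of indices producing a non-$\infty$ entry is exactly $\{m_l-m_i+qs>0:l\in\mathcal{I},\,s\in\mathbb{N}_0\}$, which accounts for item (ii) of each case. Items (i) and (iii) follow by the same argument as in the non-periodic case (using continuity of $f$ at $\xi_i$ together with the observation that if $f^j(\xi_i)\notin\mathcal{M}$ then $\Psi$ vanishes identically in a neighbourhood).

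Next I would handle the negative indices, where the uniform expansion of $f$ along the orbit of $\zeta$ is crucial. For $j<0$ of the form $j=m_l-m_i-qs$ with $s\in\mathbb{N}_0$, the relevant contracted norm $\norm{Df_{\xi_i}^{j}(w)}\cdot(c_i/c_l)^{\alpha}$ is compared to $1$; uniform expansion along the orbit forces $\norm{Df_{\xi_i}^{-qs}}\to 0$ as $s\to\infty$, so $u_{i,l,s}^{min}>1$ for all but finitely many $(l,s)$. This guarantees $A^{(i)}$ is finite and that all other negative entries are forced to be $\infty$ by the same ``fake expansion fails'' argument used in Step 1 of the non-periodic proof. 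Thus $(Y_j)_{j\in\mathbb{Z}}$ has exactly the form claimed up to the possibility of non-$\infty$ entries at the finitely many indices in $A^{(i)}$.

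For Step 2, I would condition on $\inf_{j\leq -1}\norm{Y_j}\geq 1$. Given the increasing ordering $0\leq u_{i,l_1}^{max}\leq u_{i,l_1}^{min}\leq\dots\leq u_{i,l_{\#A^{(i)}}}^{min}\leq 1$, the argument is identical to that of the non-periodic case: partitioning the support $[0,1]$ of $U=\norm{Y_0}$ into the subintervals $[0,u_{i,l_1}^{max})$, $[u_{i,l_p}^{max},u_{i,l_p}^{min})$, $[u_{i,l_p}^{min},u_{i,l_{p+1}}^{max})$, and $[u_{i,l_{\#A^{(i)}}}^{min},1]$ yields cases (I)–(IV) respectively, with the probabilities read off from the lengths of these intervals multiplied by $p_i$. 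On the transition intervals $[u_{i,l_p}^{max},u_{i,l_p}^{min})$, the direction variable $\Theta$ must be re-conditioned to the spherical cap $\{w\in\mathbb{S}^{d-1}:\norm{Df_{\xi_i}^{\rho(u_{i,l_p})}(w)}\geq u^{-1}(c_{l_p}/c_i)^{\alpha}\}$ (empty in dimension one, which recovers Remark \ref{rmk:equal-u-maxmin-1}). The symbol $\rho$ simply records \emph{which} negative time index is to be promoted from $\infty$ to the formula $U_{p'}\cdot Df_{\xi_i}^{\rho(u_{i,l_p})}(\Theta)(c_i/c_{l_p})^{\alpha}$.

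The main obstacle, compared to the non-periodic case, is the bookkeeping: one must show both that the positively indexed non-$\infty$ entries occur at \emph{all} times $m_l-m_i+qs>0$ (and not just for $l>i$), and that only finitely many negatively indexed positions can escape being $\infty$. Both are consequences of periodicity combined with uniform expansion, but the indexing via $\rho$ and the finiteness of $A^{(i)}$ deserve explicit justification before the partitioning-of-$[0,1]$ argument of Step 2 can be applied verbatim.
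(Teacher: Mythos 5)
Your proposal is correct and follows essentially the same route as the paper: the paper's own proof likewise mirrors the two steps of Theorem \ref{thm:piling-corr-nper}, replacing the index set of non-$\infty$ entries by $\{m_l-m_i\pm qs\}$ via periodicity, using uniform expansion along the orbit to force the a.s.\ divergence condition and the finiteness of $A^{(i)}$, and then repeating the partition of $[0,1]$ into the intervals determined by the $u_{i,l_p}^{\min,\max}$. If anything, your write-up makes explicit (e.g.\ the decay $\norm{Df_{\xi_i}^{-qs}}\to 0$ giving finiteness of $A^{(i)}$, and the role of the prime period in ruling out other positive indices) some points the paper only gestures at with ``analogous''.
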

\begin{proof}
Recall that $(Z_j)_{j \in \mathbb{Z}}$ has the distribution of $(Y_j)_{j \in \mathbb{Z}}$ conditional on $\displaystyle\inf_{j \leq -1} \norm{Y_j} \geq 1$ (see Definition \ref{def:piling}). The proof follows analogously to the proof of Theorem \ref{thm:piling-corr-nper} so we write down the same steps highlighting the differences that arise from the fact that $\zeta$ is now periodic (of prime period $q$).

\underline{\textbf{Step 1}} We check that the process $(Y_j)_{j \in \mathbb{Z}}$ is, with probability $p_i$, the bi-infinite sequence with:
\begin{enumerate}[(i)]
\item entry $U.\Theta$ at $j=0$;
\item entries $U.Df_{\xi_i}^{j}(\Theta)\left(\dfrac{c_i}{c_l}\right)^{\alpha}$ at $j=m_l-m_i+qs$, for all $l \in \mathcal{I}$ and $s \in \mathbb{N}_0$ such that $m_l-m_i+qs>0$;
\item $\infty$ for all other positive indices $j$;
\item $\infty$ for all negative indices $j$ except, possibly, $U.Df_{\xi_i}^{j}(\Theta)\left(\dfrac{c_i}{c_l}\right)^{\alpha}$ at $j=m_l-m_i-qs$, for $l \in \mathcal{I}$ and $s \in \mathbb{N}_0$ such that $m_l-m_i-qs<0$;
\end{enumerate}
where $U$ is uniformly distributed on $[0,1]$, $\Theta$ is uniformly distributed on $\mathbb{S}^{d-1}$, and $U$ and $\Theta$ are independent.

Verifying that condition (2) in Definition \ref{def:piling} is satisfied for $(Y_j)_{j \in \mathbb{Z}}$ as just described is straightforward. The (a.s.) positive exponential growth of $\norm{Df_{\xi_i}^j(\Theta)}$ for positive $j$ as well as the (a.s.) negative exponential growth of $\norm{Df_{\xi_i}^{j}(\Theta)}$ for negative $j$ lead to (3) being satisfied. Again, the (a.s.) negative exponential growth of $\norm{Df_{\xi_i}^{j}(\Theta)}$ for negative $j$ results in $\norm{Y_j} \leq 1$ for all but a finite number of negative indices $j$ so that (4) is satisfied.

We are left to check that condition (1) in Definition \ref{def:piling} holds.

\underline{\textbf{Sub-step 1.1}} $p_i$ is the probability that an exceedance of the threshold $u_n(\tau)$ by $\norm{\mathbf{X}_{r_n}}$ is due to a hit (at time $r_n$) to the ball around $\xi_i$ of radius $h_i^{-1}(u_n(\tau))$.

From the exact same reasoning as in Sub-step 1.1 in the proof of Theorem \ref{thm:piling-corr-nper}: what matters is how much a neighbourhood around $\xi_i$ (corresponding to the exceedance of a threshold $u_n(\tau)$) weighs in a neighbourhood of $\mathcal{M}$ (corresponding to the exceedance of the same $u_n(\tau)$), which is not dependent on $\zeta$ being or not periodic.

\underline{\textbf{Sub-step 1.2}} Assume a hit, at time $r_n$, to the ball around $\xi_i$ of radius $h_i^{-1}(u_n(\tau))$. Then, $(Y_j)_{j \in \mathbb{Z}}$ is as described by (i)-(iv) in Step 1.

In fact, if $\zeta$ is periodic of prime period $q$ and a hit to a neighbourhood of a certain $\xi_i$, corresponding to the exceedance of a threshold $u_n(\tau)$, occurs at time $r_n$, then hits to neighbourhoods of the same $\xi_i$ will occur at times $r_n+qs$ and at times $r_n-qs$, for all $s \in \mathbb{N}_0$. The rest follows analogously to Sub-step 1.2 in the proof of Theorem \ref{thm:piling-corr-nper} - we point out that in (iii) the contradiction is with the minimality of $q$.

\underline{\textbf{Step 2}} The distribution of $(Z_j)_{j \in \mathbb{Z}}$ is given by (0)-(IV).

This is analogous to Theorem \ref{thm:piling-corr-nper}, accounting for the changes in $(Y_j)_{j \in \mathbb{Z}}$ as discussed in Step 1.
\end{proof}

\begin{example}\label{ex:2-exp}
Let $f(x,y)=(2x \mod 1,3y \mod 1)$, $(x,y) \in [0,1]^2$, and $\mu= $ Lebesgue measure on $[0,1]^2$ (invariant for $f$). Take $\zeta=(\zeta_x,\zeta_y)=\left(\dfrac{1}{7},0\right)$ (periodic of period 3), and define the observable $\Psi$ as
\begin{equation*}
\Psi(x,y):=\begin{cases}
\norm{(x,y)-\zeta}^{-4}\dfrac{(x-\zeta_x,y-\zeta_y)}{\norm{(x,y)-\zeta}}, \ (x,y) \in B_{\varepsilon_1}(\zeta)\\
256\norm{(x,y)-f(\zeta)}^{-4}\dfrac{(x-f(\zeta)_x,y-f(\zeta)_y)}{\norm{(x,y)-f(\zeta)}}, \ (x,y) \in B_{\varepsilon_2}(f(\zeta))\\
0, \ \text{otherwise}
\end{cases}
\end{equation*}
for some $\varepsilon_1,\varepsilon_2>0$, where $f(\zeta)=(f(\zeta)_x,f(\zeta)_y)$. Observe that, presented as in (\ref{eq:psi}),
\begin{equation*}
\Psi(x,y)=\displaystyle\sum_{i=1}^{2} h_i(\text{dist}((x,y),\xi_i))\dfrac{\Phi_{\xi_i}^{-1}((x,y))}{\norm{\Phi_{\xi_i}^{-1}((x,y))}}\mathbf{1}_{B_{\varepsilon_i}(\xi_i)}((x,y))
\end{equation*}
for $\xi_1=\zeta$, $\xi_2=f(\zeta)$, $h_1(t)=t^{-4}$ and $h_2(t)=256t^{-4}$ (so that $\alpha=1/4$), and $\Phi_{\xi_i}^{-1}:B_{\varepsilon_i}(\xi_i) \to B_{\varepsilon_i}(0)$ being the translation by $-\xi_i$ for $i=1,2$. In particular, $\mathcal{M}=\{\zeta,f(\zeta)\}$ and equation (\ref{eq:alpha-hea}) holds with $a_n=\dfrac{289}{256}n^2$.

Since $\mu=\text{Lebesgue}$, we have $D_1=D_2=1$. Also, $d=2$ and $c_1=1$ and $c_2=256$. Thus, $p_1=\dfrac{1^{\frac{1}{2}}}{1^{\frac{1}{2}}+256^{\frac{1}{2}}}=\dfrac{1}{17}$ and $p_2=\dfrac{256^{\frac{1}{2}}}{1^{\frac{1}{2}}+256^{\frac{1}{2}}}=\dfrac{16}{17}$.

If $i=1$ then, for all $s \in \mathbb{N}$, $\lambda_{1,2,s}^{\min}=\dfrac{1}{3^{2s}}$, $\lambda_{1,2,s}^{\max}=\dfrac{1}{2^{2s}}$ and $\lambda_{1,1,s}^{\min}=\dfrac{1}{3^{3s}}$ $\lambda_{1,1,s}^{\max}=\dfrac{1}{2^{3s}}$, giving $u_{1,2,s}^{\min}=\left(\dfrac{256}{1}\right)^{\frac{1}{4}}\dfrac{1}{\left(\frac{1}{3^{2s}}\right)}=4.3^{2s}$, $u_{1,2,s}^{\max}=\left(\dfrac{256}{1}\right)^{\frac{1}{4}}\dfrac{1}{\left(\frac{1}{2^{2s}}\right)}=4.2^{2s}$ and $u_{1,1,s}^{\min}=\left(\dfrac{1}{1}\right)^{\frac{1}{4}}\dfrac{1}{\left(\frac{1}{3^{3s}}\right)}=4.3^{3s}$, $u_{1,1,s}^{\max}=\left(\dfrac{1}{1}\right)^{\frac{1}{4}}\dfrac{1}{\left(\frac{1}{2^{3s}}\right)}=4.2^{3s}$, leading to $A^{(1)}=\emptyset$.

If $i=2$ then, $\lambda_{2,1,0}^{\min}=\dfrac{1}{3}$ and $\lambda_{2,1,0}^{\max}=\dfrac{1}{2}$, giving $u_{2,1,0}^{\min}=\left(\dfrac{1}{256}\right)^{\frac{1}{4}}\dfrac{1}{\left(\frac{1}{3}\right)}=\dfrac{3}{4}$ and $u_{2,1,0}^{\max}=\left(\dfrac{1}{256}\right)^{\frac{1}{4}}\dfrac{1}{\left(\frac{1}{2}\right)}=\dfrac{1}{2}$. Also, for all $s \in \mathbb{N}$, $\lambda_{2,2,s}^{\min}=\dfrac{1}{3^{3s}}$, $\lambda_{2,2,s}^{\max}=\dfrac{1}{2^{3s}}$ and $\lambda_{2,1,s}^{\min}=\dfrac{1}{3^{3s+1}}$, $\lambda_{2,1,s}^{\max}=\dfrac{1}{2^{3s+1}}$, giving $u_{2,2,s}^{\min}=\left(\dfrac{256}{256}\right)^{\frac{1}{4}}\dfrac{1}{\left(\frac{1}{3^{3s}}\right)}=3^{3s}$, $u_{2,2,s}^{\max}=\left(\dfrac{256}{256}\right)^{\frac{1}{4}}\dfrac{1}{\left(\frac{1}{2^{3s}}\right)}=2^{3s}$ and $u_{2,1,s}^{\min}=\left(\dfrac{1}{256}\right)^{\frac{1}{4}}\dfrac{1}{\left(\frac{1}{3^{3s+1}}\right)}=\dfrac{3^{3s+1}}{4}$, $u_{2,1,s}^{\max}=\left(\dfrac{1}{256}\right)^{\frac{1}{4}}\dfrac{1}{\left(\frac{1}{2^{3s+1}}\right)}=\dfrac{2^{3s+1}}{4}$. Thus, $A^{(2)}=\{-1\}$, and we have the increasing order $0 \leq u_{2,1,0}^{\max} \leq u_{2,1,0}^{\min} \leq 1$.

Notice that $(Df_{\zeta})^j(\theta_x,\theta_y)=(Df_{f(\zeta)})^j(\theta_x,\theta_y)=(2^j\theta_x,3^j\theta_y)$, where $j \in \mathbb{Z}$.

Applying Theorem \ref{thm:piling-corr-per}, we conclude that the piling process is
\begin{enumerate}[(0)]
\item with probability $\dfrac{1}{17}$ the bi-infinite sequence $(Z_j)_{j \in \mathbb{Z}}$ with:\begin{enumerate}[(i)]
\item entry $U.(\Theta_x,\Theta_y)$ at $j=0$
\item entries $U.(2^j\Theta_x,3^j\Theta_y)\cdot\left(\dfrac{1}{c_l}\right)^{\alpha}$ at $j=m_l+qs$ for all $l \in \mathcal{I}$ and $s \in \mathbb{N}_0$ such that $m_l+qs>0$
\item $\infty$ for all other positive indices $j$
\item $\infty$ for all negative indices $j$
\end{enumerate}
where $U$ is uniformly distributed on $[0,1]$, $\Theta$ is uniformly distributed on $\mathbb{S}^{1}$, and $U$ and $\Theta$ are independent;
\end{enumerate}
\begin{enumerate}[(I)]
\item with probability $\dfrac{16}{17}\cdot\dfrac{1}{2}=\dfrac{8}{17}$ the bi-infinite sequence $(Z_j)_{j \in \mathbb{Z}}$ with:\begin{enumerate}[(i)]
\item entry $U_0.(\Theta_x,\Theta_y)$ at $j=0$
\item entries $U_0.(2^j\Theta_x,3^j\Theta_y)\cdot\left(\dfrac{256}{c_l}\right)^{\alpha}$ at $j=m_l-m_2+qs$ for all $l \in \mathcal{I}$ and $s \in \mathbb{N}_0$ such that $m_l-m_2+qs>0$
\item $\infty$ for all other positive indices $j$
\item $\infty$ for all negative indices $j$
\end{enumerate}
where $U_0$ is uniformly distributed on $[0,1/2)$, $\Theta$ is uniformly distributed on $\mathbb{S}^{1}$, and $U_0$ and $\Theta$ are independent;
\item with probability $\dfrac{16}{17}\left(\dfrac{3}{4}-\dfrac{1}{2}\right)=\dfrac{4}{17}$ the bi-infinite sequence $(Z_j)_{j \in \mathbb{Z}}$ with:\begin{enumerate}[(i)]
\item entry $U_{1'}.(\Theta_x,\Theta_y)$ at $j=0$
\item entries $U_{1'}.(2^j\Theta_x,3^j\Theta_y)\cdot\left(\dfrac{256}{c_l}\right)^{\alpha}$ at $j=m_l-m_2+qs$ for all $l \in \mathcal{I}$ and $s \in \mathbb{N}_0$ such that $m_l-m_2+qs>0$
\item entry $U_{1'}.\left(\dfrac{1}{2}\Theta_x,\dfrac{1}{3}\Theta_y\right)\cdot 4$ at $j=-1$
\item $\infty$ for all other negative indices $j$
\end{enumerate}
where $U_{1'}$ is uniformly distributed on $[1/2,3/4)$ and $\Theta \given \{U_{1'}=z\}$ is uniformly distributed on $\left\lbrace (\theta_x,\theta_y) \in \mathbb{S}^{1}:\left\lVert\left(\dfrac{1}{2}\theta_x,\dfrac{1}{3}\theta_y\right)\right\rVert \geq \dfrac{1}{4z} \right\rbrace$;
\item with probability $\dfrac{16}{17}\left(1-\dfrac{3}{4}\right)=\dfrac{4}{17}$ the bi-infinite sequence $(Z_j)_{j \in \mathbb{Z}}$ with:\begin{enumerate}[(i)]
\item entry $U_{1}.(\Theta_x,\Theta_y)$ at $j=0$
\item entries $U_{1}.(2^j\Theta_x,3^j\Theta_y)\cdot\left(\dfrac{256}{c_l}\right)^{\alpha}$ at $j=m_l-m_2+qs$ for all $l \in \mathcal{I}$ and $s \in \mathbb{N}_0$ such that $m_l-m_2+qs>0$;
\item entry $U_{1}.\left(\dfrac{1}{2}\Theta_x,\dfrac{1}{3}\Theta_y\right)\cdot 4$ at $j=-1$
\item $\infty$ for all other negative indices $j$
\end{enumerate}
where $U_1$ is uniformly distributed on $[3/4,1]$, $\Theta$ is uniformly distributed on $\mathbb{S}^{1}$, and $U_1$ and $\Theta$ are independent.
\end{enumerate}
\end{example}
\begin{remark}\label{rmk:EI-ex-4}
For Example \ref{ex:2-exp}, we have
\begin{equation*}
\mu(U_n(\tau))=\displaystyle\sum_{i=1}^{2} \mu(B_{h_i^{-1}(u_n(\tau))}(\xi_i))=u_n(\tau)^{-\frac{1}{2}}+16u_n(\tau)^{-\frac{1}{2}}=17u_n(\tau)^{-\frac{1}{2}}.
\end{equation*}
Let $q_n=m_2+3-m_1=4$. From \cite[Corollary 5.4]{16correlated},
\begin{equation*}
\begin{split}
\mu(U_n^{(q_n)}(\tau))&=\mu(B_{h_1^{-1}(u_n(\tau))}(\xi_1))-\dfrac{1}{\det(Df_{\xi_1}^{-4})}\mu(B_{h_2^{-1}(u_n(\tau))}(\xi_2))\\
&\ +\mu(B_{h_2^{-1}(u_n(\tau))}(\xi_2)-\dfrac{1}{\det(Df_{\xi_2}^{-3})}\mu(B_{h_2^{-1}(u_n(\tau))}(\xi_2))\\
&=u_n(\tau)^{-\frac{1}{2}}-\frac{1}{16 \times 81}16u_n(\tau)^{-\frac{1}{2}}+16u_n(\tau)^{-\frac{1}{2}}-\frac{1}{8 \times 27}16u_n(\tau)^{-\frac{1}{2}}.
\end{split}
\end{equation*}
Thus, the extremal index is
\begin{equation*}
\begin{split}
\vartheta&=\lim_{n \to \infty} \dfrac{\mu(U_n^{(q_n)}(\tau))}{\mu(U_n(\tau))}\\
&=\lim_{n \to \infty} \dfrac{u_n(\tau)^{-\frac{1}{2}}-\frac{1}{16 \times 81}16u_n(\tau)^{-\frac{1}{2}}+16u_n(\tau)^{-\frac{1}{2}}-\frac{1}{8 \times 27}16u_n(\tau)^{-\frac{1}{2}}}{u_n(\tau)^{-\frac{1}{2}}+16u_n(\tau)^{-\frac{1}{2}}}=\dfrac{1370}{1377}.
\end{split}
\end{equation*}
\end{remark}

\section{A countable number of points in the same orbit}\label{sec:corr-count}
Now we consider $\mathcal{M}=\{\xi_1,\xi_2,\dots\}=\{\xi_i\}_{i \in \mathbb{N}}$ such that there exist $m_i$, $i \in \mathbb{N}$, with $\xi_i=f^{m_i}(\zeta)$, where $\zeta \in \mathcal{X}$, and $\xi_0=\lim_{i \to \infty} \xi_i$. Again, we take $m_1=0$ (\textit{i.e.} $\xi_1=\zeta$). We explore the case in which a sufficiently small neighbourhood of $\mathcal{M}$ (corresponding to the exceedance of a sufficiently high threshold $u_n(\tau)$) is a countable union of non-overlapping balls centred at each of the $\xi_i$, $i \in \mathbb{N}$. As an application of Theorem \ref{thm:piling-corr-count}, we compute the piling process for a modified version of Example 4.5 of \cite{17correlated}.
\begin{theorem}\label{thm:piling-corr-count}
Let $f$, $\mu$ and $\Psi$ be as specified in Section \ref{sec:intro} with $\mathcal{M}$ as in \ref{itm:B}. Assume that $(\mathbf{X}_n)_{n \in \mathbb{N}_0}$ has an $\alpha$-regularly varying tail, where $\alpha \in (0,1)$. Additionally, assume that $f$ is uniformly expanding along the orbit of $\zeta$. Let $U_n(\tau)=\displaystyle\bigcup_{i=1}^{\infty}B_{h_i^{-1}(u_n(\tau))}(\xi_i)$. Assume that there exists $(N(n))_{n \in \mathbb{N}}$ such that $\displaystyle\lim_{n \to \infty} N(n)=+\infty$ with $N(n)=o(n)$ and $\displaystyle\lim_{n \to \infty} \dfrac{\mu(U_n(\tau) \setminus \tilde{U}_n(\tau))}{\mu(U_n(\tau))}=0$, where $\tilde{U}_n(\tau)=\displaystyle\bigcup_{i=1}^{N(n)}B_{h_i^{-1}(u_n(\tau))}(\xi_i)$. For all $i \in \mathcal{I}$, define $p_i=\dfrac{D_ic_i^{\alpha d}}{\sum_{k=1}^{\infty}D_kc_k^{\alpha d}}$. Let $A^{(i)}$ be as defined in \ref{itm:A1}. If $A^{(i)}=\emptyset$, the piling process is
\begin{enumerate}[(0)]
\item\label{itm:(0)-piling-fin-nper} with probability $p_i$ the bi-infinite sequence $(Z_j)_{j \in \mathbb{Z}}$ with:
\begin{enumerate}[(i)]
\item entry $U.\Theta$ at $j=0$;
\item entries $U.Df_{\xi_i}^{m_l-m_i}(\Theta)\left(\dfrac{c_i}{c_l}\right)^{\alpha}$ at $j=m_l-m_i$ for all $l \geq i+1$;
\item $\infty$ for all other positive indices $j$;
\item $\infty$ for all negative indices $j$;
\end{enumerate}
where $U$ is uniformly distributed on $[0,1]$, $\Theta$ is uniformly distributed on $\mathbb{S}^{d-1}$, and $U$ and $\Theta$ are independent.
\end{enumerate}
If $A^{(i)} \neq \emptyset$, assume there exists an increasing ordering of the $u_{i,l}^{min,max}$ such that $u_{i,l_p}^{min} \leq u_{i,l_{p+1}}^{max}$ for all $p \in \{1,\dots,\#A^{(i)}-1\}$. Then, the piling process is
\begin{enumerate}[(I)]
\item with probability $p_iu_{i,l_1}^{max}$ the bi-infinite sequence $(Z_j)_{j \in \mathbb{Z}}$ with:\begin{enumerate}[(i)]
\item entry $U_0.\Theta$ at $j=0$;
\item entries $U_0.Df_{\xi_i}^{m_l-m_i}(\Theta)\left(\dfrac{c_i}{c_l}\right)^{\alpha}$ at $j=m_l-m_i$ for all $l \geq i+1$;
\item $\infty$ for all other positive indices $j$;
\item $\infty$ for all negative indices $j$;
\end{enumerate}
where $U_0$ is uniformly distributed on $[0,u_{i,l_1}^{max})$, $\Theta$ is uniformly distributed on $\mathbb{S}^{d-1}$, and $U_0$ and $\Theta$ are independent;
\item with probability $p_i(u_{i,l_p}^{min}-u_{i,l_p}^{max})$, where $p \in \{1,\dots,\#A^{(i)}\}$, the bi-infinite sequence $(Z_j)_{j \in \mathbb{Z}}$ with:\begin{enumerate}[(i)]
\item entry $U_{p'}.\Theta$ at $j=0$;
\item entries $U_{p'}.Df_{\xi_i}^{m_l-m_i}(\Theta)\left(\dfrac{c_i}{c_l}\right)^{\alpha}$ at $j=m_l-m_i$ for all $l \geq i+1$;
\item $\infty$ for all other positive indices $j$;
\item entries $U_{p'}.Df_{\xi_i}^{m_l-m_i}(\Theta)\left(\dfrac{c_i}{c_l}\right)^{\alpha}$ at $j=m_l-m_i$ for all $l \in \{l_1,\dots,l_p\}$;
\item $\infty$ for all other negative indices $j$;
\end{enumerate}
where $U_{p'}$ is uniformly distributed on $[u_{i,l_p}^{max},u_{i,l_p}^{min})$ and $\Theta \given \{U_{p'}=u\}$ is uniformly distributed on $\left\lbrace w \in \mathbb{S}^{d-1}:\norm{Df_{\xi_i}^{m_{l_p}-m_i}(w)} \geq \dfrac{1}{u}\left(\dfrac{c_{l_p}}{c_i}\right)^{\alpha} \right\rbrace$;
\item with probability $p_i(u_{i,l_{p+1}}^{max}-u_{i,l_p}^{min})$, where $p \in \{1,\dots,\#A^{(i)}-1\}$, the bi-infinite sequence $(Z_j)_{j \in \mathbb{Z}}$ with:\begin{enumerate}[(i)]
\item entry $U_p.\Theta$ at $j=0$;
\item entries $U_p.Df_{\xi_i}^{m_l-m_i}(\Theta)\left(\dfrac{c_i}{c_l}\right)^{\alpha}$ at $j=m_l-m_i$ for all $l \geq i+1$;
\item $\infty$ for all other positive indices $j$;
\item entries $U_p.Df_{\xi_i}^{m_l-m_i}(\Theta)\left(\dfrac{c_i}{c_l}\right)^{\alpha}$ at $j=m_l-m_i$ for all $l \in \{l_1,\dots,l_p\}$;
\item $\infty$ for all other negative indices $j$;
\end{enumerate}
where $U_p$ is uniformly distributed on $[u_{i,l_p}^{min},u_{i,l_{p+1}}^{max})$, $\Theta$ is uniformly distributed on $\mathbb{S}^{d-1}$, and $U_p$ and $\Theta$ are independent;
\item with probability $p_i.(1-u_{i,l_{\#A^{(i)}}}^{min})$ the bi-infinite sequence $(Z_j)_{j \in \mathbb{Z}}$ with:\begin{enumerate}[(i)]
\item entry $U_{\#A^{(i)}}.\Theta$ at $j=0$;
\item entries $U_{\#A^{(i)}}.Df_{\xi_i}^{m_l-m_i}(\Theta)\left(\dfrac{c_i}{c_l}\right)^{\alpha}$ at $j=m_l-m_i$ for all $l \geq i+1$;
\item $\infty$ for all other positive indices $j$;
\item entries $U_{\#A^{(i)}}.Df_{\xi_i}^{m_l-m_i}(\Theta)\left(\dfrac{c_i}{c_l}\right)^{\alpha}$ at $j=m_l-m_i$ for all $l \in A^{(i)}$;
\item $\infty$ for all other negative indices $j$;
\end{enumerate}
where $U_{\#A^{(i)}}$ is uniformly distributed on $[u_{i,l_{\#A^{(i)}}}^{min},1]$, $\Theta$ is uniformly distributed on $\mathbb{S}^{d-1}$, and $U_{\#A^{(i)}}$ and $\Theta$ are independent.
\end{enumerate}
\end{theorem}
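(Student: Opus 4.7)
The proof should follow the blueprint of Theorem \ref{thm:piling-corr-nper} essentially verbatim, with the main new ingredient being the truncation furnished by the sequence $(N(n))_{n \in \mathbb{N}}$. So I would again proceed in two main steps: first identify the process $(Y_j)_{j \in \mathbb{Z}}$ from Definition \ref{def:piling} (split into a sub-step computing $p_i$ and a sub-step describing the entries), and then condition on $\inf_{j \leq -1} \norm{Y_j} \geq 1$ to obtain $(Z_j)_{j \in \mathbb{Z}}$. The structure of cases (0) and (I)-(IV) parallels the finite non-periodic case since $\mathcal{M}$ lies on a single non-periodic orbit (condition \ref{itm:B} excludes periodicity because $\xi_0 = \lim_i \xi_i$ is an accumulation point of distinct orbit points).

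The first place where countability intervenes is in Sub-step 1.1. I would compute
\begin{equation*}
p_i \sim \dfrac{D_i \cdot \text{Leb}(B_{h_i^{-1}(u_n(\tau))}(\xi_i))}{\sum_{k=1}^{\infty} D_k \cdot \text{Leb}(B_{h_k^{-1}(u_n(\tau))}(\xi_k))} = \dfrac{D_i c_i^{\alpha d}}{\sum_{k=1}^{\infty} D_k c_k^{\alpha d}}
\end{equation*}
but the asymptotic equivalence following from \ref{itm:R3} applies uniformly only on a finite collection of balls. This is exactly where the truncation hypothesis enters: splitting the denominator as $\mu(\tilde{U}_n(\tau)) + \mu(U_n(\tau) \setminus \tilde{U}_n(\tau))$, the second term is $o(\mu(U_n(\tau)))$ by assumption, so the ratio reduces to one over the finite union $\tilde{U}_n(\tau)$ where \ref{itm:R3} does apply uniformly. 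One then takes $n \to \infty$ (so that $N(n) \to \infty$) and obtains the series above, provided the series $\sum_k D_k c_k^{\alpha d}$ is finite, which in turn follows from the truncation hypothesis applied to $\tau_0 > 0$ fixed.

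Sub-step 1.2 goes through exactly as in Theorem \ref{thm:piling-corr-nper}: the direction $\Theta = \Phi_{\xi_i}^{-1}(f^{r_n}(x))/\norm{\Phi_{\xi_i}^{-1}(f^{r_n}(x))}$ is uniform on $\mathbb{S}^{d-1}$ by \ref{itm:R3}, $\norm{Y_0}$ is uniform on $[0,1]$ by \cite[Lemma 3.9]{20enriched}, and for any $l \geq i+1$ the linear approximation $\text{dist}(f^{r_n+j}(x), f^j(\xi_i)) \sim \norm{Df_{\xi_i}^j(w)} \cdot \text{dist}(f^{r_n}(x), \xi_i)$ with $j = m_l - m_i$, combined with (\ref{eq:j-positive-1})--(\ref{eq:j-positive-3}), yields entry (ii). For positive $j \notin \{m_l - m_i : l \geq i+1\}$, non-periodicity of the orbit (which holds in setting \ref{itm:B} since the $\xi_i$ are all distinct elements on the same orbit) forces (iii). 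For negative indices, the same pre-image argument as in the finite case yields (iv), with the entries at $j = m_l - m_i$ for $l < i$ being $\infty$ unless the pre-orbit lands on $\xi_l$.

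Step 2, the conditioning that produces $(Z_j)_{j \in \mathbb{Z}}$ from $(Y_j)_{j \in \mathbb{Z}}$, depends only on the \emph{finitely many} negative indices $j = m_l - m_i$ with $l \in \{1, \ldots, i-1\}$ (since $l > i$ contributes positive indices and $l = i$ is the reference), so it is purely finite combinatorics on $A^{(i)}$ and can be copied verbatim from the non-periodic finite proof, splitting the range of the uniform $U$ according to the ordering of the thresholds $u_{i,l}^{\min}, u_{i,l}^{\max}$.

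The main obstacle is the uniform control in Sub-step 1.1 needed to justify the passage to the infinite sum; once the hypothesis $\mu(U_n(\tau) \setminus \tilde{U}_n(\tau)) = o(\mu(U_n(\tau)))$ is exploited to truncate, everything else reduces to already-proven arguments. A secondary but essentially routine point is confirming that condition (3) of Definition \ref{def:piling}, $\lim_{\modl{j} \to \infty} \norm{Y_j} = \infty$ a.s., holds despite countably many non-infinity entries at positive indices: this follows because, for $l \geq i+1$, the factor $\norm{Df_{\xi_i}^{m_l - m_i}(\Theta)} \to \infty$ along the expanding orbit (uniform expansion along the orbit of $\zeta$ is assumed), while the constants $c_l^{-\alpha}$ are compatible with summability by the truncation hypothesis.
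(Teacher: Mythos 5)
Your proposal is correct and follows essentially the same route as the paper: reduce to the finite non-periodic case (Theorem \ref{thm:piling-corr-nper}) by truncating to $\tilde{U}_n(\tau)$, use the hypothesis $\mu(U_n(\tau)\setminus\tilde{U}_n(\tau))=o(\mu(U_n(\tau)))$ to justify $\sum_i p_i=1$ and the passage $N(n)\to\infty$, and then perform the same finite conditioning on the negatively indexed entries. The paper's write-up is terser (it simply invokes Theorem \ref{thm:piling-corr-nper} at each fixed $n$ with $\mathcal{I}=\{1,\dots,N(n)\}$ and lets $N(n)\to\infty$), but the content matches yours, including the observations that setting \ref{itm:B} forces non-periodicity and that condition (3) of Definition \ref{def:piling} still holds with infinitely many finite positive entries.
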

\begin{proof}
Recall that $(Z_j)_{j \in \mathbb{Z}}$ has the distribution of $(Y_j)_{j \in \mathbb{Z}}$ conditional on $\displaystyle\inf_{j \leq -1} \norm{Y_j} \geq 1$ (see Definition \ref{def:piling}). The result is a consequence of Theorem \ref{thm:piling-corr-nper} in the sense that the use of the sequence $(N(n))_{n \in \mathbb{N}}$ implies that, at each $n$, $\mathcal{M}$ is as in \ref{itm:A} (for $\mathcal{I}=\{1,\dots,N(n)\}$).

\underline{\textbf{Step 1}} We check that the process $(Y_j)_{j \in \mathbb{Z}}$ is, with probability $p_i$, the bi-infinite sequence with:
\begin{enumerate}[(i)]
\item entry $U.\Theta$ at $j=0$;
\item entries $U.Df_{\xi_i}^{m_l-m_i}(\Theta)\left(\dfrac{c_i}{c_l}\right)^{\alpha}$ at $j=m_l-m_i$, for all $l \geq i+1$;
\item $\infty$ for all other positive indices $j$;
\item $\infty$ for all negative indices $j$ except, possibly, $U.Df_{\xi_i}^{m_l-m_i}(\Theta)\left(\dfrac{c_i}{c_l}\right)^{\alpha}$ at $j=m_l-m_i$ for $l=1,\dots,i-1$;
\end{enumerate}
where $U$ is uniformly distributed on $[0,1]$, $\Theta$ is uniformly distributed on $\mathbb{S}^{d-1}$, and $U$ and $\Theta$ are independent.

Verifying that condition (2) in Definition \ref{def:piling} is satisfied for $(Y_j)_{j \in \mathbb{Z}}$ as just described is straightforward. The (a.s.) positive exponential growth of $\norm{Df_{\xi_i}^j(\Theta)}$ for positive $j$ leads to (3) being satisfied. Since $p_1>0$, with positive probability all negatively indexed entries are equal to $\infty$ so (4) is satisfied.

We are left to check that condition (1) in Definition \ref{def:piling} holds.

\underline{\textbf{Sub-step 1.1}} $p_i$ is the probability that an exceedance of the threshold $u_n(\tau)$ by $\norm{\mathbf{X}_{r_n}}$ is due to a hit (at time $r_n$) to the ball around $\xi_i$ of radius $h_i^{-1}(u_n(\tau))$.

Analogous argument to that in Sub-step 1.1 in the proof of Theorem \ref{thm:piling-corr-nper}: how much does a neighbourhood around $\xi_i$ (corresponding to the exceedance of a threshold $u_n(\tau)$) weights in a neighbourhood of $\mathcal{M}$ (corresponding to the exceedance of the same $u_n(\tau)$). Observe that
\begin{equation*}
\displaystyle\lim_{n \to \infty}\sum_{i=N(n)+1}^{\infty}p_i=\displaystyle\lim_{n \to \infty} \dfrac{\mu(U_n(\tau) \setminus \tilde{U}_n(\tau))}{\mu(U_n(\tau))}=0
\end{equation*}
so that $\displaystyle\sum_{i=1}^{\infty}p_i<\infty$ (and, in particular, equal to $1$).

\underline{\textbf{Sub-step 1.2}} Assume a hit at time $r_n$ to the ball around $\xi_i$ of radius $h_i^{-1}(u_n(\tau))$. Then, $(Y_j)_{j \in \mathbb{Z}}$ is as described by (i)-(iv) in Step 1.

Let $\{\norm{\mathbf{X}_{r_n}}>u_n(\tau)\}=f^{-r_n}(\tilde{U}_n(\tau))$. Then, by Theorem \ref{thm:piling-corr-nper}, we have that $(Y_j)_{j \in \mathbb{Z}}$ is, with probability $p_i$, the bi-infinite sequence with:
\begin{enumerate}[(i)]
\item entry $U.\Theta$ at $j=0$;
\item entries $U.Df_{\xi_i}^{m_l-m_i}(\Theta)\left(\dfrac{c_i}{c_l}\right)^{\alpha}$ at $j=m_l-m_i$, for all $l=i+1,\dots,N(n)$;
\item $\infty$ for all other positive indices $j$;
\item $\infty$ for all negative indices $j$ except, possibly, $U.Df_{\xi_i}^{m_l-m_i}(\Theta)\left(\dfrac{c_i}{c_l}\right)^{\alpha}$ at $j=m_l-m_i$ for $l=1,\dots,i-1$;
\end{enumerate}
where $U$ is uniformly distributed on $[0,1]$, $\Theta$ is uniformly distributed on $\mathbb{S}^{d-1}$, and $U$ and $\Theta$ are independent.

Since $\tilde{U}_n(\tau) \sim U_n(\tau)$ (a.s.), by letting $N(n) \to \infty$, we conclude that $(Y_j)_{j \in \mathbb{Z}}$ is as described by (i)-(iv) in Step 1.

\underline{\textbf{Step 2}} The distribution of $(Z_j)_{j \in \mathbb{Z}}$ is given by (0)-(IV).

Analogous to Theorem \ref{thm:piling-corr-nper}, accounting for the changes in $(Y_j)_{j \in \mathbb{Z}}$ that have already been discussed.
\end{proof}
\begin{example}\label{ex:psi-ncont}
Let $f(x)=3x \mod 1$, $x \in [0,1]$, and $\mu= $ Lebesgue measure on $[0,1]$ (invariant for $f$). Take $\zeta=\displaystyle\sum_{j=1}^{+\infty}\left(\dfrac{1}{3}\right)^{3^j}$, and define the observable $\psi$ as
\begin{equation*}
\psi(x):=\begin{cases}
\modl{x-\zeta}^{-2}, x \in B_{\varepsilon_1}(\zeta)\\
\dfrac{1}{2^{i-1}}\modl{x-f^{3^{i-1}}(\zeta)}^{-2}, \ x \in B_{\varepsilon_i}(f^{3^{i-1}}), \ i=2,3,\dots\\
0, \ \text{otherwise}
\end{cases}
\end{equation*}
where $\varepsilon_i>0$ for all $i \in \mathbb{N}$. Observe that, presented as in (\ref{eq:psi}),
\begin{equation*}
\psi(x)=\displaystyle\sum_{i=1}^{\infty}h_i(\modl{x-\xi_i})\mathbf{1}_{B_{\varepsilon_i}(\xi_i)}(x)
\end{equation*}
for $\xi_1=\zeta$, $\xi_i=f^{3^{i-1}}(\zeta)$ for $i=2,3,\dots$, $\xi_0=\displaystyle\lim_{i \to \infty} \xi_i=0$, and $h_i(t)=\dfrac{1}{2^{i-1}}t^{-2}$ for all $i \in \mathbb{N}$ (so that $\alpha=1/2$). In particular, $\mathcal{M}=\{\xi_i\}_{i \in \mathbb{N}}$ and equation (\ref{eq:alpha-hea}) holds with $a_n=(24+16\sqrt{2})n^2$.

Since $\mu=\text{Lebesgue}$, we have that $D_i=1$ for all $i \in \mathbb{N}$. Also, $d=1$ and $c_i=\dfrac{1}{2^{i-1}}$ for all $i \in \mathbb{N}$. Thus, $p_i=\dfrac{\left(\dfrac{1}{2^{i-1}}\right)^{\frac{1}{2}}}{\displaystyle\sum_{k=1}^{\infty}\left(\dfrac{1}{2^{k-1}}\right)^{\frac{1}{2}}}=\dfrac{\left(\dfrac{1}{\sqrt{2}}\right)^{i-1}}{\displaystyle\sum_{k=1}^{\infty}\left(\dfrac{1}{\sqrt{2}}\right)^{k-1}}=\left(1-\dfrac{1}{\sqrt{2}}\right)\left(\dfrac{1}{\sqrt{2}}\right)^{i-1}$ for all $i \in \mathbb{N}$.

Because
\begin{equation*}
\displaystyle\lim_{n \to \infty} \dfrac{\mu(U_n(\tau) \setminus \tilde{U}_n(\tau))}{\mu(U_n(\tau))}=\displaystyle\lim_{n \to \infty}\sum_{i=N(n)+1}^{\infty}p_i=\displaystyle\lim_{n \to \infty}\left(1-\dfrac{1}{\sqrt{2}}\right)\left(\dfrac{1}{\sqrt{2}}\right)^{N(n)}=0
\end{equation*}
for any $N(n)$ such that $\lim_{n \to \infty} N(n)=+\infty$ with $N(n)=o(n)$, then $(N(n))_{n \in \mathbb{N}}$ as in the statement of Theorem \ref{thm:piling-corr-count} exists. For example, let $N(n)=\log(n)$.

We have $f'(x)=3$ for all $x \in [0,1]$, so that $(f^{-j})'(\xi_i)=\dfrac{1}{3^j}$ when $j<0$ and, if $i>1$, $\left(\dfrac{c_l}{c_i}\right)^{\alpha} \geq \sqrt{2}$ for all $l=1,\dots,i-1$. Therefore, $A^{(i)}=\emptyset$ for all $i \in \mathbb{N}$. Applying Theorem \ref{thm:piling-corr-count}, we conclude that the piling process is with probability $p_i=\left(1-\dfrac{1}{\sqrt{2}}\right)\left(\dfrac{1}{\sqrt{2}}\right)^{i-1}$ the bi-infinite sequence $(Z_j)_{j \in \mathbb{Z}}$ with:
\begin{enumerate}[(i)]
\item entry $U$ at $j=0$;
\item entries $U.3^{m_l-m_i}.\left(\dfrac{c_i}{c_l}\right)^{\frac{1}{2}}$ at $j=m_l-m_i$ for all $l \geq i+1$;
\item $\infty$ for all other positive indices $j$;
\item $\infty$ for all negative indices $j$;
\end{enumerate}
where $U$ is uniformly distributed on $[0,1]$, $i \in \mathbb{N}$.
\end{example}
\begin{remark}\label{rmk:EI-ex-5}
For Example \ref{ex:psi-ncont}, we have
\begin{equation*}
\mu(\tilde{U}_n(\tau))=\displaystyle\sum_{i=1}^{N(n)} \mu(B_{h_i^{-1}(u_n(\tau))}(\xi_i))=\displaystyle\sum_{i=1}^{N(n)} 2.\left(\dfrac{1}{\sqrt{2}}\right)^{i-1}u_n(\tau)^{-\frac{1}{2}}=2u_n(\tau)^{-\frac{1}{2}}\dfrac{1-\left(\dfrac{1}{\sqrt{2}}\right)^{N(n)}}{1-\dfrac{1}{\sqrt{2}}}.
\end{equation*}
Let $q_n=N(n)$. From \cite[Corollary 4.5]{16correlated},
\begin{equation*}
\begin{split}
\mu(\tilde{U}_n^{(q_n)}(\tau))&=\displaystyle\sum_{i=1}^{N(n)-1} \left(\mu(B_{h_i^{-1}(u_n(\tau))}(\xi_i))-\dfrac{1}{3^3.3^{3^{i-1}}}\mu(B_{h_{i+1}^{-1}(u_n(\tau))}(\xi_{i+1}))\right)\\
&\quad +\mu(B_{h_{N(n)}^{-1}(u_n(\tau))}(\xi_{N(n)}))\\
&=\displaystyle\sum_{i=1}^{N(n)-1} \left(2.\left(\dfrac{1}{\sqrt{2}}\right)^{i-1}u_n(\tau)^{-\frac{1}{2}}-\dfrac{1}{3^3.3^{3^{i-1}}}2.\left(\dfrac{1}{\sqrt{2}}\right)^{i}u_n(\tau)^{-\frac{1}{2}}\right)\\
&\quad +2.\left(\dfrac{1}{\sqrt{2}}\right)^{N(n)-1}u_n(\tau)^{-\frac{1}{2}}\\
&=2u_n(\tau)^{-\frac{1}{2}}\left[\displaystyle\sum_{i=1}^{N(n)-1} \left(\left(\dfrac{1}{\sqrt{2}}\right)^{i-1}-\dfrac{1}{3^33^{3^{i-1}}}\left(\dfrac{1}{\sqrt{2}}\right)^{i}\right)+\left(\dfrac{1}{\sqrt{2}}\right)^{N(n)-1}\right].
\end{split}
\end{equation*}
Thus, the extremal index is
\begin{equation*}
\begin{split}
\vartheta&=\lim_{n \to \infty} \dfrac{\mu(\tilde{U}_n^{(q_n)}(\tau))}{\mu(\tilde{U}_n(\tau))} \approx 0.997242
\end{split}
\end{equation*}
round off after stabilisation of decimal places (using numerical computation).
\end{remark}

\appendix
\section{Dependence requirements}\label{appx:dependance}
In order to include in this text the conditions $\D_{q_n}$ and $\D'_{q_n}$ from \cite{20enriched}, we need some formalism from sections 3, 3.1 and 3.2 of the same paper that we now summarise.

We require sequences $(k_n)_{n \in \mathbb{N}}, (r_n)_{n \in \mathbb{N}}, (t_n)_{n \in \mathbb{N}}$ such that $$k_n,r_n,t_n \underset{\scriptsize{n \to \infty}}{\longrightarrow}\infty \text{ and } k_nt_n=o(n),$$
where $r_n:=\floor{n/k_n}$, and a sequence $(q_n)_{n \in \mathbb{N}}$ satisfying
$$q_n=o(r_n).$$
These sequences play a role in identifying clusters of exceedances by two different approaches (though made equivalent by condition $\D'_{q_n}$): the first consists on dividing the observations into $k_n$ blocks, each of size $r_n$, and with time gaps of size $t_n$ between subsequent blocks, and determining that exceedances in different blocks belong to different clusters; the second determines that a cluster ends when no exceedances are registered in $q_n$ successive time instants. In particular, notice that $q_n=q$ for all $n \in \mathbb{N}$, where $q \in \mathbb{N}$, is a possibility, which relates to the case of prime period $q$ periodicity.

Indeed, the conditions $\D_{q_n}$ and $\D'_{q_n}$ can be seen, respectively, as long range and short range independence requirements on the cluster point process: whereas condition $\D_{q_n}$ expresses that, has time run for long enough, blocks that are sufficiently far apart are basically independent, we have that $\D'_{q_n}$ essentially imposes that not more than one cluster of exceedances is expected within the same block.

We resume with more formalism in the following paragraphs.

Let $\mathcal{V}=\mathbb{R}^d$ (with the Euclidean norm), and $\mathcal{V}^{\mathbb{N}_0}$/$\mathcal{V}^{\mathbb{Z}}$ be the spaces of one-sided/two-sided $\mathcal{V}$-valued sequences. Take the (one-sided/two-sided) shift map $\sigma:\mathcal{V}^{\mathbb{N}_0,\mathbb{Z}} \to \mathcal{V}^{\mathbb{N}_0,\mathbb{Z}}$. We identify our stationary sequence $(\mathbf{X}_n)_{n \in \mathbb{N}_0}$, which takes values in $\mathbb{R}^d$, with the coordinate variable process in $(\mathcal{V}^{\mathbb{N}_0},\mathcal{B}^{\mathbb{N}_0},\mathbb{P})$ given by Kolmogorov's extension theorem where $\mathcal{B}^{\mathbb{N}_0}$ denotes the product $\sigma$-algebra, in other words, $\mathcal{B}^{\mathbb{N}_0}$ is the $\sigma$-algebra generated by the coordinate functions $Z_n:\mathcal{V}^{\mathbb{N}_0} \to \mathcal{V}$ such that $Z_n(x_0,x_1,\dots)=x_n$, for all $n \in \mathbb{N}_0$. Observe that $Z_i=\sigma \circ Z_{i-1}$, for all $i \in \mathbb{N}$, and the stationarity of $(\mathbf{X}_n)_{n \in \mathbb{N}_0}$ results in $\sigma$-invariance of $\mathbb{P}$.

Let
\begin{equation*}
\mathscr{F}:=\left\lbrace\{(x_j)_j \in \mathcal{V}^{\mathbb{N}_0,\mathbb{Z}}: x_j \in H_j, j=0,\dots,m\}: H_j \in \mathcal{F}_{\mathcal{V}}, j=0,\dots,m, \ m \in \mathbb{N}\right\rbrace
\end{equation*}
where $\mathcal{F}_{\mathcal{V}}$ denotes the field generated by the rectangles of $\mathcal{V}$ of the form $[e_1,f_1) \times \dots \times [e_d,f_d)$ (in particular, $\mathscr{F}$ is a field).

For each $l=1,\dots,m$, $m \in \mathbb{N}$, assume that $A_l \in \mathscr{F}$ and define
\begin{equation*}
A_{n,l}=\left\lbrace\left(u_n^{-1}(\norm{\mathbf{X}_j})\dfrac{\mathbf{X}_j}{\norm{\mathbf{X}_j}}\right)_j \in \mathcal{V}^{\mathbb{N}_0}: \left(u_n^{-1}(\norm{\mathbf{X}_j})\dfrac{\mathbf{X}_j}{\norm{\mathbf{X}_j}}\right)_j \in A_l \right\rbrace
\end{equation*}
where $u_n^{-1}$ is as in (\ref{eq:gen-u_n^{-1}}). We observe that
\begin{equation}
\left\lVert u_n^{-1}(\norm{\mathbf{X}_{j}})\dfrac{\mathbf{X}_{j}}{\norm{\mathbf{X}_{j}}} \right\rVert < \tau \iff \norm{\mathbf{X}_{j}} > u_n(\tau).
\end{equation}

We will be particularly interested in the events
\begin{equation*}
A_{n,l}^{(q_n)}:=A_{n,l} \cap \bigcap_{j=1}^{q_n} \sigma^{-j}(A_{n,l})^{c}, \quad l=1,\dots,m,
\end{equation*}
and
\begin{equation*}
U_n(\tau):=\{\norm{X_0}>u_n(\tau)\}
\end{equation*}
for $q_n$ and $u_n$ as defined before.

Just as $U_n(\tau)$, $A_{n,l}^{(q_n)}$ represents a set of exceedances but with the requirement that a pattern with length $l$ that has just been observed is not to be repeated for the next $q_n$ time instants.

Finally, let $J_l=[a_l,b_l)$ where $0 \leq a_1<b_1 \leq a_2<b_2 \leq \dots \leq a_m<b_m \leq 1$ and, for each $n \in \mathbb{N}$, define $J_{n,l}:=[(\ceil{k_na_l-1})r_n,(\floor{k_nb_l+1})r_n)$ where $k_n$ and $r_n$ are as above.

For an interval $I$ contained in $[0,+\infty)$, we use the notation $\mathscr{W}_{I}(A) \equiv \displaystyle\bigcap_{j \in I \cap \mathbb{N}_0} \sigma^{-j}(A^c)$ and $\mathscr{W}_{I}^c(A) \equiv (\mathscr{W}_{I}(A))^c$.
 
\textbf{Condition $\D_{q_n}$}. We say that $\D_{q_n}$ holds for the sequence $\mathbf{X}_0,\mathbf{X}_1,\dots$ if there exist sequences $(k_n)_{n \in \mathbb{N}}, (r_n)_{n \in \mathbb{N}}, (t_n)_{n \in \mathbb{N}}$ and $(q_n)_{n \in \mathbb{N}}$ as above, such that for every $m,t,n \in \mathbb{N}$ and every $J_l$ and $A_l$, with $l=1,\dots,m$, we have
\begin{equation*}
\left\lvert \mathbb{P}\left(A_{n,l}^{(q_n)} \cap \bigcap_{i=l}^{m} \mathscr{W}_{J_{n,i}}(A_{n,i}^{(q_n)})\right) - \mathbb{P}(A_{n,l}^{(q_n)}) \mathbb{P}\left(\bigcap_{i=l}^{m} \mathscr{W}_{J_{n,i}}(A_{n,i}^{(q_n)})\right) \right\rvert \leq \gamma(n,t)
\end{equation*}
where $\min\{J_{n,l} \cap \mathbb{N}_0\} \geq t$ and $\gamma(n,t)$ is decreasing in $t$ for each $n$ and $\lim_{n \to \infty} n\gamma(n,t_n)=0$.

\textbf{Condition $\D'_{q_n}$}. We say that $\D'_{q_n}$ holds for the sequence $\mathbf{X}_0,\mathbf{X}_1,\dots$ if there exist sequences $(k_n)_{n \in \mathbb{N}}, (r_n)_{n \in \mathbb{N}}, (t_n)_{n \in \mathbb{N}}$ and $(q_n)_{n \in \mathbb{N}}$ as above, such that for every $A_1 \in \mathscr{F}$, we have
\begin{equation*}
\lim_{n \to \infty} n\mathbb{P}\left(A_{n,1}^{(q_n)} \cap \mathscr{W}^c_{[q_n+1,r_n)}(A_{n,1})\right)=0.
\end{equation*}

\textbf{Condition $\tilde{\D'}_{q_n}$}. We say that $\tilde{\D'}_{q_n}$ holds for the sequence $\mathbf{X}_0,\mathbf{X}_1,\dots$ if there exist sequences $(k_n)_{n \in \mathbb{N}}, (r_n)_{n \in \mathbb{N}}, (t_n)_{n \in \mathbb{N}}$ and $(q_n)_{n \in \mathbb{N}}$ as above, such that for every $\tau>0$, we have
\begin{equation*}
\lim_{n \to \infty} n\mathbb{P}\left(U_n(\tau) \cap \mathscr{W}^c_{[q_n+1,r_n)}(U_n(\tau))\right)=0.
\end{equation*}
\begin{remark}
Condition $\tilde{\D'}_{q_n}$ implies Condition $\D'_{q_n}$ and should be easier to check.
\end{remark}

As pointed out in section 4 of \cite{20enriched}, $\D_{q_n}$ and $\D'_{q_n}$ follow from decay of correlations, for processes arising from dynamical systems.
\begin{definition}
Let $\mathcal{C}_1$ and $\mathcal{C}_2$ be Banach spaces of $\mathbb{R}$-valued measurable functions defined on $\mathcal{X}$. Let the \textit{correlation} between non-zero functions $\phi \in \mathcal{C}_1$ and $\psi \in \mathcal{C}_2$ with respect to $\mu$ at time $n \in \mathbb{N}$ be defined as
\begin{equation*}
\text{Cor}_{\mu}(\phi,\psi,n):=\dfrac{1}{\norm{\phi}_{\mathcal{C}_1}\norm{\psi}_{\mathcal{C}_2}} \left\lvert \int \phi(\psi \circ T^n) d\mu - \int \phi d\mu \int \psi d\mu \right\rvert.
\end{equation*}
The system is said to have \textit{decay of correlations}, with respect to $\mu$, for observables in $\mathcal{C}_1$ \textit{against} observables in $\mathcal{C}_2$ if there exists a rate function $\rho:\mathbb{N} \to [0,+\infty)$ with $\displaystyle\lim_{n \to \infty} \rho(n)=0$ and such that for all $\phi \in \mathcal{C}_1$ and for all $\psi \in \mathcal{C}_2$ it holds $\text{Cor}_{\mu}(\phi,\psi,n) \leq \rho(n)$.
\end{definition}
The following lemma gives the common shortcut to check that $\D_{q_n}$ and $\D'_{q_n}$ hold. It has analogous counterparts for weaker versions of the conditions $\D_{q_n}$ and $\D'_{q_n}$ that have already been established (for example, the versions compatible with extreme value laws or rare events point processes).
\begin{lemma}\label{lem:conditions}
If the system has decay of correlations against $L^1$ and
\begin{enumerate}[(1)]
\item $\displaystyle\lim_{n \to \infty} \norm{\mathbf{1}_{A_{n,l}^{(q_n)}}}_{\mathcal{C}_1}n\rho(t_n)=0$ for some sequence $(t_n)_n$ with $t_n=o(n)$;
\item $\displaystyle\lim_{n \to \infty} \norm{\mathbf{1}_{U_n(\tau)}}_{\mathcal{C}_1}\sum_{j=q_n}^{n}\rho(j)=0$;
\end{enumerate}
are satisfied, then $\D_{q_n}$ and $\D'_{q_n}$ hold.
\end{lemma}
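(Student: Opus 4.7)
The plan is to derive both $\D_{q_n}$ and $\D'_{q_n}$ by inserting the relevant indicator functions into the decay-of-correlations inequality, with the \emph{early} indicator placed in $\mathcal{C}_1$ (contributing the norm appearing in the hypotheses) and the \emph{late} indicator placed in $L^1$ (where its norm is bounded by $1$). The coordinate gap between the two groups of variables drives the rate $\rho$, and hypotheses (1)--(2) are designed precisely to absorb those contributions. Throughout, let $m_0$ denote the fixed, $n$-independent coordinate depth of the ambient sets $A_l \in \mathscr{F}$, so that $A_{n,l}^{(q_n)}$ has coordinate depth at most $m_0 + q_n$.

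For $\D_{q_n}$, I would set $\phi := \mathbf{1}_{A_{n,l}^{(q_n)}}$ and $\psi := \mathbf{1}_{B_{n,l}}$ with $B_{n,l} := \bigcap_{i=l}^{m}\mathscr{W}_{J_{n,i}}(A_{n,i}^{(q_n)})$, and let $t := \min(J_{n,l} \cap \mathbb{N}_0)$. Since $B_{n,l}$ depends only on coordinates $\geq t$, writing $\psi = \tilde{\psi}\circ f^{\,t - m_0 - q_n}$ for the obvious shifted cylinder and applying decay of correlations gives
\begin{equation*}
\left| \mathbb{P}(A_{n,l}^{(q_n)} \cap B_{n,l}) - \mathbb{P}(A_{n,l}^{(q_n)})\,\mathbb{P}(B_{n,l}) \right| \leq \|\mathbf{1}_{A_{n,l}^{(q_n)}}\|_{\mathcal{C}_1}\, \rho(t - m_0 - q_n).
\end{equation*}
Defining $\gamma(n,t) := \|\mathbf{1}_{A_{n,l}^{(q_n)}}\|_{\mathcal{C}_1}\, \rho(t - m_0 - q_n)$, monotonicity of $\gamma(n,\cdot)$ is inherited from $\rho$, and $n\gamma(n,t_n) \to 0$ is precisely hypothesis (1) (the extra shift by $m_0 + q_n$ is harmless once $t_n$ is chosen with $q_n = o(t_n)$, which is compatible with $q_n = o(r_n)$ and $t_n = o(n)$).

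For $\D'_{q_n}$, a union bound together with $A_{n,1}^{(q_n)} \subseteq A_{n,1}$ gives
\begin{equation*}
n\mathbb{P}(A_{n,1}^{(q_n)} \cap \mathscr{W}_{[q_n+1,r_n)}^{c}(A_{n,1})) \leq n\sum_{j=q_n+1}^{r_n-1} \mathbb{P}(A_{n,1} \cap \sigma^{-j}(A_{n,1})).
\end{equation*}
For each $j > m_0$, decay of correlations applied to $\mathbf{1}_{A_{n,1}} \in \mathcal{C}_1$ against the shifted $\mathbf{1}_{A_{n,1}} \in L^1$ yields
\begin{equation*}
\mathbb{P}(A_{n,1} \cap \sigma^{-j}(A_{n,1})) \leq \mathbb{P}(A_{n,1})^2 + \|\mathbf{1}_{A_{n,1}}\|_{\mathcal{C}_1}\, \rho(j - m_0),
\end{equation*}
so the total bound becomes $n r_n\, \mathbb{P}(A_{n,1})^2 + n\|\mathbf{1}_{A_{n,1}}\|_{\mathcal{C}_1}\sum_{j=q_n}^{n}\rho(j)$. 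The first term tends to $0$ because $r_n/n \to 0$ and $n\mathbb{P}(A_{n,1}) = O(1)$ (inherited from $n\mathbb{P}(U_n(\tau)) \to \tau$); the second vanishes by hypothesis (2), once $\|\mathbf{1}_{A_{n,1}}\|_{\mathcal{C}_1}$ is controlled by $\|\mathbf{1}_{U_n(\tau)}\|_{\mathcal{C}_1}$ for a $\tau$ chosen to accommodate the defining rectangle of $A_{n,1}$.

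The main obstacle is the book-keeping around the coordinate depth of $A_{n,l}^{(q_n)}$, which grows linearly with $q_n$, relative to the gap $t_n$ in (1) and the summation range in (2). Since $q_n = o(r_n)$ and $r_n = o(n)$, one can select $t_n$ with $q_n = o(t_n)$ while preserving $k_n t_n = o(n)$, so the shifts by $m_0 + q_n$ inside $\rho$ become negligible. A secondary point is verifying that $\|\mathbf{1}_{A_{n,1}}\|_{\mathcal{C}_1}$ is comparable to $\|\mathbf{1}_{U_n(\tau)}\|_{\mathcal{C}_1}$, which in the usual Banach spaces of observables (quasi-Hölder, BV, Lipschitz) is standard from perimeter-type estimates on the rectangular level sets defining $\mathscr{F}$.
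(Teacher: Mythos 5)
Your treatment of $\D_{q_n}$ is essentially the paper's argument: take $\phi=\mathbf{1}_{A_{n,l}^{(q_n)}}$ against the shifted indicator of $\bigcap_{i=l}^{m}\mathscr{W}_{J_{n,i}}(A_{n,i}^{(q_n)})$ (whose $L^1$-norm is at most $1$), and let $\gamma(n,t)$ be the resulting correlation bound so that hypothesis (1) gives $n\gamma(n,t_n)\to 0$. Your extra shift by $m_0+q_n$ inside $\rho$ is not needed in the dynamical formulation — the time separation in $\text{Cor}_\mu(\phi,\psi,t)$ is simply the $t$ in $\psi\circ T^t$, regardless of how many forward iterates the set defining $\phi$ involves — but it is harmless, since $q_n=o(r_n)=o(n)$ lets you re-choose $t_n$ as you indicate.

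The $\D'_{q_n}$ part has a genuine gap. The paper's correlation is normalised by $\norm{\phi}_{\mathcal{C}_1}\norm{\psi}_{\mathcal{C}_2}$ with $\mathcal{C}_2=L^1$, so the bound for each summand is
\begin{equation*}
\mathbb{P}\bigl(A_{n,1}\cap\sigma^{-j}(A_{n,1})\bigr)\;\leq\;\mathbb{P}(A_{n,1})^2+\norm{\mathbf{1}_{A_{n,1}}}_{\mathcal{C}_1}\,\mathbb{P}(A_{n,1})\,\rho(j-m_0),
\end{equation*}
carrying the factor $\norm{\mathbf{1}_{A_{n,1}}}_{L^1}=\mathbb{P}(A_{n,1})=O(1/n)$, which your displayed inequality drops. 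Without it, your second term is $n\norm{\mathbf{1}_{A_{n,1}}}_{\mathcal{C}_1}\sum_{j=q_n}^{n}\rho(j)$, which is a full factor of $n$ larger than the quantity controlled by hypothesis (2); the assertion that it ``vanishes by hypothesis (2)'' therefore does not follow as written (in the intended applications $\norm{\mathbf{1}_{U_n(\tau)}}_{\mathcal{C}_1}$ is bounded and $\sum_{j=q_n}^{n}\rho(j)\to 0$, so the extra $n$ is fatal, not cosmetic). Retaining the $L^1$ factor and using $n\mathbb{P}(A_{n,1})=O(1)$ recovers $O(1)\cdot\norm{\mathbf{1}_{A_{n,1}}}_{\mathcal{C}_1}\sum_{j=q_n}^{n}\rho(j)\to 0$; this gain of a factor $\mu(U_n(\tau))\sim\tau/n$ is precisely why the lemma insists on decay of correlations \emph{against $L^1$}, and it is how the paper argues — verifying $\tilde{\D'}_{q_n}$ with $\phi=\psi=\mathbf{1}_{U_n(\tau)}$ and then invoking $\tilde{\D'}_{q_n}\Rightarrow\D'_{q_n}$, which also spares it your secondary step of comparing $\norm{\mathbf{1}_{A_{n,1}}}_{\mathcal{C}_1}$ with $\norm{\mathbf{1}_{U_n(\tau)}}_{\mathcal{C}_1}$.
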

\begin{proof}
We first check that (1) implies $\D_{q_n}$. Taking $\phi=\mathbf{1}_{A_{n,l}^{(q_n)}}$ and $\psi=\mathbf{1}_{T^{-t}\left(\bigcap_{i=l}^m W_{J_{n,i}}(A_{n,i}^{(q_n)})\right)}$, we have
\begin{equation*}
\begin{split}
&\left\lvert \mathbb{P}\left(A_{n,l}^{(q_n)} \cap \bigcap_{i=l}^{m} W_{J_{n,i}}(A_{n,i}^{(q_n)})\right) - \mathbb{P}(A_{n,l}^{(q_n)}) \mathbb{P}\left(\bigcap_{i=l}^{m} W_{J_{n,i}}(A_{n,i}^{(q_n)})\right) \right\rvert\\
&=\left\lvert \int \phi(\psi \circ T^t) d\mu - \int \phi d\mu \int \psi d\mu\right\rvert \leq \norm{\mathbf{1}_{A_{n,l}^{(q_n)}}}_{\mathcal{C}_1}\rho(t)
\end{split}
\end{equation*}
so that, by (1), $\D_{q_n}$ holds with $\gamma(n,t)=\norm{\mathbf{1}_{A_{n,l}^{(q_n)}}}_{\mathcal{C}_1}\rho(t)$. In turn, taking $\phi=\psi=\mathbf{1}_{U_n(\tau)}$, and since
\begin{equation*}
\mathbb{P}\left(U_n(\tau) \cap T^{-j}(U_n(\tau))\right)=\int \phi(\phi \circ T^j) d\mu \leq (\mu(U_n(\tau)))^2+\norm{\mathbf{1}_{U_n(\tau)}}_{\mathcal{C}_1}\mu(U_n(\tau))\rho(j)
\end{equation*}
which implies that
\begin{equation*}
\begin{split}
n\sum_{j=q_n+1}^{r_n-1}\mathbb{P}\left(U_n(\tau) \cap T^{-j}(U_n(\tau))\right) &\leq n(r_n-1)(\mu(U_n(\tau)))^2 + n.\norm{\mathbf{1}_{U_n(\tau)}}_{\mathcal{C}_1}\mu(U_n(\tau))\sum_{j=q_n+1}^{r_n-1}\rho(j)\\
&\leq \dfrac{n^2(\mu(U_n(\tau)))^2}{k_n}+n\norm{\mathbf{1}_{U_n(\tau)}}_{\mathcal{C}_1}\mu(U_n(\tau))\sum_{j=q_n}^{n}\rho(j)\\
&\leq \dfrac{\tau^2}{k_n}+\tau\norm{\mathbf{1}_{U_n(\tau)}}_{\mathcal{C}_1}\sum_{j=q_n}^{n}\rho(j) \underset{\scriptsize{n \to \infty}}{\longrightarrow}0
\end{split}
\end{equation*}
using (2). Therefore, we have that condition $\tilde{\D'}_{q_n}$ is satisfied, which implies $\D'_{q_n}$.
\end{proof}

\subsection{Application to the Examples in Sections \ref{sec:corr-finite} and \ref{sec:corr-count}}\label{appx:dependance-ex}
We check that the Examples presented in Sections \ref{sec:corr-finite} and \ref{sec:corr-count} meet the dependence requirements $\D_{q_n}$ and $\D'_{q_n}$. For that matter, we clearly are interested in proving that (1) and (2) of Lemma \ref{lem:conditions} hold for the same examples. In fact, it is enough that the system has summable decay of correlations against $L^1(\mu)$ and that there exists $C>0$ such that, for all $n \in \mathbb{N}$, $\mathbf{1}_{A_{n,l}^{(q_n)}},\mathbf{1}_{U_n(\tau)} \in \mathcal{C}_1$ and $\norm{\mathbf{1}_{A_{n,l}^{(q_n)}}}_{\mathcal{C}_1},\norm{\mathbf{1}_{U_n(\tau)}}_{\mathcal{C}_1} \leq C$ for (1) and (2) of Lemma \ref{lem:conditions} to follow. For all the one dimensional examples illustrating both the finite and countable maximal set scenarios, we indeed have exponential decay of correlations for $BV$ against $L^1$ observables. For completeness of the exposition, we recall the definition of the Banach space $BV$.
\begin{definition}
Let $\phi:I \to \mathbb{R}$ be a measurable function, where $I \subset \mathbb{R}$ is an interval. Let the \textit{variation} of $\phi$ be defined as
\begin{equation*}
Var(\phi):=\sup\left\lbrace \sum_{i=0}^{n-1}\modl{\phi(x_{i+1})-\phi(x_i)} \right\rbrace
\end{equation*}
where the supremum is taken over all ordered sequences $(x_i)_{i=0}^{n}$ in $I$. The \textit{$BV$-norm} is defined as $\norm{\phi}_{BV}:=\sup\modl{\phi}+Var(\phi)$ and the space $BV:=\{\phi:I \to \mathbb{R}:\norm{\phi}_{BV}<\infty\}$ (equipped with the $BV$-norm) is a Banach space, the space of functions with \textit{bounded variation}.
\end{definition}
It is immediate to notice that the $BV$-norm of $\mathbf{1}_A$ is bounded above by
$$1+2\#\{\text{connected components of A}\}$$
for any measurable $A \subset I$. In particular, when the maximal set is finite, since the sets $A_{n,l}^{(q_n)}$ and $U_n(\tau)$ correspond, respectively, to a finite number of annuli or balls around the maximal points then $\norm{\mathbf{1}_{A_{n,l}^{(q_n)}}}_{BV},\norm{\mathbf{1}_{U_n(\tau)}}_{BV} \leq C$ for some $C>0$ that doesn't depend on $n \in \mathbb{N}$. Therefore, in case $\mathcal{M}$ is finite we are done with proving that (1) and (2) of Lemma \ref{lem:conditions} hold.

In the countable setting, however, we do not expect a uniform bound on the $BV$-norms of the relevant indicator functions. Still, we can prove what we had wished for.
\begin{lemma}\label{lem:shortcut-corr-ncont}
Let $f$, $\mu$, $\mathcal{M}$ and $\psi$ be as in Example \ref{ex:psi-ncont}. Then
\begin{enumerate}[(1)]
\item $\displaystyle\lim_{n \to \infty} \norm{\mathbf{1}_{\tilde{A}_{n,l}^{(q_n)}}}_{\mathcal{C}_1}n\rho(t_n)=0$ for some sequence $(t_n)_n$ with $t_n=o(n)$;
\item $\displaystyle\lim_{n \to \infty} \norm{\mathbf{1}_{\tilde{U}_n(\tau)}}_{\mathcal{C}_1}\sum_{j=q_n}^{n}\rho(j)=0$.
\end{enumerate}
are satisfied, where $\tilde{U}_n(\tau)=\tilde{U}_n$ as in Example \ref{ex:psi-ncont} and the sets $\tilde{A}_{n,l}^{(q_n)}$ are to be characterised in the proof.
\end{lemma}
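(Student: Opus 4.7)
My plan rests on two facts. First, the tripling map $f(x)=3x\bmod 1$ has exponential decay of correlations of $BV$ against $L^1$, so there exist $C_0>0$ and $\theta\in(0,1)$ with $\rho(n)\leq C_0\theta^n$. Second, the $BV$-norm of an indicator $\mathbf{1}_A$ on $[0,1]$ is bounded above by $1+2\,c(A)$, where $c(A)$ denotes the number of connected components of $A$. Because $\rho$ decays exponentially, it is enough to show that the relevant $BV$-norms grow at most polynomially in $n$, and then to pick the sequences $t_n$ and $q_n$ slowly enough.

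I would handle (2) first. By construction, $\tilde{U}_n(\tau)$ is a disjoint union of at most $N(n)=\log n$ balls (for $n$ large the balls do not overlap), so $\|\mathbf{1}_{\tilde{U}_n(\tau)}\|_{BV}\leq 1+2\log n$. Choosing $q_n$ of order $\log n$ (consistent with $q_n=o(r_n)$), the tail $\sum_{j=q_n}^{n}\rho(j)$ is of order $\theta^{q_n}=n^{\log\theta}$, a negative power of $n$. The product with $1+2\log n$ tends to zero; if $q_n=\log n$ is not enough to beat the logarithmic prefactor one can instead take $q_n=(\log n)^2$ and obtain super-polynomial decay.

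For (1), I would define $\tilde{A}_{n,l}^{(q_n)}$ exactly as $A_{n,l}^{(q_n)}$, but with $\mathcal{M}$ replaced throughout by its truncation $\{\xi_1,\ldots,\xi_{N(n)}\}$; equivalently, $\psi$ is replaced by the observable supported in $\tilde{U}_n(\tau)$. Viewed in $\mathcal{X}=[0,1]$, this set is a Boolean combination of preimages under $f^{0},f^{1},\ldots,f^{m(l)+q_n}$ of unions of at most $N(n)$ small balls, where $m(l)$ is an integer depending only on $l$ (the number of coordinates on which $A_l\in\mathscr{F}$ depends). Since $f$ expands uniformly by $3$, the preimage $f^{-j}$ of a single ball has at most $3^j$ components, so a direct union bound gives
\begin{equation*}
c(\tilde{A}_{n,l}^{(q_n)})\leq C_l\cdot N(n)^{m(l)+1}\cdot 3^{m(l)+q_n},
\end{equation*}
and therefore $\|\mathbf{1}_{\tilde{A}_{n,l}^{(q_n)}}\|_{BV}=O((\log n)^{m(l)+1}\cdot 3^{q_n})$, which is polynomial in $n$ whenever $q_n=O(\log n)$. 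Choosing $t_n=(\log n)^{2}$ (so $t_n=o(n)$), the factor $\rho(t_n)\leq C_0\theta^{(\log n)^2}$ decays faster than any polynomial, hence $n\,\|\mathbf{1}_{\tilde{A}_{n,l}^{(q_n)}}\|_{BV}\,\rho(t_n)\to 0$, yielding (1).

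The only delicate point will be the estimate of $c(\tilde{A}_{n,l}^{(q_n)})$: each further intersection with a preimage of a complement can split existing components, so the recursion is multiplicative rather than additive. This is only a bookkeeping issue, not a genuine obstacle, because any loose upper bound of the form $\prod_{j=0}^{m(l)+q_n}\bigl(1+c(f^{-j}(\tilde{U}_n(\tau)))\bigr)$ is more than sufficient to survive the exponential decay of $\rho$; the polynomial (in $n$) growth it gives is exactly what makes the proof go through.
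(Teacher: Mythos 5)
Your argument is correct, but it reaches the conclusion by a different route from the paper, and the difference is worth recording. For part (2) you and the paper do essentially the same thing: $\tilde{U}_n(\tau)$ is a union of $N(n)$ intervals, so $\norm{\mathbf{1}_{\tilde{U}_n(\tau)}}_{BV}\leq 1+2N(n)$, and an exponential tail $\sum_{j\geq q_n}\rho(j)$ with $q_n\asymp\log n$ kills the logarithmic prefactor. For part (1), however, the paper does \emph{not} use the brute-force branch count $3^{j}$ per preimage. It exploits the fact that the $\xi_i$ all lie on one orbit: within each of the $N(n)$ balls of $\tilde U_n(\tau_0)$, intersecting with $f^{-j}(\tilde U_n(\tau_j))$ only nests a smaller ball around the same $\xi_i$ (the other $3^j-1$ branches of $f^{-j}$ miss the ball for large $n$), and the $q_n$ intersections with shifted complements then carve out at most one annulus per $\xi_i$. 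This yields the sharp bound of at most $2N(n)$ connected components, hence $\norm{\mathbf{1}_{\tilde A_{n,l}^{(q_n)}}}_{BV}=O(N(n))$ uniformly in $q_n$, after which $t_n=N(n)$ already suffices. Your bound $c(\tilde A_{n,l}^{(q_n)})=O\bigl(N(n)^{m(l)+1}3^{m(l)+q_n}\bigr)$ is much cruder but still only polynomial in $n$ when $q_n=O(\log n)$, and your larger choice $t_n=(\log n)^2$ (still $o(n)$, and compatible with $k_nt_n=o(n)$ for a suitable $k_n$) makes $n\rho(t_n)$ super-polynomially small, so the product tends to zero; even your multiplicative fallback $\prod_j(1+c(f^{-j}(\tilde U_n(\tau))))$, which is of order $n^{c\log n}$, is beaten by $\rho((\log n)^2)=n^{-(\log 3)\log n}$. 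What each approach buys: yours is more elementary and requires no structural analysis of how the balls around the $\xi_i$ interact under $f$, but it is tied to $q_n=O(\log n)$ and to exponential decay of correlations; the paper's component count is independent of $q_n$ and would survive a much larger truncation level $N(n)$ (and hence larger $q_n$) or merely summable polynomial decay. One small bookkeeping point you should make explicit if you write this up: for 1-dimensional sets that are finite unions of intervals the component count of an intersection is \emph{additive} ($c(A\cap B)\leq c(A)+c(B)$, and $c(A^c)\leq c(A)+1$), so the "multiplicative recursion" you worry about never actually occurs and the additive bound $\sum_{j\leq m(l)+q_n}3^jO(N(n))=O(3^{q_n}N(n))$ is the honest one.
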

\begin{proof}
Here $\mathcal{C}_1=BV$ and $\rho(t)=\left(\dfrac{1}{3}\right)^{t}$. Setting $q_n=N(n)$ then (2) rewrites
\begin{equation*}
\displaystyle\lim_{n \to \infty} \norm{\mathbf{1}_{\tilde{U}_n(\tau)}}_{BV}\left(\dfrac{1}{3}\right)^{N(n)} \leq \displaystyle\lim_{n \to \infty} (1+2N(n))\left(\dfrac{1}{3}\right)^{N(n)}=0.
\end{equation*}
As for (1), we take $A_l=[0,\tau_0) \times \dots \times [0,\tau_m)$ where $\tau_0 \leq \tau_1 \leq \dots \leq \tau_m$ (\textit{i.e.} $H_j=[0,\tau_j)$ for all $j=0,\dots,m$). Then, $\tilde{A}_{n,l}^{(q_n)}:=\displaystyle\bigcap_{j=0}^{l} f^{-j}(\tilde{U}_n(\tau_j)) \cap \displaystyle\bigcap_{j=1}^{q_n}\bigcup_{k=j}^{l} f^{-k}(\tilde{U}_n^c(\tau_{k-j}))$. We claim that our choice of $A_l$ does provide us with the biggest possible number of connected components for $\tilde{A}_{n,l}^{(q_n)}$. Now, $\displaystyle\bigcap_{j=0}^{l} f^{-j}(\tilde{U}_n(\tau_j))$ consists of $N(n)$ balls, each centred at $\xi_i$, $i=1,\dots,N(n)$, that are the result of intersecting $l$ nested balls at each $\xi_i$, $i=1,\dots,N(n)$. In turn, $\displaystyle\bigcap_{j=1}^{q_n}\bigcup_{k=j}^{l} f^{-k}(\tilde{U}_n^c(\tau_{k-j}))$ determines that, for each $j=1,\dots,q_n$, one must take the complementary of at least one (so exactly one) nested ball around some $\xi_i$, $i=1,\dots,N(n)$. Thus, $\tilde{A}_{n,l}^{(q_n)}$ consists of a union of at most $N(n)-1$ balls with an annulus each centred at some $\xi_i$, $i=1,\dots,N(n)$. In particular, it can be the case that $\tilde{A}_{n,l}^{(q_n)}$ is made of $N(n)$ annuli centred at each $\xi_i$, $i=1,\dots,N(n)$, which gives the biggest possible number of connected components for $\tilde{A}_{n,l}^{(q_n)}$, that is $2N(n)$ connected components. Setting $t_n=N(n)$, we conclude
\begin{equation*}
\displaystyle\lim_{n \to \infty} \norm{\mathbf{1}_{\tilde{A}_{n,l}^{(q_n)}}}_{BV}n\left(\dfrac{1}{3}\right)^{N(n)} \leq \displaystyle\lim_{n \to \infty}(4N(n)+1)n\left(\dfrac{1}{3}\right)^{N(n)}<\displaystyle\lim_{n \to \infty}2^{N(n)}n\left(\dfrac{1}{3}\right)^{N(n)}=0.
\end{equation*}

We are left to justify our claim. Without loss of generality, let $l=1$. First, we consider the case where $A_1=[0,\tau_0) \times [\tau_1,+\infty)$, which means that $\displaystyle\bigcap_{j=0}^{1} f^{-j}(\tilde{U}_n(\tau_j))$ already consists of $N(n)$ annuli. Thus, the number of connected components of $\tilde{A}_{n,l}^{(q_n)}$ can't be bigger than if $\mathcal{I}_0=(0,\tau_0)$ and $\mathcal{I}_1=(0,\tau_1)$ were to be considered. The same reasoning allows us to discard the case where $A_l=[\tau_0,+\infty) \times [\tau_1,+\infty)$, where, in fact, the number of connected components of $\tilde{A}_{n,l}^{(q_n)}$ is necessarily smaller than with $A_l=[0,\tau_0) \times [0,\tau_1)$. Finally, $A_l=[\tau_0,+\infty) \times [0,\tau_1)$ doesn't make sense.
\end{proof}

For higher dimensional expanding systems, Saussol's space will play a similar role to $BV$.
\begin{definition}
Let $\phi:\mathcal{X} \to \mathbb{R}$ be an integrable function where $\mathcal{X}$ is a compact subset of $\mathbb{R}^n$ (\textit{i.e.} $\phi \in L^1(\text{Leb})$). Given a Borel set $\Gamma \subset \mathcal{X}$, let the \textit{oscillation} of $\phi$ over $\Gamma$ be defined as
\begin{equation*}
osc(\phi,\Gamma):=\text{ess}\sup_{\Gamma} \phi-\text{ess}\inf_{\Gamma} \phi.
\end{equation*}
Given real numbers $0<\alpha \leq 1$ and $\varepsilon_0>0$, define the $\alpha$-seminorm of $\phi$ as
\begin{equation*}
\modl{\phi}_{\alpha}:=\sup_{0<\varepsilon \leq \varepsilon_0}\varepsilon^{-\alpha} \int_{\mathbb{R}^n} osc(\phi,B_{\varepsilon}(x)) d\text{Leb}(x).
\end{equation*}
The space of functions with \textit{bounded $\alpha$-seminorm} $V_\alpha:=\{\phi \in L^1(\text{Leb}):\modl{\phi}_{\alpha}<\infty\}$ equipped with the norm $\norm{\phi}_{\alpha}:=\norm{\phi}_1+\modl{\phi}_{\alpha}$ is a Banach space.
\end{definition}
\begin{remark}
The definition of the space $V_{\alpha}$ (and corresponding norm) is independent of the choice of $\varepsilon_0$.
\end{remark}
The system in Examples \ref{ex:2-exp-nper} and \ref{ex:2-exp} has exponential decay of correlations for observables in $V_{\alpha}$, for some $\alpha \in (0,1]$, against $L^1$. We conclude with checking that (1) and (2) of Lemma \ref{lem:conditions} hold for the same examples.
\begin{lemma}\label{lem:shortcut-corr-2d}
Let $f$, $\mu$, $\mathcal{M}$ and $\Psi$ be as in Example \ref{ex:2-exp}. Then
\begin{enumerate}[(1)]
\item $\displaystyle\lim_{n \to \infty} \norm{\mathbf{1}_{A_{n,l}^{(q_n)}}}_{\mathcal{C}_1}n\rho(t_n)=0$ for some sequence $(t_n)_n$ with $t_n=o(n)$;
\item $\displaystyle\lim_{n \to \infty} \norm{\mathbf{1}_{U_n(\tau)}}_{\mathcal{C}_1}\sum_{j=q_n}^{n}\rho(j)=0$.
\end{enumerate}
are satisfied.
\end{lemma}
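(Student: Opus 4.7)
The plan is to verify conditions (1) and (2) of Lemma \ref{lem:conditions} with $\mathcal{C}_1 = V_\alpha$, using the fact that $f(x,y)=(2x \mod 1, 3y \mod 1)$ is a uniformly expanding, piecewise affine toral endomorphism for which Saussol's theorem provides exponential decay of correlations for $V_\alpha$ against $L^1(\mu)$: there exist $\lambda \in (0,1)$ and $C>0$ with $\rho(n) \leq C\lambda^n$. I would choose $q_n = \floor{\log n}$, so that $q_n \to \infty$ and $q_n = o(r_n)$ for any admissible $(r_n)$, and fix $t_n$ at the end.

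For (2), I would first show that $\norm{\mathbf{1}_{U_n(\tau)}}_\alpha$ is uniformly bounded in $n$. Writing $U_n(\tau) = B_{r_1^{(n)}}(\xi_1) \cup B_{r_2^{(n)}}(\xi_2)$ with $r_i^{(n)} = h_i^{-1}(u_n(\tau)) \to 0$, the oscillation integral reduces to $\text{Leb}(N_\varepsilon(\partial B_r(\xi)))$, which equals $4\pi r \varepsilon$ for $\varepsilon \leq r$ and $\pi(r+\varepsilon)^2$ for $\varepsilon > r$. Multiplying by $\varepsilon^{-\alpha}$ and taking the supremum on $(0,\varepsilon_0]$ gives $\modl{\mathbf{1}_{B_r(\xi)}}_\alpha \leq \pi \varepsilon_0^{2-\alpha}$ uniformly for small $r$ and $\alpha < 1$. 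Since the two balls defining $U_n(\tau)$ are well separated and the $L^1$ mass $\mu(U_n(\tau))$ tends to zero, this yields $\norm{\mathbf{1}_{U_n(\tau)}}_\alpha \leq C'$ independently of $n$. Combined with $\sum_{j \geq q_n}\rho(j) = O(\lambda^{q_n}) \to 0$, condition (2) follows.

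For (1), I would control $\norm{\mathbf{1}_{A_{n,l}^{(q_n)}}}_\alpha$ through a connected-component count in the spirit of the one-dimensional argument in Lemma \ref{lem:shortcut-corr-ncont}, now in the Saussol setting. The set $A_{n,l}^{(q_n)}$ is determined by the first $m+q_n+1$ coordinates, and is a boolean combination of preimages $f^{-j}(U_n(\tau_k))$ for $0 \leq j \leq m+q_n$; each such preimage consists of $6^j$ small ellipses with semi-axes $r_k^{(n)} 2^{-j}$ and $r_k^{(n)} 3^{-j}$ (since $f$ has six inverse branches per iterate). A crude bound on the number of connected components of $A_{n,l}^{(q_n)}$ is then $C_m \cdot 6^{q_n}$, each of bounded perimeter and contained in $U_n(\tau_0)$. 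Applying the oscillation estimate component-wise gives $\norm{\mathbf{1}_{A_{n,l}^{(q_n)}}}_\alpha \leq C_m \cdot 6^{q_n} = C_m \cdot n^{\log 6}$ for the choice $q_n = \floor{\log n}$. Taking $t_n = \ceil{c\log n}$ with $c > (1+\log 6)/\modl{\log \lambda}$, we obtain $t_n = o(n)$ and $n \cdot \norm{\mathbf{1}_{A_{n,l}^{(q_n)}}}_\alpha \cdot \rho(t_n) \to 0$, which is condition (1).

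The main technical obstacle will be the precise geometric bookkeeping for the $\alpha$-seminorm of $\mathbf{1}_{A_{n,l}^{(q_n)}}$, because the two distinct expansion rates $2$ and $3$ of $f$ produce preimage-ellipses of varying eccentricities that may, in principle, cluster together as $q_n$ grows; however, the polynomial growth $6^{q_n} = n^{\log 6}$ is comfortably absorbed by the exponential decay $\lambda^{t_n}$, so the balance required for (1) is slack once the geometric estimate is in place.
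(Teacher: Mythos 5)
Your argument for (2) is essentially the paper's: both reduce $\modl{\mathbf{1}_{U_n(\tau)}}_{\alpha}$ to the area of $\varepsilon$-neighbourhoods of the boundaries of two shrinking balls and obtain a bound uniform in $n$ of order $\varepsilon_0^{2-\alpha}$ (your restriction to $\alpha<1$ is unnecessary; the case $\varepsilon\leq r$ gives $4\pi r\varepsilon^{1-\alpha}\leq 4\pi r$ even at $\alpha=1$). For (1), however, you take a genuinely different route. The paper argues that $A_{n,l}^{(q_n)}$ consists of only $\modl{\mathcal{M}}$ annuli or balls centred at the points of $\mathcal{M}$, so that $\norm{\mathbf{1}_{A_{n,l}^{(q_n)}}}_{V_\alpha}$ is \emph{uniformly bounded} in $n$, after which any $t_n$ with $n\rho(t_n)\to 0$ suffices; you instead accept a crude component count of order $6^{q_n}$, choose $q_n=\floor{\log n}$ so that this is only polynomial in $n$, and absorb the polynomial into the exponential decay by taking $t_n=\ceil{c\log n}$ with $c$ large. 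Your route costs a worse (polynomially growing) norm bound but buys robustness: it does not depend on the delicate claim that the removals $\bigcap_{j=1}^{q_n}\sigma^{-j}(A_{n,l})^c$ create no new connected components, a claim which for $q_n\to\infty$ requires care, since once $j$ is large enough that $6^{-j}$ is comparable to the square of the ball radius $h_i^{-1}(u_n(\tau))$, preimage ellipses centred away from $\xi_i$ do enter the ball and punch extra holes. Two small caveats: first, your count $C_m\cdot 6^{q_n}$ of connected components of a boolean combination of $O(6^{q_n})$ convex ellipses could in the worst case be the square of that, $O(6^{2q_n})$, but this is still polynomial in $n$ and is absorbed by the same choice of $t_n$, so the slack you point to is real; second, make sure the choice $q_n=\floor{\log n}$ is carried consistently through the rest of the verification for Example \ref{ex:2-exp} (the extremal index computation there uses $q_n=4$), noting that $q_n\to\infty$ is in any case forced on you by condition (2) of Lemma \ref{lem:conditions}, since $\sum_{j=q_n}^{n}\rho(j)$ does not vanish for constant $q_n$ while $\norm{\mathbf{1}_{U_n(\tau)}}_{V_\alpha}$ stays bounded away from $0$.
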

\begin{proof}
Here $\mathcal{C}_1=V_{\alpha}$, where $\alpha \in (0,1]$. Therefore,
\begin{equation*}
\norm{\mathbf{1}_{A_{n,l}^{(q_n)}}}_{V_{\alpha}}=\norm{\mathbf{1}_{A_{n,l}^{(q_n)}}}_1 + \modl{\mathbf{1}_{A_{n,l}^{(q_n)}}}_{\alpha}=\mu(A_{n,l}^{(q_n)}) + \modl{\mathbf{1}_{A_{n,l}^{(q_n)}}}_{\alpha}
\end{equation*}
so we want to bound the term $\modl{\mathbf{1}_{A_{n,l}^{(q_n)}}}_{\alpha}$. In words, $\modl{\mathbf{1}_{A_{n,l}^{(q_n)}}}_{\alpha}$ looks into the supremum, where $\varepsilon \in (0,\varepsilon_0]$, of the areas of $\varepsilon$-neighbourhoods of the boundary of the set $A_{n,l}^{(q_n)}$ multiplied by the factor $\varepsilon^{-\alpha}$. We observe that, in general, $A_{n,l}^{(q_n)}$ is made up of annuli or balls, more precisely of an amount of $\modl{\mathcal{M}}$ of annuli or balls centred at each one of the elements of $\mathcal{M}$. Since the outer circumference of $A_{n,l}^{(q_n)}$ has radius equal to $u_n(\tau_0)^{-2}$ (because $A_{n,l}^{(q_n)} \subset U_n(\tau_0)$, by definition) then, as long as $\varepsilon<u_n(\tau_0)^{-2}$ for all $\varepsilon \in (0,\varepsilon_0]$, any $\varepsilon$-neighbourhood of the boundary of $A_{n,l}^{(q_n)}$ can't have area bigger than $2\modl{\mathcal{M}}\varepsilon^{-\alpha}\varepsilon.2\pi u_n(\tau_0)^{-2}$ (when $A_{n,l}^{(q_n)}$ consists of annulus around each point in $\mathcal{M}$). Since $2\modl{\mathcal{M}}\varepsilon^{-\alpha}\varepsilon.2\pi u_n(\tau_0)^{-2}$ attains the supremum at $\varepsilon_0$, we have
\begin{equation*}
\modl{\mathbf{1}_{A_{n,l}^{(q_n)}}}_{\alpha} \leq 2\modl{\mathcal{M}}\varepsilon_0^{-\alpha+1}.2\pi u_n(\tau_0)^{-2}.
\end{equation*}
However, our assumption that $\varepsilon<u_n(\tau_0)^{-2}$ for all $\varepsilon \in (0,\varepsilon_0]$ does not hold if $n$ is large. As a consequence, any $\varepsilon$-neighbourhood of the boundary of $A_{n,l}^{(q_n)}$, for large enough $n$, gets to be a union of $\modl{\mathcal{M}}$ balls of radius $\varepsilon$. Thus,
\begin{equation*}
\modl{\mathbf{1}_{A_{n,l}^{(q_n)}}}_{\alpha} \leq \modl{\mathcal{M}}\varepsilon_0^{-\alpha}.\pi\varepsilon_0^2=\modl{\mathcal{M}}\pi\varepsilon_0^{2-\alpha}.
\end{equation*}
We conclude that
\begin{equation*}
\modl{\mathbf{1}_{A_{n,l}^{(q_n)}}}_{\alpha} \leq \modl{\mathcal{M}}\pi\varepsilon_0^{2-\alpha}.
\end{equation*}

As mentioned above, $A_{n,l}^{(q_n)} \subset U_n(\tau_0)$ so that $\mu(A_{n,l}^{(q_n)}) \leq \mu(U_n(\tau_0))=\pi (u_n(\tau_0)^{-2})^2$.

Finally,
\begin{equation*}
\begin{split}
\norm{\mathbf{1}_{A_{n,l}^{(q_n)}}}_{V_{\alpha}} &\leq \pi(u_n(\tau_0)^{-2})^2 +2\pi\varepsilon_0^{2-\alpha}\\
&\leq \pi(u_1(\tau_0)^{-2})^2 +2\pi\varepsilon_0^{2-\alpha}.
\end{split}
\end{equation*}
Since $\rho$ decays exponentially we can choose, for example, $t_n=\log(n)$ and (1) follows.

We obtain the exact same estimates for $\norm{\mathbf{1}_{U_n(\tau)}}_{V_{\alpha}}$ so that (2) is now trivially satisfied.
\end{proof}

\bibliography{ref_11-21}

\newcommand{\etalchar}[1]{$^{#1}$}
\begin{thebibliography}{CFF{\etalchar{+}}15}

\bibitem[AFFR16]{16correlated}
Davide Azevedo, Ana Cristina~Moreira Freitas, Jorge~Milhazes Freitas, and
  Fagner~B. Rodrigues.
\newblock Clustering of extreme events created by multiple correlated maxima.
\newblock {\em Phys. D}, 315:33--48, 2016.

\bibitem[AFFR17]{17correlated}
Davide Azevedo, Ana Cristina~Moreira Freitas, Jorge~Milhazes Freitas, and
  Fagner~B Rodrigues.
\newblock Extreme value laws for dynamical systems with countable extremal
  sets.
\newblock {\em Journal of Statistical Physics}, 167(5):1244--1261, 2017.

\bibitem[BPS18]{basrak18}
Bojan Basrak, Hrvoje Planini{\'c}, and Philippe Soulier.
\newblock An invariance principle for sums and record times of regularly
  varying stationary sequences.
\newblock {\em Probability Theory and Related Fields}, 172(3):869--914, 2018.

\bibitem[CFF{\etalchar{+}}15]{torus15}
Maria Carvalho, Ana Cristina~Moreira Freitas, Jorge~Milhazes Freitas, Mark
  Holland, and Matthew Nicol.
\newblock Extremal dichotomy for uniformly hyperbolic systems.
\newblock {\em Dynamical Systems}, 30(4):383--403, 2015.

\bibitem[CHN21]{hyp21}
Meagan Carney, Mark Holland, and Matthew Nicol.
\newblock Extremes and extremal indices for level set observables on hyperbolic
  systems.
\newblock {\em Nonlinearity}, 34(2):1136, 2021.

\bibitem[FFM20]{20conv}
Ana~Cristina Freitas, Jorge Freitas, and M{\'a}rio Magalh{\~a}es.
\newblock Complete convergence and records for dynamically generated stochastic
  processes.
\newblock {\em Transactions of the American Mathematical Society},
  373(1):435--478, 2020.

\bibitem[FFT10]{10hit}
Ana Cristina~Moreira Freitas, Jorge~Milhazes Freitas, and Mike Todd.
\newblock Hitting time statistics and extreme value theory.
\newblock {\em Probability Theory and Related Fields}, 147(3-4):675--710, 2010.

\bibitem[FFT12]{12index}
Ana Cristina~Moreira Freitas, Jorge~Milhazes Freitas, and Mike Todd.
\newblock The extremal index, hitting time statistics and periodicity.
\newblock {\em Advances in Mathematics}, 231(5):2626--2665, 2012.

\bibitem[FFT13]{13compound}
Ana Cristina~Moreira Freitas, Jorge~Milhazes Freitas, and Mike Todd.
\newblock The compound poisson limit ruling periodic extreme behaviour of
  non-uniformly hyperbolic dynamics.
\newblock {\em Communications in Mathematical Physics}, 321(2):483--527, 2013.

\bibitem[FFT20]{20enriched}
Ana Cristina~Moreira Freitas, Jorge~Milhazes Freitas, and Mike Todd.
\newblock Enriched functional limit theorems for dynamical systems.
\newblock {\em arXiv preprint arXiv:2011.10153}, 2020.

\bibitem[HV09]{hv09}
Nicolai Haydn and Sandro Vaienti.
\newblock The compound poisson distribution and return times in dynamical
  systems.
\newblock {\em Probability theory and related fields}, 144(3-4):517--542, 2009.

\bibitem[TK10]{tk10}
Marta Tyran-Kami{\'n}ska.
\newblock Weak convergence to l{\'e}vy stable processes in dynamical systems.
\newblock {\em Stochastics and Dynamics}, 10(02):263--289, 2010.

\bibitem[Whi02]{whitt}
Ward Whitt.
\newblock {\em Stochastic-process limits: an introduction to stochastic-process
  limits and their application to queues}.
\newblock Springer Science \& Business Media, 2002.

\end{thebibliography}
\bibliographystyle{alpha}

\end{document}